\newtheorem{theo}{Theorem}[section]
\newtheorem{lemm}[theo]{Lemma}
\newtheorem{prop}[theo]{Proposition}
\newtheorem{rema}[theo]{Remark}
\numberwithin{equation}{section}
\begin{document}

\title[adaptive PML for elastic waves]{An adaptive finite element PML method for
the elastic wave scattering problem in periodic structures}

\author{Xue Jiang}
\address{School of Science, Beijing University of Posts and
Telecommunications, Beijing 100876, China.}
\email{jxue@lsec.cc.ac.cn}

\author{Peijun Li}
\address{Department of Mathematics, Purdue University, West Lafayette, IN 47907,
USA.}
\email{lipeijun@math.purdue.edu}

\author{Junliang Lv}
\address{School of Mathematics, Jilin University, Changchun 130012, China. }
\email{lvjl@jlu.edu.cn}

\author{Weiying Zheng}
\address{CMIS, LSEC, ICMSEC, Academy of Mathematics and System Sciences, Chinese
Academy of Sciences, Beijing, 100190, China.}
\email{zwy@lsec.cc.ac.cn}

\thanks{The research of XJ was supported in part by China NSF grant 11401040 and
by the Fundamental Research Funds for the Central Universities
24820152015RC17. The research of PL was supported in part by the NSF grant
DMS-1151308. The research of JL was partially supported by the China NSF grants
11126040 and 11301214. The research of WZ was supported in part by China NSF
grants 11171334 and 91430215, by the Funds for Creative Research Groups of China
(Grant No. 11021101), and by National 863 Project of China under the
grant 2012AA01A309.}

\subjclass[2010]{65N30, 78A45, 35Q60}

\keywords{Elastic wave equation, adaptive finite element, perfectly matched
layer, a posteriori error estimate}

\begin{abstract}
An adaptive finite element method is presented for the elastic scattering of a
time-harmonic plane wave by a periodic surface. First, the unbounded physical
domain is truncated into a bounded computational domain by introducing
the perfectly matched layer (PML) technique. The well-posedness and exponential
convergence of the solution are established for the truncated PML problem by
developing an equivalent transparent boundary condition. Second, an a posteriori
error estimate is deduced for the discrete problem and is used to determine the
finite elements for refinements and to determine the PML parameters. Numerical
experiments are included to demonstrate the competitive behavior of the proposed
adaptive method.
\end{abstract}

\maketitle

\section{Introduction}

The scattering theory in periodic diffractive structures, which are known as
diffraction gratings, has many significant applications in optical industry
\cite{bcm-01, bdc-josa95}. The time-harmonic problems have been
studied extensively in diffraction gratings by many researchers for acoustic,
electromagnetic, and elastic waves \cite{a-jiea99, a-mmas99, b-sjna95, b-sjam97,
cf-tams91, df-jmaa92, eh-mmas10, eh-mmmas12, lwz-ip15, wbllw-sjna15}. The
underlying equations of these waves are the Helmholtz equation, the Maxwell
equations, and the Navier equation, respectively. This paper is concerned with
the numerical solution of the elastic wave scattering problem in such a periodic
structure. The problem has two fundamental challenges. The first one is to
truncate the unbounded physical domain into a bounded computational domain. The
second one is the singularity of the solution due to nonsmooth grating surfaces.
Hence, the goal of this work is two fold to overcome these two issues. First, we
adopt the perfectly matched layer (PML) technique to handle the domain
truncation. Second, we use an a posteriori error analysis and design a finite
element method with adaptive mesh refinements to deal with the singularity of
the solution.

The research on the PML technique has undergone a tremendous development since
B\'{e}renger proposed a PML for solving the time-dependent Maxwell equations
\cite{b-jcp94}. The basis idea of the PML technique is to surround the domain of
interest by a layer of finite thickness fictitious material which absorbs all
the waves coming from inside the computational domain. When the waves
reach the outer boundary of the PML region, their energies are so small that
the simple homogeneous Dirichlet boundary conditions can be imposed. Various
constructions of PML absorbing layers have been proposed and investigated
for the acoustic and electromagnetic wave scattering problems \cite{bw-sjna05,
bp-mc07, cm-sjsc98, ct-g01, cw-motl94, hsz-sjma03, ls-c98, ty-anm98}. The PML
technique is much less studied for the elastic wave scattering problems
\cite{hsb-jasa96}, especically for the rigorous convergence analysis. We refer
to \cite{bpt-mc10, cxz-mc} for recent study on convergence analysis of
the elastic obstacle scattering problem. Combined with the PML technique, an
effective adaptive finite element method was proposed in \cite{bcw-josa05,
cw-sjna03} to solve the two-dimensional diffraction grating problem where the
one-dimensional grating structure was considered. Due to the competitive
numerical performance, the method was quickly adopted to solve many other
scattering problems including the obstacle scattering problems \cite{cl-sjna05,
cc-mc08} and the three-dimensional diffraction grating problem \cite{blw-mc10}.
Based on the a posteriori error analysis, the adaptive finite element PML method
provides an effective numerical strategy which can be used to solve a variety of
wave propagation problems which are imposed in unbounded domains.

In this paper, we explore the possibility of applying such an adaptive finite
element PML method to solve the diffraction grating problem of elastic
waves. Specifically, we consider the incidence of a time-harmonic elastic plane
wave on a one-dimensional grating surface, which is assumed to be elastically
rigid. The open space, which is above the surface, is assumed to be filled with
a homogeneous and isotropic elastic medium. Using the quasi-periodicity of the
solution and the transparent boundary condition, we formulate the scattering
problem equivalently into a boundary value problem in a bounded domain. The
conservation of energy is proved for the model problem and is used to verify
our numerical results when the exact solutions are not available. Following the
complex coordinate stretching, we study the truncated PML problem which is an
approximation to the original scattering problem. We develop the transparent
boundary condition for the truncated PML problem and show that it has a unique
weak solution which converges exponentially to the solution of the original
scattering problem. Moreover, an a posteriori error estimate is deduced for the
discrete PML problem. It consists of the finite element error and the PML
modeling error. The estimate is used to design the adaptive finite element
algorithm to choose elements for refinements and to determine the PML
parameters. Numerical experiments show that the proposed method can effectively
overcome the aforementioned two challenges.

This paper presents a nontrivial application of the adaptive finite element
PML method for the grating problem from the Helmholtz (acoustic) and Maxwell
(electromagnetic) equations to the Navier (elastic) equation. The elastic wave
equation is complicated due to the coexistence of compressional and shear waves
that have different wavenumbers and propagate at different speeds. In view of
this physical feature, we introduce two scalar potential functions to split the
wave field into its compressional and shear parts via the Helmholtz
decomposition. As a consequence, the analysis is much more sophisticated than
that for the Helmholtz equation or the Maxwell equations. We believe that
this work not only enriches the range of applications for the PML technique but
also is a valued contribution to the family of numerical methods for solving
elastic wave scattering problems.

The paper is organized as follows. In section 2, we introduce the model problem
of the elastic wave scattering by a periodic surface and formulate it into a
boundary value problem by using a transparent boundary condition. The
conservation of the total energy is proved for the propagating wave modes. In
section 3, we introduce the PML formulation and prove the well-posedness and
convergence of the truncated PML problem. Section 4 is devoted to the finite
element approximation and the a posteriori error estimate. In section 5, we
discuss the numerical implementation of our adaptive algorithm and present some
numerical experiments to illustrate the performance of the proposed method. The
paper is concluded with some general remarks and directions for future research
in section 6.

\section{Problem formulation}

In this section, we introduce the model problem and present an exact transparent
boundary condition to reduce the problem into a boundary value problem in a
bounded domain. The energy distribution will be studied for the reflected
propagating waves of the scattering problem.

\subsection{Navier equation}

Consider the elastic scattering of a time-harmonic plane wave by a periodic
surface $S$ which is assumed to be Lipschitz continuous and elastically rigid.
In this work, we consider the two-dimensional problem by assuming that the
surface is invariant in the $z$ direction. The three-dimensional problem will
be studied as a separate work. Figure \ref{pg} shows the problem geometry in one
period. Let $\boldsymbol{x}=[x, y]^\top\in\mathbb{R}^2$. Denote
by $\Gamma=\{\boldsymbol{x}\in\mathbb{R}^2: 0<x<\Lambda,\, y=b\}$ the
artificial boundary above the scattering surface, where $\Lambda$ is the period
and $b$ is a constant. Let $\Omega$ be the bounded domain which is enclosed from
below and above by $S$ and $\Gamma$, respectively. Finally, denote by
$\Omega^{\rm e}=\{\boldsymbol{x}\in\mathbb{R}^2: 0<x<\Lambda,\, y>b\}$ the
exterior domain to $\Omega$.

The open space, which is above the grating surface, is assumed to be filled with
a homogeneous and isotropic elastic medium with a unit mass density.
The propagation of a time-harmonic elastic wave is governed by the Navier
equation
\begin{equation}\label{une}
 \mu\Delta\boldsymbol{u}+(\lambda+\mu)\nabla\nabla\cdot\boldsymbol{u}
+\omega^2\boldsymbol{u}=0\quad\text{in} ~ \Omega\cup\Omega^{\rm e},
\end{equation}
where $\omega>0$ is the angular frequency, $\mu$ and $\lambda$ are the Lam\'{e}
constants satisfying $\mu>0$ and $\lambda+\mu>0$,  and $\boldsymbol{u}=[u_1,
u_2]^\top$ is the displacement vector of the total field which satisfies
\begin{equation}\label{bc}
 \boldsymbol{u}=0\quad\text{on} ~ S.
\end{equation}

\begin{figure}
\centering
\includegraphics[width=0.32\textwidth]{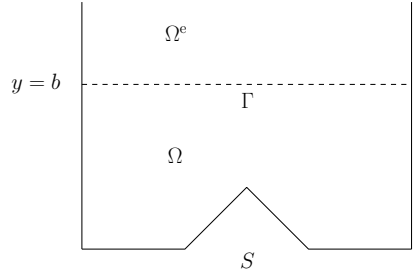}
\caption{Geometry of the scattering problem.}
\label{pg}
\end{figure}

Let the surface be illuminated from above by either a time-harmonic
compressional plane wave
\[
 \boldsymbol{u}_{\rm inc}(\boldsymbol x)=[\sin\theta,\,-\cos\theta]^\top
e^{{\rm i}\kappa_1(x\sin\theta - y\cos\theta)},
\]
or a time-harmonic shear plane wave
\[
 \boldsymbol{u}_{\rm inc}(\boldsymbol x)=[\cos\theta,\,\sin\theta]^\top e^{{\rm
i}\kappa_2 (x\sin\theta-y\cos\theta)},
\]
where $\theta\in(-\pi/2, \pi/2)$ is the incident angle and
\begin{equation}\label{wn}
 \kappa_1=\frac{\omega}{\sqrt{\lambda+2\mu}},\quad
\kappa_2=\frac{\omega}{\sqrt{\mu}}
\end{equation}
are the compressional and shear wavenumbers, respectively. It can be verified
that the incident wave also satisfies the Navier equation:
\begin{equation}\label{uine}
\mu\Delta\boldsymbol{u}_{\rm inc}+(\lambda
+\mu)\nabla\nabla\cdot\boldsymbol{u}_{\rm inc}
+\omega^2\boldsymbol{u}_{\rm inc}=0\quad\text{in} ~ \Omega\cup\Omega^{\rm e}.
\end{equation}

\begin{rema}
Our method works for either the compressional plane incident wave, or
the shear plane incident wave, or any linear combination of these two plane
incident waves. For clarity, we will take the compressional plane incident wave
as an example to present the results in our subsequent analysis.
\end{rema}

Motivated by uniqueness, we are interested in a quasi-periodic solution of
$\boldsymbol{u}$, i.e., $\boldsymbol{u}(x, y)e^{-{\rm i}\alpha x}$ is periodic
in $x$ with period $\Lambda$ where $\alpha=\kappa_1\sin\theta$. In addition, the
following radiation condition is imposed: the total displacement
$\boldsymbol{u}$ consists of bounded outgoing waves plus the incident wave
$\boldsymbol{u}_{\rm inc}$ in $\Omega^{\rm e}$.

We introduce some notation and Sobolev spaces. Let $\boldsymbol{u}=[u_1,
u_2]^\top$ and $u$ be a vector and scalar function, respectively. Define the
Jacobian matrix of $\boldsymbol{u}$ as
\[
 \nabla\boldsymbol{u}=\begin{bmatrix}
                       \partial_x u_1 & \partial_y u_1\\
                       \partial_x u_2 & \partial_y u_2
                      \end{bmatrix}
\]
and two curl operators
\[
 {\rm curl}\boldsymbol{u}=\partial_x u_2 - \partial_y u_1,\quad {\bf
curl} u=[\partial_y u, -\partial_x u]^\top.
\]
Define a quasi-periodic functional space
\[
 H^1_{S, \rm qp}(\Omega)=\{u\in H^1(\Omega): u(\Lambda, y)=u(0, y)e^{{\rm
i}\alpha\Lambda},\, u=0 ~\text{on}~S\},
\]
which is a subspace of $H^1(\Omega)$ with the norm $\|\cdot\|_{H^1(\Omega)}$.
For any quasi-periodic function $u$ defined on $\Gamma$, it admits the Fourier
series expansion
\[
 u(x)=\sum_{n\in\mathbb{Z}}u^{(n)}e^{{\rm i}\alpha_n x}, \quad
u^{(n)}=\frac{1}{\Lambda}\int_0^\Lambda u(x)e^{-{\rm i}\alpha_n x}{\rm
d}x,\quad\alpha_n=\alpha+n\left(\frac{2\pi}{\Lambda}\right).
\]
We define a trace functional space $H^s(\Gamma)$ with the norm
given by
\[
 \|u\|_{H^s(\Gamma)}=\Bigl(\Lambda \sum_{n\in\mathbb{Z}}(1+\alpha_n^2)^s
|u^{(n)}|^2\Bigr)^{1/2}.
\]
Let $H^1_{S, \rm qp}(\Omega)^2$ and $H^s(\Gamma)^2$ be the Cartesian product
spaces equipped with the corresponding 2-norms of $H^1_{S, \rm qp}(\Omega)$ and
$H^s(\Gamma)$, respectively. It is known that $H^{-s}(\Gamma)^2$ is the
dual space of $H^s(\Gamma)^2$ with respect to the $L^2(\Gamma)^2$ inner
product
\[
 \langle \boldsymbol{u},
\boldsymbol{v}\rangle_{\Gamma}=\int_{\Gamma}\boldsymbol{u}\cdot
\bar{\boldsymbol{v}}\,{\rm d}x,
\]
where the bar denotes the complex conjuate.

\subsection{Boundary value problem}

We wish to reduce the problem equivalently into a boundary value problem in
$\Omega$ by introducing an exact transparent boundary condition on $\Gamma$.

The total field $\boldsymbol{u}$ consists of the incident field
$\boldsymbol{u}_{\rm inc}$ and the diffracted field $\boldsymbol{v}$, i.e.,
\begin{equation}\label{tf}
 \boldsymbol{u}=\boldsymbol{u}_{\rm inc}+\boldsymbol{v}.
\end{equation}
Noting \eqref{tf} and subtracting \eqref{uine} from \eqref{une}, we obtain the
Navier equation for the diffracted field
$\boldsymbol{v}$:
\begin{equation}\label{vne}
\mu\Delta\boldsymbol{v}+(\lambda +\mu)\nabla\nabla\cdot\boldsymbol{v}
+\omega^2\boldsymbol{v}=0\quad\text{in} ~ \Omega^{\rm e}.
\end{equation}
For any solution $\boldsymbol{v}$ of \eqref{vne}, we introduce the Helmholtz
decomposition to split it into the compressional and shear parts:
\begin{equation}\label{hdv}
 \boldsymbol{v}=\nabla\phi_1 +{\bf curl}\phi_2,
\end{equation}
where $\phi_1$ and $\phi_2$ are scalar potential functions. Substituting
\eqref{hdv} into \eqref{vne} gives
\[
 \nabla\left((\lambda +2\mu)\Delta\phi_1 +\omega^2\phi_1 \right)+{\bf
curl}(\mu\Delta\phi_2 +\omega^2\phi_2)=0,
\]
which is fulfilled if $\phi_j$ satisfy the Helmholtz equation
\begin{equation}\label{he}
 \Delta\phi_j +\kappa_j^2\phi_j=0,
\end{equation}
where $\kappa_j$ is the wavenumber defined in \eqref{wn}.

Since $\boldsymbol{v}$ is a quasi-periodic function, we have from \eqref{hdv}
that $\phi_j$ is also a quasi-periodic function in the $x$ direction with
period $\Lambda$ and it has the Fourier series expansion
\begin{equation}\label{fse}
 \phi_j(x, y)=\sum_{n\in\mathbb{Z}}\phi_j^{(n)}(y)e^{{\rm i}\alpha_n x}.
\end{equation}
Plugging \eqref{fse} into \eqref{he} yields
\begin{equation}\label{ode}
 \frac{{\rm d}^2\phi_j^{(n)}(y)}{{\rm d}y^2}+\bigl(\beta_j^{(n)}\bigr)^2
\phi_j^{(n)}(y)=0, \quad y>b,
\end{equation}
where
\begin{equation}\label{beta}
\beta^{(n)}_j=
\begin{cases}
(\kappa_j^2 - \alpha_n^2)^{1/2},\quad &|\alpha_n|<\kappa_j,\\[2pt]
{\rm i}(\alpha_n^2 - \kappa_j^2)^{1/2},\quad &|\alpha_n|>\kappa_j.
\end{cases}
\end{equation}
Note that $\beta_1^{(0)}=\beta=\kappa_1\cos\theta$. We assume that $\kappa_j\neq
|\alpha_n|$ for all $n\in\mathbb{Z}$ to exclude possible resonance. Noting
\eqref{beta} and using the bounded outgoing radiation condition, we obtain the
solution of \eqref{ode}:
\[
 \phi_j^{(n)}(y)=\phi_j^{(n)}(b) e^{{\rm i}\beta_j^{(n)}(y-b)},
\]
which gives Rayleigh's expansion for $\phi_j$:
\begin{equation}\label{pfre}
 \phi_j(x, y)=\sum_{n\in\mathbb{Z}}\phi_j^{(n)}(b) e^{{\rm i}\bigl(\alpha_n
x+\beta_j^{(n)}(y-b)\bigr)},\quad y>b.
\end{equation}
Combining \eqref{pfre} and the Helmholtz decomposition \eqref{hdv} yields
\begin{equation}\label{vre}
 \boldsymbol{v}(x, y)={\rm i}\sum_{n\in\mathbb{Z}}
 \begin{bmatrix}
  \alpha_n\\[5pt]
  \beta_1^{(n)}
 \end{bmatrix}
\phi_1^{(n)}(b) e^{{\rm i}\bigl(\alpha_n x
+\beta_1^{(n)}(y-b)\bigr)} +\begin{bmatrix}
  \beta_2^{(n)}\\[5pt]
  -\alpha_n
 \end{bmatrix}
\phi_2^{(n)}(b) e^{{\rm i}\bigl(\alpha_n x +\beta_2^{(n)}(y-b)\bigr)}.
\end{equation}

On the other hand, as a quasi-periodic function, the diffracted field
$\boldsymbol{v}$ also has the Fourier series expansion
\begin{equation}\label{vfe}
 \boldsymbol{v}(x, b)=\sum_{n\in\mathbb{Z}}\boldsymbol{v}^{(n)}(b) e^{{\rm
i}\alpha_n x}.
\end{equation}
From \eqref{vfe} and \eqref{vre}, we obtain a linear
system of algebraic equations for $\phi_j^{(n)}(b)$:
\[
 \begin{bmatrix}
  {\rm i}\alpha_n & {\rm i}\beta_2^{(n)}\\[5pt]
  {\rm i}\beta_1^{(n)} & -{\rm i}\alpha_n
 \end{bmatrix}
 \begin{bmatrix}
  \phi_1^{(n)}(b)\\[5pt]
  \phi_2^{(n)}(b)
 \end{bmatrix}=
 \begin{bmatrix}
  v_1^{(n)}(b)\\[5pt]
  v_2^{(n)}(b)
 \end{bmatrix}.
 \]
Solving the above equations via Cramer's rule gives
\begin{subequations}\label{pffc}
\begin{align}
\phi_1^{(n)}(b)&=-\frac{\rm i}{\chi^{(n)}}\bigl(\alpha_n
v_1^{(n)}(b)+\beta_2^{(n)} v_2^{(n)}(b)\bigr),\\
\phi_2^{(n)}(b)&=-\frac{\rm i}{\chi^{(n)}}\bigl(\beta_1^{(n)}
v_1^{(n)}(b)-\alpha_n v_2^{(n)}(b)\bigr),
\end{align}
\end{subequations}
where
\begin{equation}\label{chi}
 \chi^{(n)}=\alpha_n^2 + \beta_1^{(n)}\beta_2^{(n)}.
\end{equation}
Plugging \eqref{pffc} into \eqref{vre}, we obtain Rayleigh's expansion for the
diffracted field $\boldsymbol{v}$ in $\Omega^{\rm e}$:
\begin{align}\label{vr}
 \boldsymbol{v}(x, y)=\sum_{n\in\mathbb{Z}} \frac{1}{\chi^{(n)}}
&\begin{bmatrix}
\alpha_n^2 &  \alpha_n\beta_2^{(n)}\\[5pt]
\alpha_n \beta_1^{(n)} & \beta_1^{(n)}\beta_2^{(n)}
\end{bmatrix}
\boldsymbol{v}^{(n)}(b)e^{{\rm i}\bigl(\alpha_n
x+\beta_1^{(n)}(y-b)\bigr)}\notag\\
&+\frac{1}{\chi^{(n)}}
\begin{bmatrix}
\beta_1^{(n)}\beta_2^{(n)} & - \alpha_n\beta_2^{(n)}\\[5pt]
-\alpha_n \beta_1^{(n)} & \alpha_n^2
\end{bmatrix}
\boldsymbol{v}^{(n)}(b)e^{{\rm i}\bigl(\alpha_n
x+\beta_2^{(n)}(y-b)\bigr)}.
\end{align}

Given a vector field $\boldsymbol{v}=[v_1, v_2]^\top$, we define a differential
operator on $\Gamma$:
\begin{equation}\label{do}
 \mathscr{D}\boldsymbol{v}=\mu\partial_y\boldsymbol{v}+(\lambda +\mu)[0,
1]^\top\nabla\cdot\boldsymbol{v}=[\mu\partial_y v_1, (\lambda
+\mu)\partial_x v_1+(\lambda +2\mu)\partial_y v_2]^\top.
\end{equation}
By \eqref{do}, and \eqref{vr}, we deduce the transparent
boundary condition
\[
\mathscr{D}\boldsymbol{v}=\mathscr{T}\boldsymbol{v}:=\sum_{n\in\mathbb{Z}}M^{(n)
} \boldsymbol{v}^{(n)}(b)e^{{\rm i}\alpha_n x}\quad\text{on} ~ \Gamma,
\]
where the matrix
\[
 M^{(n)}=\frac{\rm i}{\chi^{(n)}}
 \begin{bmatrix}
\omega^2 \beta_1^{(n)} & \mu\alpha_n \chi^{(n)} -\omega^2\alpha_n\\[5pt]
\omega^2\alpha_n-\mu\alpha_n \chi^{(n)}& \omega^2\beta_2^{(n)}
 \end{bmatrix}.
\]
Equivalently, we have the transparent boundary condition for the total
field $\boldsymbol{u}$:
\[
 \mathscr{D}\boldsymbol{u}=\mathscr{T}\boldsymbol{u}+\boldsymbol{f}\quad\text{on
} ~ \Gamma,
\]
where $\boldsymbol{f}=\mathscr{D}\boldsymbol{u}_{\rm
inc}-\mathscr{T}\boldsymbol{u}_{\rm inc}$.

The scattering problem can be reduced to the following boundary value problem:
\begin{equation}\label{bvp}
 \begin{cases}
  \mu\Delta\boldsymbol{u}+(\lambda+\mu)\nabla
\nabla\cdot\boldsymbol{u}+\omega^2\boldsymbol{u}=0\quad&\text{in}~ \Omega,\\
\boldsymbol{u}=0\quad&\text{on} ~ S,\\
\mathscr{D}\boldsymbol{u}=\mathscr{T}\boldsymbol{u}+\boldsymbol{f}\quad&\text{on
} ~ \Gamma.
 \end{cases}
\end{equation}
The weak formulation of \eqref{bvp} reads as follows: Find $\boldsymbol{u}\in
H^1_{S, \rm qp}(\Omega)^2$ such that
\begin{equation}\label{wp}
 a(\boldsymbol{u}, \boldsymbol{v})=\langle\boldsymbol{f},
\boldsymbol{v}\rangle_{\Gamma},\quad\forall\,\boldsymbol{v}\in H^1_{S, \rm
qp}(\Omega)^2,
\end{equation}
where the sesquilinear form $a: H^1_{S, \rm qp}(\Omega)^2\times H^1_{S, \rm
qp}(\Omega)^2\to\mathbb{C}$ is defined by
\begin{align}\label{sf}
 a(\boldsymbol{u}, \boldsymbol{v})=\mu\int_\Omega
\nabla\boldsymbol{u}:\nabla\bar{\boldsymbol
v}\,{\rm d}\boldsymbol{x}+(\lambda+\mu)\int_\Omega (\nabla\cdot\boldsymbol{u}
)(\nabla\cdot\bar{\boldsymbol v})\,{\rm d}\boldsymbol{x}\notag\\
-\omega^2\int_\Omega \boldsymbol{u}\cdot\bar{\boldsymbol v}\, {\rm
d}\boldsymbol{x}- \langle T\boldsymbol{u}, \boldsymbol{v}\rangle_{\Gamma}.
\end{align}
Here $A:B={\rm tr}(A B^\top)$ is the Frobenius inner product of square matrices
$A$ and $B$.

The existence of a unique weak solution $\boldsymbol{u}$ of \eqref{wp} is
discussed in \cite{eh-mmas10}. In this paper, we assume that the variational
problem \eqref{wp} admits a unique solution. It follows from the general theory
in \cite{ba-73} that there exists a constant $\gamma_1>0$ such that the
following inf-sup condition holds
\begin{equation}\label{ifc}
\sup_{0\neq \boldsymbol{v}\in H^1_{S, \rm qp}(\Omega)^2}\frac{|a(\boldsymbol{u},
\boldsymbol{v})|}{\|\boldsymbol{v}\|_{H^1(\Omega)^2}}\geq \gamma_1
\|\boldsymbol{u}\|_{H^1(\Omega)^2},\quad\forall\,\boldsymbol{u}\in H^1_{S, \rm
qp}(\Omega)^2.
\end{equation}

\subsection{Energy distribution}

We study the energy distribution for the propagating reflected wave modes of
the displacement. The result will be used to verify the accuracy of our
numerical method when the analytic solution is not available.

Denote by $\boldsymbol{\nu}=(\nu_1, \nu_2)^\top$ and $\boldsymbol{\tau}=(\tau_1,
\tau_2)^\top$ the unit normal and tangential vectors on $S$, where
$\tau_1=\nu_2$ and $\tau_2=-\nu_1$. Let
$\Delta_j^{(n)}=|\kappa_j^2-\alpha_n^2|^{1/2}$ and $U_j=\{n:
|\alpha_n|<\kappa_j\}$. We point out that $U_1$ and $U_2$ are the
collections of all the propagating modes for the compressional and shear waves,
respectively. It is clear to note that $\beta_j^{(n)}=\Delta_j^{(n)}$ for $n\in
U_j$ and $\beta_j^{(n)}={\rm i}\Delta_j^{(n)}$ for $n\notin U_j$.

Consider the Helmholtz decomposition for the total field:
\begin{equation}\label{hdt}
 \boldsymbol{u}=\nabla\varphi_1+{\bf curl}\varphi_2.
\end{equation}
Substituting \eqref{hdt} into \eqref{une}, we may verify that $\varphi_j$ also
satisfies the Helmholtz equation
\[
 \Delta \varphi_j+\kappa_j^2\varphi_j=0\quad\text{in} ~ \Omega\cup\Omega^{\rm
e}.
\]
Using the boundary condition \eqref{bc}, we have
\[
 \partial_{\boldsymbol\nu}\varphi_1-\partial_{\boldsymbol\tau}
\varphi_2=0\quad\text{and}\quad \partial_{\boldsymbol\nu}\varphi_2+\partial_{
\boldsymbol\tau}\varphi_1=0 \quad\text{on} ~ S.
\]

Correspondingly, we introduce the Helmholtz decomposition for the incident
field:
\[
 \boldsymbol{u}_{\rm inc}=\nabla\psi_1 + {\bf curl}\psi_2,
\]
which gives explicitly that
\[
 \psi_1=-\frac{1}{\kappa_1^2}\nabla\cdot\boldsymbol{u}_{\rm inc}=-\frac{\rm
i}{\kappa_1}e^{{\rm i}(\alpha x-\beta y)},\quad\psi_2=\frac{1}{\kappa_2^2}{\rm
curl}\boldsymbol{u}_{\rm inc}=0.
\]
Hence we have
\[
 \varphi_1=\phi_1+\psi_1, \quad \varphi_2=\phi_2.
\]
Using the Rayleigh expansions \eqref{pfre}, we get
\begin{align}
\label{vphi1} \varphi_1(x, y)&=r_0 e^{{\rm i}(\alpha x-\beta
y)} +\sum_{n\in\mathbb{Z}}r_1^{(n)} e^{{\rm i}\bigl(\alpha_n
x+\beta_1^{(n)}y\bigr)},\\
\label{vphi2}\varphi_2(x, y)&=\sum_{n\in\mathbb{Z}}r_2^{(n)} e^{{\rm
i}\bigl(\alpha_n
x+\beta_2^{(n)}y\bigr)},
\end{align}
where
\[
r_0=-\frac{\rm i}{\kappa_1}, \quad r_1^{(n)}=\phi_1^{(n)}(b)e^{-{\rm
i}\beta_1^{(n)} b},\quad r_2^{(n)}=\phi_2^{(n)}(b)e^{-{\rm i}\beta_2^{(n)}b}.
\]

The grating efficiency is defined by
\begin{equation}\label{ge}
 e_1^{(n)}=\frac{\beta_1^{(n)}|r_1^{(n)}|^2}{\beta|r_0|^2},\quad
e_2^{(n)}=\frac{\beta_2^{(n)}|r_2^{(n)}|^2}{\beta|r_0|^2},
\end{equation}
where $e_1^{(n)}$ and $e_2^{(n)}$ are the efficiency of the $n$-th
order reflected modes for the compressional wave and the shear wave,
respectively. We have the following conservation of energy.

\begin{theo}\label{ce}
The total energy is conserved, i.e.,
\[
\sum_{n\in U_1}e_1^{(n)}+\sum_{n\in U_2}e_2^{(n)}=1.
\]
\end{theo}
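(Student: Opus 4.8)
The plan is to derive the identity from a Green-identity (energy-balance) argument applied to the scalar Helmholtz potentials $\varphi_1,\varphi_2$ in the bounded domain $\Omega$, rather than to the vector field $\boldsymbol u$ directly. Since each $\varphi_j$ solves $\Delta\varphi_j+\kappa_j^2\varphi_j=0$ in $\Omega$ with $\kappa_j$ real, multiplying by $\bar\varphi_j$, integrating over $\Omega$, integrating by parts, and taking the imaginary part annihilates the real volume terms $\int_\Omega|\nabla\varphi_j|^2$ and $\kappa_j^2\int_\Omega|\varphi_j|^2$, leaving
\[
{\rm Im}\int_{\partial\Omega}(\partial_{\boldsymbol\nu}\varphi_j)\,\bar\varphi_j\,{\rm d}s=0,\qquad j=1,2 .
\]
I would then dispose of the parts of $\partial\Omega$ other than $\Gamma$. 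On the two vertical sides $x=0$ and $x=\Lambda$, quasi-periodicity forces $\varphi_j$ and $\partial_x\varphi_j$ to carry the common factor $e^{{\rm i}\alpha\Lambda}$, so $(\partial_x\varphi_j)\bar\varphi_j$ is $\Lambda$-periodic; since the outward normals on the two sides are opposite, these contributions cancel.

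For the part over $S$ I would add the two identities and insert the coupled boundary conditions $\partial_{\boldsymbol\nu}\varphi_1=\partial_{\boldsymbol\tau}\varphi_2$, $\partial_{\boldsymbol\nu}\varphi_2=-\partial_{\boldsymbol\tau}\varphi_1$; the $S$-boundary term then equals $\int_S(\partial_{\boldsymbol\tau}\varphi_2)\bar\varphi_1\,{\rm d}s-\int_S(\partial_{\boldsymbol\tau}\varphi_1)\bar\varphi_2\,{\rm d}s$. A tangential integration by parts on the second integral — whose endpoint contributions vanish because $\varphi_1\bar\varphi_2$ is $\Lambda$-periodic along $S$ — turns this into $\int_S\bigl[(\partial_{\boldsymbol\tau}\varphi_2)\bar\varphi_1+\varphi_1(\partial_{\boldsymbol\tau}\bar\varphi_2)\bigr]{\rm d}s$, an expression of the form $\int_S 2\,{\rm Re}\bigl(\varphi_1\,\partial_{\boldsymbol\tau}\bar\varphi_2\bigr){\rm d}s$ and hence real, so that taking imaginary parts removes it. (Alternatively one can run the vector Green identity for $\boldsymbol u$ and the co-normal operator $\mathscr D$, in which case the $S$-term is trivially zero because $\boldsymbol u=0$ on $S$, at the price of a heavier computation on $\Gamma$ based on \eqref{vr}.) What survives is
\[
{\rm Im}\int_\Gamma\bigl[(\partial_y\varphi_1)\bar\varphi_1+(\partial_y\varphi_2)\bar\varphi_2\bigr]{\rm d}x=0 .
\]

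It then remains to evaluate this integral on $\Gamma=\{y=b\}$ from the Rayleigh expansions \eqref{vphi1}--\eqref{vphi2}, using $\int_0^\Lambda e^{{\rm i}(\alpha_m-\alpha_n)x}\,{\rm d}x=\Lambda\delta_{mn}$ to reduce it to a sum over Fourier modes. A mode with $n\in U_j$ has $\beta_j^{(n)}=\Delta_j^{(n)}\in\mathbb R$ and contributes ${\rm i}\,\beta_j^{(n)}|r_j^{(n)}|^2$, whose imaginary part is $\beta_j^{(n)}|r_j^{(n)}|^2$; a mode with $n\notin U_j$ has $\beta_j^{(n)}={\rm i}\Delta_j^{(n)}$, its contribution to the integral is real and drops out. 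The only point deserving attention is the $n=0$ mode of $\varphi_1$, where the incident coefficient $r_0$ and the specular coefficient $r_1^{(0)}$ interfere: the mixed products occur in conjugate pairs $z+\bar z$ and are therefore real, while the diagonal part gives $\beta\bigl(|r_1^{(0)}|^2-|r_0|^2\bigr)$. Assembling all terms yields
\[
-\beta|r_0|^2+\sum_{n\in U_1}\beta_1^{(n)}|r_1^{(n)}|^2+\sum_{n\in U_2}\beta_2^{(n)}|r_2^{(n)}|^2=0 ,
\]
and dividing by $\beta|r_0|^2\ (\ne 0)$ and recalling \eqref{ge} gives $\sum_{n\in U_1}e_1^{(n)}+\sum_{n\in U_2}e_2^{(n)}=1$.

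The step I expect to be the main obstacle is the treatment of the boundary term on $S$: it is the only place where the shape of the grating (Lipschitz, non-flat, possibly with corners) enters, and it requires both the coupled Neumann-type relations for $\varphi_1,\varphi_2$ and a tangential integration by parts, together with a justification that the Green identities and these manipulations are legitimate at the available regularity — the weak solution and hence its potentials lie only in $H^1$, and the identities hold on Lipschitz domains by density of smooth functions. A secondary, purely bookkeeping, difficulty is tracking the incident/specular cross terms in the $n=0$ mode and justifying the interchange of the Fourier summation with the integration over $\Gamma$, which is harmless since only finitely many modes propagate and the evanescent ones decay exponentially in $b$.
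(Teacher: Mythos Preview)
Your proposal is correct and follows essentially the same route as the paper: Green's identity for the scalar potentials $\varphi_1,\varphi_2$, cancellation of the lateral contributions by quasi-periodicity, elimination of the $S$-contribution via the coupled boundary conditions together with a tangential integration by parts, and mode-by-mode evaluation of the surviving $\Gamma$-integral from the Rayleigh expansions. The only cosmetic difference is that the paper works with the antisymmetrized combination $\bar\varphi_j\Delta\varphi_j-\varphi_j\Delta\bar\varphi_j$, which makes the $S$-terms cancel identically (not merely have zero imaginary part), whereas you take imaginary parts first; the two are equivalent.
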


\begin{proof}
Consider the following coupled problem:
\begin{equation}\label{ce-s1}
\begin{cases}
 \Delta\varphi_j + \kappa_j^2\varphi_j=0 &\quad\text{in} ~ \Omega,\\
 \partial_{\boldsymbol\nu}\varphi_1-\partial_{\boldsymbol\tau}
\varphi_2=0&\quad\text{on} ~ S,\\
\partial_{\boldsymbol\nu}\varphi_2+\partial_{\boldsymbol\tau}\varphi_1=0
&\quad\text{on} ~ S.
 \end{cases}
\end{equation}
It is clear to note that $\bar{\varphi}_j$ also satisfies the problem
\eqref{ce-s1} since the wavenumber $\kappa_j$ is real. Using Green's
theorem and quasi-periodicity of the solution, we have
\begin{align}\label{ce-s2}
0=&\int_\Omega(\bar{\varphi}_1\Delta\varphi_1-\varphi_1\Delta\bar{\varphi}_1)\,{
\rm
d}\boldsymbol{x}+(\bar{\varphi}_2\Delta\varphi_2-\varphi_2\Delta\bar{\varphi}
_2)\,{\rm d}\boldsymbol{x}\notag\\
=&\int_S(\bar{\varphi}_1\partial_{\boldsymbol\nu}\varphi_1-\varphi_1\partial_{
\boldsymbol\nu}\bar{\varphi}_1)\,{\rm d}s+\int_S
(\bar{\varphi}_2\partial_{\boldsymbol\nu}\varphi_2-\varphi_2\partial_{
\boldsymbol\nu}\bar{\varphi}_2)\,{\rm d}s\notag\\
& +\int_\Gamma (\bar{\varphi}_1\partial_y
\varphi_1-\varphi_1\partial_y\bar{\varphi}_1)\,{\rm d}x+\int_\Gamma
(\bar{\varphi}_2\partial_y\varphi_2-\varphi_2\partial_y\bar{\varphi}_2)\,{\rm
d}x.
\end{align}
It follows from integration by parts and the boundary conditions on $S$ in
\eqref{ce-s1} that
\begin{align*}
& \int_S \bar{\varphi}_1\partial_{\boldsymbol\nu}\varphi_1\,{\rm d}s=\int_S
\bar{\varphi}_1\partial_{\boldsymbol\tau}\varphi_2\,{\rm d}s=-\int_S
\varphi_2\partial_{\boldsymbol\tau}\bar{\varphi}_1\,{\rm d}s=\int_S
\varphi_2\partial_{\boldsymbol\nu}\bar{\varphi}_2\,{\rm d}s,\\
& \int_S \bar{\varphi}_2\partial_{\boldsymbol\nu}\varphi_2\,{\rm d}s=-\int_S
\bar{\varphi}_2\partial_{\boldsymbol\tau}\varphi_1\,{\rm d}s=\int_S
\varphi_1\partial_{\boldsymbol\tau}\bar{\varphi}_2\,{\rm d}s=\int_S
\varphi_1\partial_{\boldsymbol\nu}\bar{\varphi}_1\,{\rm d}s,
\end{align*}
which yields after taking the imaginary part of \eqref{ce-s2} that
\begin{equation}\label{ce-s3}
 {\rm Im}\int_\Gamma (\bar{\varphi}_1\partial_y\varphi_1 +
\bar{\varphi}_2\partial_y\varphi_2)\,{\rm d}x=0.
\end{equation}
It follows from \eqref{vphi1} and \eqref{vphi2} that we have
\begin{align*}
 \varphi_1(x, b)&=r_0 e^{{\rm i}(\alpha x-\beta
b)} +\sum_{n\in U_1}r_1^{(n)} e^{\bigl({\rm i}\alpha_n
x+{\rm i}\Delta_1^{(n)}b\bigr)} +\sum_{n\notin U_1}r_1^{(n)} e^{\bigl({\rm
i}\alpha_n x-\Delta_1^{(n)}b\bigr)},\\
\varphi_2(x, b)&=\sum_{n\in U_2}r_2^{(n)} e^{\bigl({\rm i}\alpha_n
x+{\rm i}\Delta_2^{(n)}b\bigr)}+\sum_{n\notin U_2}r_2^{(n)} e^{\bigl({\rm
i}\alpha_n x-\Delta_2^{(n)}b\bigr)},
\end{align*}
and
\begin{align*}
 \partial_y\varphi_1(x, b)&=-{\rm i}\beta r_0 e^{{\rm i}(\alpha x-\beta
b)}  +\sum_{n\in U_1} {\rm i}\Delta_1^{(n)} r_1^{(n)} e^{\bigl({\rm i}\alpha_n
x+{\rm i}\Delta_1^{(n)}b\bigr)} -\sum_{n\notin U_1} \Delta_1^{(n)} r_1^{(n)}
e^{\bigl({\rm i}\alpha_n x-\Delta_1^{(n)}b\bigr)},\\
\partial_y\varphi_2(x, b)&=\sum_{n\in U_2}{\rm i}\Delta_2^{(n)} r_2^{(n)}
e^{\bigl({\rm i}\alpha_n x+{\rm i}\Delta_2^{(n)}b\bigr)}-\sum_{n\notin
U_2}\Delta_2^{(n)} r_2^{(n)} e^{\bigl({\rm i}\alpha_n x-\Delta_2^{(n)}b\bigr)}.
\end{align*}
Substituting the above four functions into \eqref{ce-s3} and using the
orthogonality of Fourier series, we get
\[
 \sum_{n\in U_1}\Delta_1^{(n)}|r_1^{(n)}|^2 +\sum_{n\in
U_2}\Delta_2^{(n)}|r_2^{(n)}|^2=\beta |r_0|^2,
\]
which completes the proof.
\end{proof}

In practice, the grating efficiencies \eqref{ge} can be computed from
\eqref{pffc} once the scattering problem is solved and the diffracted field
$\boldsymbol{v}$ is available on $\Gamma$.

\section{The PML problem}

In this section, we shall introduce the PML formulation for the scattering
problem and establish the well-posedness of the PML problem. An error estimate
will be shown for the solutions between the original scattering problem and the
PML problem.

\subsection{PML formulation}

Now we turn to the introduction of an absorbing PML layer. As is shown in
Figure \ref{pg_pml}, the domain $\Omega$ is covered by a chunck of PML layer
of thickness $\delta$ in $\Omega^{\rm e}$. Let $\rho(\tau)=\rho_1(\tau)+{\rm
i}\rho_2(\tau)$ be the PML function which is continuous and satisfies
\[
 \rho_1=1, \quad \rho_2=0 \quad\text{for}~ \tau<b\quad\text{and}\quad
\rho_1\geq 1,\quad \rho_2>0\quad\text{otherwise}.
\]
We introduce the PML by complex coordinate stretching:
\begin{equation}\label{cs}
 \hat{y}=\int_0^y \rho(\tau) {\rm d}\tau.
\end{equation}

\begin{figure}
\centering
\includegraphics[width=0.36\textwidth]{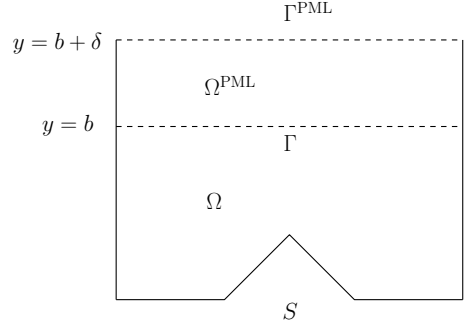}
\caption{Geometry of the PML problem.}
\label{pg_pml}
\end{figure}

Let $\hat{\boldsymbol x}=(x, \hat{y})$. Introduce the new field
\begin{equation}\label{nf}
 \hat{\boldsymbol u}(\boldsymbol{x})=\begin{cases}
\boldsymbol{u}_{\rm inc}(\boldsymbol{x})+(\boldsymbol{u}(\hat{\boldsymbol
x})-\boldsymbol{u}_{\rm inc}(\hat{\boldsymbol x})),\quad&
\boldsymbol{x}\in\Omega^{\rm e},\\
\boldsymbol{u}(\hat{\boldsymbol x}) ,\quad
&\boldsymbol{x}\in\Omega.
 \end{cases}
\end{equation}
It is clear to note that $\hat{\boldsymbol u}({\boldsymbol
x})=\boldsymbol{u}(\boldsymbol{x})$ in $\Omega$ since $\hat{\boldsymbol
x}=\boldsymbol{x}$ in $\Omega$. It can be verified from \eqref{une} and
\eqref{cs} that $\hat{\boldsymbol u}$ satisfies
\[
 \mathscr{L}(\hat{\boldsymbol u}-\boldsymbol{u}_{\rm inc})=0\quad\text{in}
~\Omega\cup \Omega^{\rm e}.
\]
Here the PML differential operator
\[
 \mathscr{L}\boldsymbol{u}:=\begin{bmatrix}
     (\lambda+2\mu)\partial_x (\rho(y)\partial_x u_1)+\mu\partial_y
(\rho^{-1}(y)\partial_y u_1)+(\lambda+\mu)\partial^2_{xy} u_2+\omega^2
\rho(y)u_1\\[5pt]
     \mu\partial_x(\rho(y)\partial_x u_2)+(\lambda+2\mu)\partial_y(\rho^{-1}(y)
\partial_y u_2)+(\lambda+\mu)\partial^2_{xy} u_1+\omega^2 \rho(y)u_2
    \end{bmatrix}.
\]

Define the PML region
\[
 \Omega^{\rm PML}=\{\boldsymbol{x}\in\mathbb{R}^2: 0<x<\Lambda,\,
b<y<b+\delta\}.
\]
Clearly, we have from \eqref{nf} and \eqref{vr} that the outgoing wave
$\hat{\boldsymbol u}(\boldsymbol{x})-\boldsymbol{u}_{\rm
inc}(\boldsymbol{x})$ in $\Omega^{\rm e}$ decays exponentially as
$y\to \infty$. Therefore, the homogeneous Dirichlet boundary condition can be
imposed on
\[
 \Gamma^{\rm PML}=\{\boldsymbol{x}\in\mathbb{R}^2:
0<x<\Lambda,\,y=b+\delta\}
\]
to truncate the PML problem. Define the computational domain for the PML
problem $D=\Omega\cup\Omega^{\rm PML}$. We arrive at the following truncated PML
problem: Find a quasi-periodic solution $\hat{\boldsymbol u}$ such that
\begin{equation}\label{pmlp}
 \begin{cases}
 \mathscr{L}\hat{\boldsymbol u}=\boldsymbol{g}&\quad\text{in} ~ D,\\
  \hat{\boldsymbol u}=\boldsymbol{u}_{\rm inc}&\quad\text{on} ~
\Gamma^{\rm PML},\\
\hat{\boldsymbol u}=0&\quad\text{on} ~ S,
 \end{cases}
\end{equation}
where
\[
 \boldsymbol{g}=\begin{cases}
                 \mathscr{L}\boldsymbol{u}_{\rm inc}&\quad\text{in} ~
\Omega^{\rm PML},\\
                 0&\quad\text{in} ~ \Omega.
                \end{cases}
\]

Define $H^1_{0, \rm qp}(D)=\{u\in H^1_{\rm qp}(D):
u=0~\text{on}~S\cup\Gamma^{\rm
PML}\}$. The weak formulation of the PML problem \eqref{pmlp} reads as
follows: Find $\hat{\boldsymbol u}\in H^1_{S, \rm qp}(D)^2$ such that
$\hat{\boldsymbol u}=\boldsymbol{u}_{\rm inc}$ on $\Gamma^{\rm PML}$ and
\begin{equation}\label{twp}
 b_D(\hat{\boldsymbol u}, \boldsymbol{v})=-\int_D
\boldsymbol{g}\cdot\bar{\boldsymbol v}{\rm d}\boldsymbol{x},\quad\forall\,
\boldsymbol{v}\in H^1_{0, \rm qp}(D)^2.
\end{equation}
Here for any domain $G\subset\mathbb{R}^2$, the sesquilinear form $b_G: H^1_{\rm qp}(G)^2\times
H^1_{\rm qp}(G)^2\to\mathbb{C}$ is defined by
\begin{align*}
 b_G(\boldsymbol{u}, \boldsymbol{v})=\int_G (\lambda+2\mu)
(\rho\partial_x u_1\partial_x\bar{v}_1+\rho^{-1}\partial_y
u_2\partial_y\bar{v}_2)+\mu(\rho^{-1}\partial_y
u_1\partial_y\bar{v}_1+\rho\partial_x u_2\partial_x\bar{v}_2)\\
+(\lambda+\mu)(\partial_x u_2\partial_y\bar{v}_1+\partial_x
u_1\partial_y\bar{v}_2)-\omega^2\rho(u_1\bar{v}_1+u_2\bar{v}_2)\,{\rm
d}\boldsymbol{x}.
\end{align*}

We will reformulate the variational problem \eqref{twp} in the domain
$D$ into an equivalent variational formulation in the domain $\Omega$, and
discuss the existence and uniqueness of the weak solution to the equivalent weak
formulation. To do so, we need to introduce the transparent boundary condition
for the truncated PML problem.

\subsection{Transparent boundary condition of the PML problem}

Let $\hat{\boldsymbol v}(\boldsymbol{x})=\boldsymbol{v}(\hat{\boldsymbol
x})=\boldsymbol{u}(\hat{\boldsymbol x})-\boldsymbol{u}_{\rm
inc}(\hat{\boldsymbol x})$. It is clear to note that $\hat{\boldsymbol v}$
satisfies the Navier equation in the complex coordinate
\begin{equation}\label{cvne}
\mu\Delta_{\hat{\boldsymbol x}}\hat{\boldsymbol v}+(\lambda
+\mu)\nabla_{\hat{\boldsymbol x}}\nabla_{\hat{\boldsymbol
x}}\cdot\hat{\boldsymbol v} +\omega^2\hat{\boldsymbol v}=0\quad\text{in} ~
\Omega^{\rm PML},
\end{equation}
where $\nabla_{\hat{\boldsymbol x}}=[\partial_x, \partial_{\hat y}]^\top$ with
$\partial_{\hat y}=\rho^{-1}(y)\partial_y$.

We introduce the Helmholtz decomposition to the solution of \eqref{cvne}:
\begin{equation}\label{chdv}
 \hat{\boldsymbol v}=\nabla_{\hat{\boldsymbol x}}\hat{\phi}_1+{\bf
curl}_{\hat{\boldsymbol x}}\hat{\phi}_2,
\end{equation}
where ${\bf curl}_{\hat{\boldsymbol x}}=[\partial_{\hat y}, -\partial_x]^\top$
and $\hat{\phi}_j(\boldsymbol{x})=\phi_j(\hat{\boldsymbol{x}})$ satisfies the
Helmholtz equation
\begin{equation}\label{che}
 \Delta_{\hat{\boldsymbol{x}}}\hat{\phi}_j +\kappa_j^2\hat{\phi}_j=0.
\end{equation}
Due to the quasi-periodicity of the solution, we have the Fourier series
expansion
\begin{equation}\label{cfse}
 \hat{\phi}_j(x, y)=\sum_{n\in\mathbb{Z}}\hat{\phi}_j^{(n)}(y)e^{{\rm i}\alpha_n
x}.
\end{equation}
Substituting \eqref{cfse} into \eqref{che} yields
\begin{equation}\label{code}
 \rho^{-1}\frac{\rm d}{{\rm d}y}\Bigl(\rho^{-1}\frac{{\rm
d}}{{\rm d}y}\hat{\phi}_j^{(n)}(y)\Bigr)+(\beta_j^{(n)})^2
\hat{\phi}_j^{(n)}(y)=0.
\end{equation}
The general solutions of \eqref{code} is
\begin{align*}
 \hat{\phi}_j^{(n)}(y)=A_j^{(n)} e^{{\rm
i}\beta_j^{(n)}\int_{b}^y\rho(\tau){\rm d}\tau} + B_j^{(n)}e^{-{\rm
i}\beta_j^{(n)}\int_{b}^y \rho(\tau){\rm d}\tau}.
\end{align*}
Denote by
\begin{equation}\label{zeta}
\zeta=\int_b^{b+\delta}\rho(\tau){\rm d}\tau.
\end{equation}
The coefficients $A_j^{(n)}$ and $B_j^{(n)}$ can be uniquely determined by
solving the following linear equations
\begin{equation}\label{lev}
\begin{bmatrix}
 \alpha_n & \alpha_n & \beta_2^{(n)} & -\beta_2^{(n)}\\[5pt]
 \beta_1^{(n)} &-\beta_1^{(n)} & -\alpha_n & -\alpha_n\\[5pt]
 \alpha_ne^{{\rm i}\beta_1^{(n)}\zeta} & \alpha_ne^{-{\rm
i}\beta_1^{(n)}\zeta}&
 \beta_2^{(n)}e^{{\rm i}\beta_2^{(n)}\zeta} & -\beta_2^{(n)}e^{-{\rm
i}\beta_2^{(n)}\zeta}\\[5pt]
 \beta_1^{(n)}e^{{\rm i}\beta_1^{(n)}\zeta} & -\beta_1^{(n)}e^{-{\rm
i}\beta_1^{(n)}\zeta}&-\alpha_ne^{{\rm i}\beta_2^{(n)}\zeta} &
-\alpha_ne^{-{\rm i}\beta_2^{(n)}\zeta}
\end{bmatrix}
\begin{bmatrix}
 A_1^{(n)}\\[5pt]
 B_1^{(n)}\\[5pt]
 A_2^{(n)}\\[5pt]
 B_2^{(n)}
\end{bmatrix}
=\begin{bmatrix}
  -{\rm i}\hat{v}_1^{(n)}(b)\\[5pt]
  -{\rm i}\hat{v}_2^{(n)}(b)\\[5pt]
  0\\[5pt]
  0
 \end{bmatrix},
\end{equation}
where we have used the Helmholtz decomposition \eqref{chdv} and the homogeneous
Dirichlet boundary condition
\[
\hat{\boldsymbol v}(x,b+\delta)=0\quad\text{on} ~ \Gamma^{\rm PML}
\]
due to the PML absorbing layer. Solving the linear equations \eqref{lev}, we
obtain
\begin{align*}
  A_1^{(n)}=&\frac{{\rm i}}{2\chi^{(n)}\hat{\chi}^{(n)}}
  \Big\{-\chi^{(n)}(\varepsilon_1^{(n)}+2)
  (\alpha_n\hat{v}_1^{(n)}(b)+\beta_2^{(n)}\hat{v}_2^{(n)}(b))\\
  &+2\beta_2^{(n)}(\varepsilon_1^{(n)}
  +2\delta_1^{(n)})(1+\delta_2^{(n)}
  -\eta^{(n)})(\alpha_n\beta_1^{(n)}\hat{v}_1^{(n)}(b)
  +\alpha_n^2\hat{v}_2^{(n)}(b))\Big\},\\
  B_1^{(n)}=&\frac{{\rm i}}{2\chi^{(n)}\hat{\chi}^{(n)}}
  \Big\{\chi^{(n)}\varepsilon_1^{(n)}
  (\alpha_n\hat{v}_1^{(n)}(b)-\beta_2^{(n)}\hat{v}_2^{(n)}(b))\\
  &+2(\varepsilon_1^{(n)}\delta_2^{(n)}
  +2(\delta_1^{(n)}+\delta_1^{(n)}\delta_2^{(n)})
  (\alpha_n\beta_1^{(n)}\beta_2^{(n)}\hat{v}_1^{(n)}(b)
  -\alpha_n^2\beta_2^{(n)}\hat{v}_2^{(n)}(b))\Big\},\\
  A_2^{(n)}=&\frac{{\rm i}}{2\chi^{(n)}\hat{\chi}^{(n)}}
  \Big\{\chi^{(n)}[\varepsilon_1^{(n)}\eta^{(n)}
-2(\varepsilon_1^{(n)}+1)(1+\delta_2^{(n)})]
  (\beta_1^{(n)}\hat{v}_1^{(n)}(b)-\alpha_n\hat{v}_2^{(n)}(b))\\
&+2\varepsilon_1^{(n)}(1+\delta_2^{(n)}-\eta^{(n)})((\beta_1^{(n)})^2\beta_2^{
(n)}\hat{v}_1^{(n)}(b)-\alpha_n^3\hat{v}_2^{(n)}(b))\Big\},\\
  B_2^{(n)}=&\frac{{\rm i}}{2\chi^{(n)}\hat{\chi}^{(n)}}
  \Big\{\chi^{(n)}[2\delta_2^{(n)}(\varepsilon_1^{(n)}+1)
  -\varepsilon_1^{(n)}\eta^{(n)}]
  (\beta_1^{(n)}\hat{v}_1^{(n)}(b)+\alpha_n\hat{v}_2^{(n)}(b))\\
  &-2\delta_2^{(n)}(\varepsilon_1^{(n)}+2)
  ((\beta_1^{(n)})^2\beta_2^{(n)}\hat{v}_1^{(n)}(b)
  +\alpha_n^3\hat{v}_2^{(n)}(b))\Big\},
\end{align*}
where
\begin{equation}\label{vepsn}
\begin{cases}
\varepsilon_j^{(n)}&=\coth(-{\rm i}\beta_j^{(n)}\zeta)-1,\\
 \delta_j^{(n)}&=(e^{{\rm i}\beta_2^{(n)}\zeta}-e^{{\rm
i}\beta_1^{(n)}\zeta})/
  (e^{-{\rm i}\beta_j^{(n)}\zeta}-e^{{\rm i}\beta_j^{(n)}\zeta}),\\
\eta^{(n)}&=\delta_2^{(n)}/\delta_1^{(n)}=(e^{-{\rm
i}\beta_1^{(n)}\zeta}-e^{{\rm i}\beta_1^{(n)}\zeta})/(e^{-{\rm
i}\beta_2^{(n)}\zeta}-e^{{\rm i}\beta_2^{(n)}\zeta})
\end{cases}
\end{equation}
and
\begin{equation}\label{hchi}
\hat{\chi}^{(n)}=\chi^{(n)}+4(\delta_2^{(n)}-\delta_1^{(n)}
-\delta_1^{(n)}\delta_2^{(n)})\alpha_n^2\beta_1^{(n)}\beta_2^{(n)}/\chi^{(n)}.
\end{equation}
Here, the hyperbolic cotangent function is defined as
\[
 \coth(t)=(e^t+e^{-t})/(e^t-e^{-t}).
\]
Following the Helmholtz decomposition \eqref{chdv} again, we have
\begin{align}\label{cvre}
\hat{\boldsymbol v}(x, y)={\rm i} \sum_{n\in\mathbb{Z}}
&\begin{bmatrix}
          \alpha_n\\[5pt]
          \beta_1^{(n)}
         \end{bmatrix}
A_1^{(n)}e^{{\rm i}\bigl(\alpha_n x
+\beta_1^{(n)}\int_{b}^y\rho(\tau){\rm
d}\tau\bigr)}+\begin{bmatrix}
                     \alpha_n\\[5pt]
                     -\beta_1^{(n)}
                    \end{bmatrix}
B_1^{(n)}e^{{\rm i}\bigl(\alpha_n x
-\beta_1^{(n)}\int_{b}^y\rho(\tau){\rm d}\tau\bigr)}\notag\\
 &\begin{bmatrix}
                      \beta_2^{(n)}\\[5pt]
                      -\alpha_n
                     \end{bmatrix}
A_2^{(n)}e^{{\rm i}\bigl(\alpha_n x
+\beta_2^{(n)}\int_{b}^y\rho(\tau){\rm d}\tau\bigr)}-\begin{bmatrix}
                     \beta_2^{(n)}\\[5pt]
                     \alpha_n
                    \end{bmatrix}
B_2^{(n)}e^{{\rm i}\bigl(\alpha_n x
-\beta_2^{(n)}\int_{b}^y\rho(\tau){\rm d}\tau\bigr)}.
\end{align}

Combining \eqref{cvre} and \eqref{do}, we derive the transparent boundary
condition for the PML problem on $\Gamma$:
\[
\mathscr{D}\hat{\boldsymbol v}=\mathscr{T}^{\rm
PML}\hat{\boldsymbol v}:=\sum_{n\in\mathbb{Z}}\hat{M}^{(n)}
\hat{\boldsymbol v}^{(n)}(b)e^{{\rm i}\alpha_n x},
\]
where the matrix
\[
\hat{M}^{(n)}=
 \begin{bmatrix}
   \hat{m}_{11}^{(n)}&\hat{m}_{12}^{(n)}\\[5pt]
   \hat{m}_{21}^{(n)}&\hat{m}_{22}^{(n)}
 \end{bmatrix}.
\]
Here the entries are
\begin{align*}
  \hat{m}_{11}^{(n)}&=\frac{{\rm i}\omega^2\beta_1^{(n)}}{\hat{\chi}^{(n)}}
  +\frac{{\rm i}\omega^2\beta_1^{(n)}}{\chi^{(n)}\hat{\chi}^{(n)}}
  \Big[\varepsilon_1^{(n)}\alpha_n^2
  +(\varepsilon_1^{(n)}\eta^{(n)}+2\delta_2^{(n)})
  \beta_1^{(n)}\beta_2^{(n)}\Big],\\
  \hat{m}_{12}^{(n)}&={\rm i}\mu\alpha_n
  -\frac{{\rm i}\omega^2\alpha_{n}}{\hat{\chi}^{(n)}}
  -\frac{{\rm
i}\omega^2\alpha_{n}\beta_1^{(n)}\beta_2^{(n)}}{\chi^{(n)}\hat{\chi}^{(n)}}
  \Big[\varepsilon_1^{(n)}(1+2\delta_2^{(n)}-\eta^{(n)})
  +2\delta_2^{(n)}\Big],\\
  \hat{m}_{21}^{(n)}&=-{\rm i}\mu\alpha_n+\frac{{\rm
i}\omega^2\alpha_n}{\hat{\chi}^{(n)}}
 -\frac{{\rm
i}\omega^2\alpha_n\beta_1^{(n)}\beta_2^{(n)}}{\chi^{(n)}\hat{\chi}^{(n)}}
\Big[\varepsilon_1^{(n)}(1+2\delta_2^{(n)}-\eta^{(n)}
)+2(2\delta_1^{(n)}+2\delta_1^{(n)}\delta_2^{(n)}-\delta_2^{
(n)})\Big],\\
  \hat{m}_{22}^{(n)}&=\frac{{\rm i}\omega^2\beta_2^{(n)}}{\hat{\chi}^{(n)}}
  +\frac{{\rm i}\omega^2\beta_2^{(n)}}{\chi^{(n)}\hat{\chi}^{(n)}}
  \Big[\varepsilon_1^{(n)}\beta_1^{(n)}\beta_2^{(n)}
+(\varepsilon_1^{(n)}\eta^{(n)}+2\delta_2^{(n)})\alpha_{n}
^2\Big].
\end{align*}
Equivalently, we have the transparent boundary condition for the total field
$\hat{\boldsymbol u}$ on $\Gamma$:
\[
 \mathscr{D} \hat{\boldsymbol u}=\mathscr{T}^{\rm PML}\hat{\boldsymbol
u}+\boldsymbol{f}^{\rm PML},
\]
where $\boldsymbol{f}^{\rm PML}=\mathscr{D}\hat{\boldsymbol u}_{\rm
inc}-\mathscr{T}^{\rm PML}\hat{\boldsymbol u}_{\rm inc}$.

The PML problem can be reduced to the following boundary value problem:
\begin{equation}\label{cbvp}
 \begin{cases}
  \mu\Delta\boldsymbol{u}^{\rm PML}+(\lambda+\mu)\nabla
\nabla\cdot\boldsymbol{u}^{\rm
PML}+\omega^2\boldsymbol{u}^{\rm PML}=0\quad&\text{in}~ \Omega,\\
\boldsymbol{u}^{\rm PML}=0\quad&\text{on} ~ S,\\
\mathscr{D}\boldsymbol{u}^{\rm PML}=\mathscr{T}^{\rm PML}\boldsymbol{u}^{\rm
PML}+\boldsymbol{f}^{\rm PML}\quad&\text{on}
~ \Gamma.
 \end{cases}
\end{equation}
The weak formulation of \eqref{cbvp} is to find $\boldsymbol{u}^{\rm PML}\in
H^1_{S, \rm qp}(\Omega)^2$ such that
\begin{equation}\label{cwp}
 a^{\rm PML}(\boldsymbol{u}^{\rm PML},
\boldsymbol{v})=\langle\boldsymbol{f}^{\rm PML},
\boldsymbol{v}\rangle_{\Gamma},\quad\forall\,\boldsymbol{v}\in H^1_{S, \rm
qp}(\Omega)^2,
\end{equation}
where the sesquilinear form $a^{\rm PML}: H^1_{S, \rm qp}(\Omega)^2\times
H^1_{S, \rm qp}(\Omega)^2\to\mathbb{C}$ is defined by
\begin{align}\label{csf}
 a^{\rm PML}(\boldsymbol{u}, \boldsymbol{v})=\mu\int_\Omega
\nabla\boldsymbol{u}:\nabla\bar{\boldsymbol
v}{\rm d}\boldsymbol{x}+(\lambda+\mu)\int_\Omega (\nabla\cdot\boldsymbol{u}
)(\nabla\cdot\bar{\boldsymbol v})\,{\rm d}\boldsymbol{x}\notag\\
-\omega^2\int_\Omega \boldsymbol{u}\cdot\bar{\boldsymbol v}\, {\rm
d}\boldsymbol{x}- \langle\mathscr{T}^{\rm PML}\boldsymbol{u},
\boldsymbol{v}\rangle_{\Gamma}.
\end{align}

The following lemma establishes the relationship between the variational
problem \eqref{cwp} and the weak formulation \eqref{twp}. The proof is
straightforward based on our constructions of the transparent boundary
conditions for the PML problem. The details of the proof is omitted for
briefty.

\begin{lemm}
Any solution $\hat{\boldsymbol u}$ of the variational problem \eqref{twp}
restricted to $\Omega$ is a solution of the variational \eqref{cwp}; conversely,
any solution $\boldsymbol{u}^{\rm PML}$ of the variational problem \eqref{cwp}
can be uniquely extended to the whole domain to be a solution $\hat{\boldsymbol
u}$ of the variational problem \eqref{twp} in $D$.
\end{lemm}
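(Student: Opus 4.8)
The plan is to use, as the authors indicate, that the operator $\mathscr{T}^{\rm PML}$ was constructed precisely so that the explicit layer field \eqref{cvre} satisfies $\mathscr{D}\hat{\boldsymbol v}=\mathscr{T}^{\rm PML}\hat{\boldsymbol v}$ on $\Gamma$, and to combine this with Green's identity applied separately in the two subdomains $\Omega$ and $\Omega^{\rm PML}$ of $D$. The observation underlying both directions is that a function $\hat{\boldsymbol u}\in H^1_{S,\rm qp}(D)^2$ with $\hat{\boldsymbol u}=\boldsymbol u_{\rm inc}$ on $\Gamma^{\rm PML}$ solves \eqref{twp} if and only if (i) $\mathscr{L}\hat{\boldsymbol u}=\boldsymbol g$ holds in $\Omega$ and in $\Omega^{\rm PML}$ separately, obtained by testing \eqref{twp} against functions supported in the interior of each subdomain, and (ii) the conormal trace dictated by $b_D$ is continuous across the interface $\Gamma$; since this conormal operator differs from $\mathscr{D}$ only by a tangential-derivative term that is automatically continuous across $\Gamma$, (ii) is equivalent to continuity of $\mathscr{D}\hat{\boldsymbol u}$ across $\Gamma$. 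Note that in $\Omega$, where $\rho\equiv1$, the operator $\mathscr{L}$ is just the Navier operator, so (i) there reduces to the Navier equation.

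For the forward implication, let $\hat{\boldsymbol u}$ solve \eqref{twp} and set $\boldsymbol u^{\rm PML}=\hat{\boldsymbol u}|_\Omega$. In $\Omega^{\rm PML}$, where $\boldsymbol g=\mathscr{L}\boldsymbol u_{\rm inc}$, part (i) says $\hat{\boldsymbol v}=\hat{\boldsymbol u}-\boldsymbol u_{\rm inc}$ solves the transformed Navier equation \eqref{cvne} with $\hat{\boldsymbol v}=0$ on $\Gamma^{\rm PML}$; the Helmholtz decomposition and the separation of variables producing \eqref{lev} then force $\hat{\boldsymbol v}$ to coincide with \eqref{cvre}, whence $\mathscr{D}\hat{\boldsymbol v}=\mathscr{T}^{\rm PML}\hat{\boldsymbol v}$ on $\Gamma$ computed from the $\Omega^{\rm PML}$ side ($\mathscr{D}$ being insensitive to the side since $\rho\equiv1$ near $\Gamma$). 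By (ii) the same identity holds from the $\Omega$ side, so on $\Gamma$ we have $\mathscr{D}\boldsymbol u^{\rm PML}=\mathscr{D}\hat{\boldsymbol v}+\mathscr{D}\boldsymbol u_{\rm inc}=\mathscr{T}^{\rm PML}\hat{\boldsymbol v}+\mathscr{D}\boldsymbol u_{\rm inc}=\mathscr{T}^{\rm PML}\boldsymbol u^{\rm PML}+\boldsymbol f^{\rm PML}$, using that $\mathscr{T}^{\rm PML}$ depends only on the trace on $\Gamma$, where $\hat{\boldsymbol v}=\boldsymbol u^{\rm PML}-\boldsymbol u_{\rm inc}$, and that $\boldsymbol f^{\rm PML}=\mathscr{D}\boldsymbol u_{\rm inc}-\mathscr{T}^{\rm PML}\boldsymbol u_{\rm inc}$ on $\Gamma$. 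Together with the Navier equation in $\Omega$ and $\boldsymbol u^{\rm PML}=0$ on $S$, this is \eqref{cbvp}; since $\boldsymbol u^{\rm PML}$ satisfies the Navier equation, integrating by parts in $\Omega$ gives $a^{\rm PML}(\boldsymbol u^{\rm PML},\boldsymbol v)=\langle\mathscr{D}\boldsymbol u^{\rm PML}-\mathscr{T}^{\rm PML}\boldsymbol u^{\rm PML},\boldsymbol v\rangle_{\Gamma}$ for all $\boldsymbol v\in H^1_{S,\rm qp}(\Omega)^2$, and the transparent boundary condition just established turns this into \eqref{cwp}.

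For the converse, given a solution $\boldsymbol u^{\rm PML}$ of \eqref{cwp} I would let $\hat{v}_j^{(n)}(b)$ be the Fourier coefficients of $\boldsymbol u^{\rm PML}-\boldsymbol u_{\rm inc}$ on $\Gamma$, solve \eqref{lev} for $A_j^{(n)},B_j^{(n)}$ — uniquely solvable under the paper's standing no-resonance assumptions — define $\hat{\boldsymbol v}$ in $\Omega^{\rm PML}$ by \eqref{cvre}, and set $\hat{\boldsymbol u}=\boldsymbol u^{\rm PML}$ in $\Omega$ and $\hat{\boldsymbol u}=\hat{\boldsymbol v}+\boldsymbol u_{\rm inc}$ in $\Omega^{\rm PML}$. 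The first two rows of \eqref{lev} make the traces of $\hat{\boldsymbol u}$ match across $\Gamma$, so $\hat{\boldsymbol u}\in H^1_{S,\rm qp}(D)^2$ (here one uses that \eqref{cvre} converges in $H^1(\Omega^{\rm PML})^2$, a consequence of the exponential decay of $e^{{\rm i}\beta_j^{(n)}\int_b^y\rho}$ for the evanescent modes $|\alpha_n|>\kappa_j$ in \eqref{beta} and of $\boldsymbol u^{\rm PML}|_\Gamma\in H^{1/2}(\Gamma)^2$); the last two rows give $\hat{\boldsymbol u}=\boldsymbol u_{\rm inc}$ on $\Gamma^{\rm PML}$; and $\hat{\boldsymbol u}=0$ on $S$ is inherited. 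By construction $\hat{\boldsymbol v}$ solves \eqref{cvne}, hence $\mathscr{L}\hat{\boldsymbol u}=\boldsymbol g$ in $\Omega^{\rm PML}$, and $\boldsymbol u^{\rm PML}$ solves the Navier equation in $\Omega$; finally $\mathscr{D}\hat{\boldsymbol u}$ is continuous across $\Gamma$, since its $\Omega$-side value is $\mathscr{T}^{\rm PML}\boldsymbol u^{\rm PML}+\boldsymbol f^{\rm PML}$ by \eqref{cwp} and its $\Omega^{\rm PML}$-side value is $\mathscr{T}^{\rm PML}\hat{\boldsymbol v}+\mathscr{D}\boldsymbol u_{\rm inc}$ by the derivation of $\mathscr{T}^{\rm PML}$ from \eqref{cvre}, and these coincide since $\hat{\boldsymbol v}|_\Gamma=\boldsymbol u^{\rm PML}|_\Gamma-\boldsymbol u_{\rm inc}|_\Gamma$. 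Thus (i)--(ii) hold and $\hat{\boldsymbol u}$ solves \eqref{twp}. Uniqueness of the extension is immediate: any admissible extension restricted to $\Omega^{\rm PML}$ must solve \eqref{cvne} with $\hat{\boldsymbol v}=0$ on $\Gamma^{\rm PML}$ and the prescribed trace on $\Gamma$, and that problem has only the solution \eqref{cvre}.

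The steps I expect to be purely mechanical are the localisation argument yielding (i), the quasi-periodic cancellation of the vertical-boundary contributions in every integration by parts, and the $H^{1/2}(\Gamma)^2\to H^{-1/2}(\Gamma)^2$ mapping property of $\mathscr{T}^{\rm PML}$ needed to make all boundary pairings meaningful. The one point that calls for genuine care is the interface matching in (ii): one must check that the conormal trace of $\hat{\boldsymbol u}$ on $\Gamma$ obtained from $\Omega$ via \eqref{cwp} and from $\Omega^{\rm PML}$ via the explicit solution \eqref{cvre} are literally the same element of $H^{-1/2}(\Gamma)^2$ — this is the identity that makes the two Green's identities telescope — and the signs must be tracked because the outward normals on $\Gamma$ from the two sides are opposite. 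Everything else is bookkeeping resting on the construction of $\mathscr{T}^{\rm PML}$, which is why the proof can be called straightforward.
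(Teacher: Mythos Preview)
Your proposal is correct and is precisely the argument the paper has in mind: the paper omits the proof entirely, stating only that it is ``straightforward based on our constructions of the transparent boundary conditions for the PML problem,'' and your writeup is a faithful unpacking of that sentence, using the explicit layer solution \eqref{cvre} and the resulting identity $\mathscr{D}\hat{\boldsymbol v}=\mathscr{T}^{\rm PML}\hat{\boldsymbol v}$ on $\Gamma$ to match the conormal traces across the interface via Green's identity in each subdomain.
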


\subsection{Convergence of the PML solution}

Now we turn to estimating the error between $\boldsymbol{u}^{\rm PML}$ and
$\boldsymbol{u}$. The key is to estimate the error of the boundary operators
$\mathscr{T}^{\rm PML}$ and $\mathscr{T}$.

Let
\[
\Delta^{-}_j=\min\{\Delta_j^{(n)}: n\in
U_j\},\quad \Delta^{+}_j=\min\{\Delta_j^{(n)}: n\notin U_j\}.
\]
Denote
\begin{align*}
F=&\max_{j=1, 2} \left(\frac{\Delta_j^{-}}{e^{\frac{1}{2}\Delta_j^{-}{\rm
Im}\zeta} - 1},\, \frac{\Delta_j^{+}}{e^{\frac{1}{2}\Delta_j^+{\rm
Re}\zeta}-1}\right)\\
&\quad\times \max \left\{12\kappa_2,16\kappa_2^4, 8+2\kappa_2^2,
\frac{16\kappa_2^3}{\kappa_1^2},\frac{24(16+\kappa_2^2)^2}{\kappa_1^2}\right\}.
\end{align*}
The constant $F$ will be used to control the modeling error between the
PML problem and the original scattering problem. Once the incoming plane wave
$\boldsymbol{u}_{\rm inc}$ is fixed, the quantities $\Delta_j^{-},
\Delta_j^{+}$ are fixed. Thus the constant $F$ approaches to zero
exponentially as the PML parameters ${\rm Re}\zeta$ and ${\rm Im}\zeta$ tend to
infinity. Recalling the definition of $\zeta$ in \eqref{zeta}, we know that
${\rm Re}\zeta$ and ${\rm Im}\zeta$ can be calculated by the medium property
$\rho(y)$, which is usually taken as a power function:
\[
 \rho(y)= 1+\sigma\left(\dfrac{y-b}{\delta}\right)^m \quad\text{if}~ y\geq b,
\quad m\geq 1.
\]
Thus we have
\[
 {\rm Re}\zeta=\left(1+\frac{{\rm Re}\sigma}{m+1}\right)\delta, \quad {\rm
Im}\zeta=\left(\frac{{\rm Im}\sigma}{m+1}\right)\delta.
\]
In practice, we may pick some appropriate PML parameters $\sigma$ and $\delta$
such that ${\rm Re}\zeta\geq 1$.

\begin{lemm}\label{boe}
For any $\boldsymbol{u}, \boldsymbol{v}\in H^1_{S, \rm qp}(\Omega)^2$, we have
\[
|\langle (\mathscr{T}^{\rm PML}-\mathscr{T})\boldsymbol{u},
\boldsymbol{v}\rangle_{\Gamma}|\leq \hat{F}\|\boldsymbol{u}\|_{L^2(\Gamma)^2}
\|\boldsymbol{v}\|_{L^2(\Gamma)^2},
\]
where $\hat{F}=17\omega^2 F/\kappa_1^4$.
\end{lemm}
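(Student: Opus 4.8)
The operators $\mathscr{T}$ and $\mathscr{T}^{\rm PML}$ are Fourier multipliers on $\Gamma$ with the $2\times2$ matrix symbols $M^{(n)}$ and $\hat{M}^{(n)}$, so the plan is to reduce the claim to a uniform estimate of the symbol difference. Expanding the traces of $\boldsymbol{u},\boldsymbol{v}\in H^1_{S,\rm qp}(\Omega)^2$ on $\Gamma$ in quasi-periodic Fourier series and using Parseval's identity,
\[
 \langle(\mathscr{T}^{\rm PML}-\mathscr{T})\boldsymbol{u},\boldsymbol{v}\rangle_{\Gamma}
 =\Lambda\sum_{n\in\mathbb{Z}}\bigl((\hat{M}^{(n)}-M^{(n)})\boldsymbol{u}^{(n)}\bigr)\cdot\overline{\boldsymbol{v}^{(n)}}.
\]
Bounding the $n$-th term by $2\max_{i,j}\bigl|(\hat{M}^{(n)}-M^{(n)})_{ij}\bigr|\,|\boldsymbol{u}^{(n)}|\,|\boldsymbol{v}^{(n)}|$, summing with the Cauchy--Schwarz inequality in $\ell^2$, and recalling $\|\boldsymbol{u}\|_{L^2(\Gamma)^2}^2=\Lambda\sum_n|\boldsymbol{u}^{(n)}|^2$, it suffices to show that $\max_{i,j}|(\hat{M}^{(n)}-M^{(n)})_{ij}|\le\tfrac12\hat{F}$ for every $n\in\mathbb{Z}$.

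The argument then rests on two preliminary estimates. First, a lower bound for the denominators: from $\chi^{(n)}=\alpha_n^2+\beta_1^{(n)}\beta_2^{(n)}$, from $\kappa_1<\kappa_2$, and from \eqref{beta}, one proves $|\chi^{(n)}|\ge\kappa_1^2$ for all $n$ by distinguishing $|\alpha_n|<\kappa_1$ (here $s\mapsto s+\bigl((\kappa_1^2-s)(\kappa_2^2-s)\bigr)^{1/2}$ is nonincreasing on $[0,\kappa_1^2]$ by AM--GM, with value $\kappa_1^2$ at the endpoint), $\kappa_1<|\alpha_n|<\kappa_2$ (here $\chi^{(n)}=\alpha_n^2+{\rm i}\Delta_1^{(n)}\Delta_2^{(n)}$, so $|\chi^{(n)}|\ge\alpha_n^2$), and $|\alpha_n|>\kappa_2$ (here $\Delta_1^{(n)}\Delta_2^{(n)}\le\alpha_n^2-\kappa_1^2$); one then checks, via the decay estimates below, that the correction term in \eqref{hchi} has modulus at most $\tfrac12|\chi^{(n)}|$, so that also $|\hat{\chi}^{(n)}|\gtrsim\kappa_1^2$. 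Second, the exponential smallness of the PML factors \eqref{vepsn}: since $\coth t-1=2/(e^{2t}-1)$ and ${\rm Re}(-{\rm i}\beta_j^{(n)}\zeta)$ equals $\Delta_j^{(n)}\,{\rm Im}\,\zeta$ for $n\in U_j$ and $\Delta_j^{(n)}\,{\rm Re}\,\zeta$ for $n\notin U_j$, one gets $|\varepsilon_j^{(n)}|,|\delta_j^{(n)}|,|\eta^{(n)}|\lesssim 1/(e^{2\Delta_j^{(n)}s_j}-1)$ with $s_j\in\{{\rm Re}\,\zeta,{\rm Im}\,\zeta\}$; combining this with the elementary inequality $e^x-1\ge xe^{x/2}$ ($x\ge0$) and the monotonicity of $s\mapsto s/(e^{cs}-1)$ on $(0,\infty)$, each product of the form (polynomial in $\alpha_n,\beta_1^{(n)},\beta_2^{(n)}$)$\times$(one PML factor) is controlled by a constant multiple of $\Delta_j^{\pm}/(e^{\frac12\Delta_j^{\pm}s_j}-1)$. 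This is where the quantity $F$, with the numerator $\Delta_j^{\pm}$ and the bundled constants $12\kappa_2$, $16\kappa_2^4$, $8+2\kappa_2^2,\dots$, comes from.

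With these in hand I would expand $\hat{M}^{(n)}-M^{(n)}$ entry by entry. The key point is that all contributions that do not decay cancel: the terms ${\rm i}\mu\alpha_n$ in the off-diagonal entries cancel exactly, and the leading terms ${\rm i}\omega^2\beta_j^{(n)}/\hat{\chi}^{(n)}$ combine with $-{\rm i}\omega^2\beta_j^{(n)}/\chi^{(n)}$ into ${\rm i}\omega^2\beta_j^{(n)}(\chi^{(n)}-\hat{\chi}^{(n)})/(\chi^{(n)}\hat{\chi}^{(n)})$, which carries a factor $\delta_k^{(n)}$ from \eqref{hchi}. What is left in each entry is a finite sum of terms of the type just described divided by $\chi^{(n)}\hat{\chi}^{(n)}$ (occasionally with an extra $\chi^{(n)}$); rewriting the polynomials by means of $\alpha_n^2=\kappa_j^2-(\beta_j^{(n)})^2$ and $\beta_1^{(n)}\beta_2^{(n)}=\chi^{(n)}-\alpha_n^2$ and then applying the two estimates above bounds every entry by $\tfrac12\hat{F}=\tfrac{17}{2}\omega^2F/\kappa_1^4$, which together with the reduction in the first paragraph gives the lemma.

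The main obstacle is this last computation: one must organise the subtraction so that every quantity growing polynomially in $n$ is either cancelled or paired with a PML factor decaying fast enough to absorb it, and then track the numerical constants so that the sum over the several bundled constants in $F$, together with the factors $17$ and $\kappa_1^{-4}$, actually closes. The most delicate case is that of a near-resonant mode, where some $\Delta_j^{+}$ is small so that $\varepsilon_j^{(n)}$ and $\delta_j^{(n)}$ are \emph{not} themselves small; there one must exploit that the companion factor $\beta_j^{(n)}={\rm i}\Delta_j^{(n)}$ is correspondingly small, so that products such as $\beta_1^{(n)}\varepsilon_1^{(n)}$ and $\beta_1^{(n)}\delta_1^{(n)}$ remain bounded — which is precisely why the estimate in the definition of $F$ is written with the numerator $\Delta_j^{\pm}$ rather than as a bare exponential.
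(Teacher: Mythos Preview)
Your proposal is correct and follows essentially the same route as the paper: Fourier expansion on $\Gamma$, orthogonality plus Cauchy--Schwarz to reduce to a uniform bound on the symbol difference $M^{(n)}-\hat M^{(n)}$, and then the case analysis $(n\in U_1,\ n\in U_2\setminus U_1,\ n\notin U_2)$ combining the lower bound $|\chi^{(n)}|\ge\kappa_1^2$ with the exponential smallness of $\varepsilon_j^{(n)},\delta_j^{(n)}$ and the compensating factor $\Delta_j^{(n)}$ in the near-resonant regime. The only cosmetic difference is that the paper packages the symbol estimate as a separate proposition and bounds the Frobenius norm $\|M^{(n)}-\hat M^{(n)}\|_2^2\le 272\,\omega^4\kappa_1^{-8}F^2$ (whence $\sqrt{272}<17$), whereas you work with $2\max_{i,j}|(\hat M^{(n)}-M^{(n)})_{ij}|$; either choice leads to the same technical estimates and the same constant $\hat F$.
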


\begin{proof}
For any $\boldsymbol{u}, \boldsymbol{v}\in H^1_{S, \rm qp}(\Omega)^2$, we have
the following Fourier series expansions:
\[
 \boldsymbol{u}(x, b)=\sum_{n\in\mathbb{Z}}\boldsymbol{u}^{(n)}(b)e^{{\rm
i}\alpha_n x},\quad
\boldsymbol{v}(x, b)=\sum_{n\in\mathbb{Z}}\boldsymbol{v}^{(n)}(b)e^{{\rm
i}\alpha_n x},
\]
which gives
\[
 \|\boldsymbol{u}\|^2_{L^2(\Gamma)^2}=\Lambda\sum_{n\in\mathbb{Z}}|\boldsymbol
{ u }^{(n)} (b)|^2 , \quad
\|\boldsymbol{v}\|^2_{L^2(\Gamma)^2}=\Lambda\sum_{n\in\mathbb{Z}}
|\boldsymbol{v} ^{(n)} (b)|^2.
\]
It follows from the orthogonality of Fourier series, the Cauchy--Schwarz
inequality, and Proposition \ref{me} that we have
\begin{align*}
&|\langle (\mathscr{T}^{\rm PML}-\mathscr{T})\boldsymbol{u},
\boldsymbol{v}\rangle_{\Gamma}|=\bigg{|}\Lambda\sum_{n\in\mathbb{Z}}\bigl(
(M^{(n)}-\hat{M}^{(n)}) \boldsymbol{u}^{(n)}(b)\bigr)\cdot\bar{\boldsymbol
v}^{(n)}(b)\bigg{|}\\
&\leq\Bigl(\Lambda\sum_{n\in\mathbb{Z}}\|M^{(n)}-\hat{M}^{(n)}\|_2^2\,
|{\boldsymbol u}^{(n)}(b)|^2\Bigr)^{1/2}\Bigl(\Lambda\sum_{n\in\mathbb{Z}}|
\boldsymbol{v}^{(n)}(b)|^2
\Bigr)^{1/2}\leq\hat{F}\|\boldsymbol{u}\|_{L^2(\Gamma)^2}\|\boldsymbol{v}\|_
{ L^2(\Gamma)^2},
\end{align*}
which completes the proof.
\end{proof}

Let $a=\min_{y}\{\boldsymbol{x}\in S\}$. Denote
$\tilde{\Omega}=\{\boldsymbol{x}\in\mathbb{R}^2: 0<x<\Lambda,\,a<y<b\}$.

\begin{lemm}\label{tr}
 For any $\boldsymbol{u}\in H^1_{S, \rm qp}(\Omega)^2$, we have
 \[
\|\boldsymbol{u}\|_{L^2(\Gamma)^2}\leq \|\boldsymbol{u}\|_{H^{1/2}(\Gamma)^2}
\leq\gamma_2\|\boldsymbol{u}\|_{H^1(\Omega)^2},
 \]
where $\gamma_2=(1+(b-a)^{-1})^{1/2}$.
\end{lemm}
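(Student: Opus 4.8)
The plan is to establish the two inequalities separately. The first one, $\|\boldsymbol{u}\|_{L^2(\Gamma)^2}\le\|\boldsymbol{u}\|_{H^{1/2}(\Gamma)^2}$, is immediate from the definition of the trace norm: writing $\boldsymbol{u}(x,b)=\sum_{n}\boldsymbol{u}^{(n)}(b)e^{\mathrm{i}\alpha_n x}$ we have $\|\boldsymbol{u}\|_{L^2(\Gamma)^2}^2=\Lambda\sum_n|\boldsymbol{u}^{(n)}(b)|^2\le\Lambda\sum_n(1+\alpha_n^2)^{1/2}|\boldsymbol{u}^{(n)}(b)|^2=\|\boldsymbol{u}\|_{H^{1/2}(\Gamma)^2}^2$, since $(1+\alpha_n^2)^{1/2}\ge 1$. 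So the real content is the trace inequality $\|\boldsymbol{u}\|_{H^{1/2}(\Gamma)^2}\le\gamma_2\|\boldsymbol{u}\|_{H^1(\Omega)^2}$ with the explicit constant $\gamma_2=(1+(b-a)^{-1})^{1/2}$.

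For the trace inequality I would work componentwise and reduce to a scalar statement, then sum the squares over the two components (the $\ell^2$ product-space norms make this transparent). Since $\boldsymbol{u}\in H^1_{S,\mathrm{qp}}(\Omega)^2$ vanishes on $S$ and $S$ lies below the horizontal line $y=a$, I would use the auxiliary slab $\tilde\Omega=\{0<x<\Lambda,\ a<y<b\}\subset\Omega$ and exploit that each component $u$ extends by zero below $y=a$ within its vertical fibers. Fix a Fourier mode: writing $u(x,y)=\sum_n u^{(n)}(y)e^{\mathrm{i}\alpha_n x}$ on $\tilde\Omega$, the standard one-dimensional identity gives, for almost every such $n$,
\[
(1+\alpha_n^2)^{1/2}|u^{(n)}(b)|^2
=\int_a^b \frac{\mathrm{d}}{\mathrm{d}y}\Bigl[(1+\alpha_n^2)^{1/2}\,w(y)\,|u^{(n)}(y)|^2\Bigr]\,\mathrm{d}y,
\]
where $w$ is an affine weight with $w(a)=0$, $w(b)=1$, namely $w(y)=(y-a)/(b-a)$; here I use that $u^{(n)}(a)=0$ because $u$ vanishes on $S$ and hence, after zero extension, on $\{y=a\}$. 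Expanding the derivative and applying the elementary inequality $2|u^{(n)}|\,|\,{u^{(n)}}'|\le |\alpha_n|^{-1}|\alpha_n|^2|u^{(n)}|^2+|\alpha_n|\,|{u^{(n)}}'|^2$ wait—more simply $2ab\le \epsilon a^2+\epsilon^{-1}b^2$ with $\epsilon=(1+\alpha_n^2)^{1/2}$ handled so that the $\alpha_n$-weights match the $H^1$ norm—one obtains
\[
(1+\alpha_n^2)^{1/2}|u^{(n)}(b)|^2
\le \frac{1}{b-a}\int_a^b |u^{(n)}(y)|^2\,\mathrm{d}y
+\int_a^b \Bigl(\alpha_n^2|u^{(n)}(y)|^2+|{u^{(n)}}'(y)|^2\Bigr)\mathrm{d}y .
\]
Summing over $n$, multiplying by $\Lambda$, and using Parseval together with $\|u\|_{H^1(\tilde\Omega)}\le\|u\|_{H^1(\Omega)}$ yields $\|u\|_{H^{1/2}(\Gamma)}^2\le\bigl(1+(b-a)^{-1}\bigr)\|u\|_{H^1(\Omega)}^2$; squaring and adding the two components gives the stated bound with $\gamma_2=(1+(b-a)^{-1})^{1/2}$.

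The main technical point to get right is the bookkeeping of the $\alpha_n$-weights in the Young-type splitting so that the cross term $\mathrm{Re}\,\bigl((1+\alpha_n^2)^{1/2}w'(y)\,u^{(n)}\overline{{u^{(n)}}'}\bigr)$ (and the $w'$ term) are absorbed into exactly the combination $\alpha_n^2|u^{(n)}|^2+|{u^{(n)}}'|^2$ that reconstitutes $\|u\|_{H^1}^2$ after Parseval, with no stray constants larger than those claimed; choosing the weight $w$ affine with $w'\equiv(b-a)^{-1}$ is what produces precisely the factor $(b-a)^{-1}$. A second small point is justifying $u^{(n)}(a)=0$: this follows because $u\in H^1_{S,\mathrm{qp}}(\Omega)$ vanishes on $S$ and $a=\min\{y:\boldsymbol{x}\in S\}$, so extending $u$ by zero to the slab below $S$ keeps it in $H^1$ of the enlarged strip, making the trace on $\{y=a\}$ well defined and zero. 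Once these are in place the argument is routine.
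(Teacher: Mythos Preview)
Your plan is essentially the paper's own proof: the paper writes the one-dimensional identity as $(b-a)|u(b)|^2=\int_a^b|u|^2\,\mathrm{d}y+\int_a^b\int_y^b\tfrac{\mathrm{d}}{\mathrm{d}t}|u(t)|^2\,\mathrm{d}t\,\mathrm{d}y$, and by Fubini the double integral equals $\int_a^b(t-a)\tfrac{\mathrm{d}}{\mathrm{d}t}|u|^2\,\mathrm{d}t$, i.e.\ exactly your affine weight $w(t)=(t-a)/(b-a)$; then zero-extension to the slab $\tilde\Omega$, Fourier in $x$, Young with $\epsilon=(1+\alpha_n^2)^{1/2}$, and summation finish it.

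Two small corrections. First, the geometry: $\tilde\Omega\supset\Omega$ (not $\subset$), and $S$ lies \emph{on or above} $y=a$; more importantly, since $w(a)=0$ the identity needs no boundary value at $y=a$, so your discussion of $u^{(n)}(a)=0$ is unnecessary --- the paper never invokes it. Second, your displayed mode-wise bound is not what the weight argument actually yields: after multiplying by $(1+\alpha_n^2)^{1/2}$ the $w'$-term still carries the factor $(1+\alpha_n^2)^{1/2}$, and the Young split of the cross term produces $(1+\alpha_n^2)\int|u^{(n)}|^2$, not $\alpha_n^2\int|u^{(n)}|^2$. The correct inequality (which the paper records) is
\[
(1+\alpha_n^2)^{1/2}|u^{(n)}(b)|^2\le \gamma_2^2\,(1+\alpha_n^2)\!\int_a^b|u^{(n)}|^2\,\mathrm{d}y+\int_a^b|(u^{(n)})'|^2\,\mathrm{d}y,
\]
and this sums to exactly $\gamma_2=(1+(b-a)^{-1})^{1/2}$, so your bookkeeping worry is the right one to flag but easy to resolve.
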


\begin{proof}
First we have
\begin{align*}
 (b-a)|u(b)|^2&=\int_a^b|u(y)|^2{\rm
d}y+\int_a^b\int_y^b\frac{\rm d}{{\rm d}t}|u(t)|^2{\rm d}t{\rm d}y\\
&\leq \int_a^b|u(y)|^2{\rm d}y +(b-a)\int_a^b 2|u(y)|
|u'(y)|{\rm d}y,
\end{align*}
which gives by applying the Cauchy--Schwarz inequality that
\[
 (1+\alpha_n^2)^{1/2}|u(b)|^2\leq \gamma_2^2 (1+\alpha_n^2)\int_a^b |u(y)|^2{\rm
d}y+\int_a^b |u'(y)|^2{\rm d}y.
\]
Given $\boldsymbol{u}\in H^1_{S, \rm qp}(\Omega)^2$, we consider the zero
extension
\[
 \tilde{\boldsymbol u}=\begin{cases}
                        \boldsymbol{u}&\quad\text{in} ~ \Omega,\\
                        0 &\quad\text{in} ~ \tilde{\Omega}\setminus\bar{\Omega},
                       \end{cases}
\]
which has the Fourier series expansion
\[
 \tilde{\boldsymbol u}(x, y)=\sum_{n\in\mathbb{Z}}\tilde{\boldsymbol
u}^{(n)}(y)e^{{\rm i}\alpha_n x}\quad\text{in} ~ \tilde{\Omega}.
\]
By definitions, we have
\[
 \|\tilde{\boldsymbol u}\|^2_{H^{1/2}(\Gamma)^2}=\Lambda\sum_{n\in\mathbb{Z}}
(1+\alpha_n^2)^{1/2}|\tilde{\boldsymbol u}^{(n)} (b)|^2
\]
and
\[
 \|\tilde{\boldsymbol
u}\|^2_{H^1(\tilde{\Omega})^2}=\Lambda\sum_{n\in\mathbb{Z}} \int_a^b
(1+\alpha_n^2)|\tilde{\boldsymbol u}^{(n)}(y)|^2+|\boldsymbol{u}^{(n)'} (y)|^2
{\rm d}y.
\]
Noting $\|\boldsymbol{u}\|_{H^{1/2}(\Gamma)^2}=\|\tilde{\boldsymbol
u}\|_{H^{1/2}(\Gamma)^2}$ and
$\|\boldsymbol{u}\|_{H^1(\Omega)^2}=\|\tilde{\boldsymbol
u}\|_{H^1(\tilde{\Omega})^2}$, we complete the proof by combining the above
estimates.
\end{proof}

\begin{theo}\label{se}
 Let $\gamma_1$ and $\gamma_2$ be the constants in the inf-sup condition
\eqref{ifc} and in Lemma \ref{tr}, respectively. If
$\hat{F}\gamma^2_2<\gamma_1$, then the PML variational problem
\eqref{cwp} has a unique weak solution $\boldsymbol{u}^{\rm PML}$, which
satisfies the error estimate
\begin{align}\label{ee}
 \|\boldsymbol{u}-\boldsymbol{u}^{\rm PML}\|_\Omega:=\sup_{0\neq
\boldsymbol{v}\in H^1_{S,
\rm qp}(\Omega)^2}\frac{|a(\boldsymbol{u}-\boldsymbol{u}^{\rm PML},
\boldsymbol{v})|}{\|\boldsymbol{v}\|_{H^1(\Omega)^2}}
\leq \hat{F}\gamma_2 \|\boldsymbol{u}^{\rm PML}-\boldsymbol{u}_{\rm
inc}\|_{L^2(\Gamma)^2},
\end{align}
where $\boldsymbol{u}$ is the unique weak solution of the variational problem
\eqref{wp}.
\end{theo}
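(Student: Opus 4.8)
The plan is to treat $\mathscr{T}^{\rm PML}$ as a compact perturbation of $\mathscr{T}$ and combine the inf-sup condition \eqref{ifc} with the bound in Lemma \ref{boe}, in the spirit of the classical argument for PML well-posedness. First I would observe that the two sesquilinear forms differ only in the boundary term, namely $a^{\rm PML}(\boldsymbol{u},\boldsymbol{v})=a(\boldsymbol{u},\boldsymbol{v})-\langle(\mathscr{T}^{\rm PML}-\mathscr{T})\boldsymbol{u},\boldsymbol{v}\rangle_\Gamma$ for all $\boldsymbol{u},\boldsymbol{v}\in H^1_{S,\rm qp}(\Omega)^2$. Then for any fixed $\boldsymbol{u}$, using \eqref{ifc} pick $\boldsymbol{v}$ nearly attaining the supremum for $a(\boldsymbol{u},\cdot)$, and estimate the extra term by Lemma \ref{boe} followed by the trace inequality of Lemma \ref{tr}: $|\langle(\mathscr{T}^{\rm PML}-\mathscr{T})\boldsymbol{u},\boldsymbol{v}\rangle_\Gamma|\le\hat F\|\boldsymbol{u}\|_{L^2(\Gamma)^2}\|\boldsymbol{v}\|_{L^2(\Gamma)^2}\le\hat F\gamma_2^2\|\boldsymbol{u}\|_{H^1(\Omega)^2}\|\boldsymbol{v}\|_{H^1(\Omega)^2}$. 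This yields the perturbed inf-sup estimate
\[
\sup_{0\neq\boldsymbol{v}\in H^1_{S,\rm qp}(\Omega)^2}\frac{|a^{\rm PML}(\boldsymbol{u},\boldsymbol{v})|}{\|\boldsymbol{v}\|_{H^1(\Omega)^2}}\ge(\gamma_1-\hat F\gamma_2^2)\|\boldsymbol{u}\|_{H^1(\Omega)^2},
\]
which is a genuine (strictly positive) lower bound precisely under the hypothesis $\hat F\gamma_2^2<\gamma_1$. Since $a^{\rm PML}$ also differs from $a$ by a finite-rank-plus-bounded perturbation on the compact trace, the relevant Fredholm/Babu\v{s}ka theory gives that the adjoint inf-sup condition holds as well, so $a^{\rm PML}$ satisfies the hypotheses guaranteeing existence and uniqueness of $\boldsymbol{u}^{\rm PML}\in H^1_{S,\rm qp}(\Omega)^2$ solving \eqref{cwp}.

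For the error estimate \eqref{ee}, I would subtract the two variational identities. Since $\boldsymbol{u}$ solves \eqref{wp} with right-hand side $\langle\boldsymbol{f},\boldsymbol{v}\rangle_\Gamma$ where $\boldsymbol{f}=\mathscr{D}\boldsymbol{u}_{\rm inc}-\mathscr{T}\boldsymbol{u}_{\rm inc}$, and $\boldsymbol{u}^{\rm PML}$ solves \eqref{cwp} with right-hand side $\langle\boldsymbol{f}^{\rm PML},\boldsymbol{v}\rangle_\Gamma$ where $\boldsymbol{f}^{\rm PML}=\mathscr{D}\boldsymbol{u}_{\rm inc}-\mathscr{T}^{\rm PML}\boldsymbol{u}_{\rm inc}$, I get for every $\boldsymbol{v}$
\[
a(\boldsymbol{u}-\boldsymbol{u}^{\rm PML},\boldsymbol{v})=a(\boldsymbol{u},\boldsymbol{v})-a(\boldsymbol{u}^{\rm PML},\boldsymbol{v})=\langle\boldsymbol{f},\boldsymbol{v}\rangle_\Gamma-\langle\boldsymbol{f}^{\rm PML},\boldsymbol{v}\rangle_\Gamma+\bigl(a^{\rm PML}(\boldsymbol{u}^{\rm PML},\boldsymbol{v})-a(\boldsymbol{u}^{\rm PML},\boldsymbol{v})\bigr).
\]
The first difference equals $-\langle(\mathscr{T}-\mathscr{T}^{\rm PML})\boldsymbol{u}_{\rm inc},\boldsymbol{v}\rangle_\Gamma=\langle(\mathscr{T}^{\rm PML}-\mathscr{T})\boldsymbol{u}_{\rm inc},\boldsymbol{v}\rangle_\Gamma$, and the second difference equals $-\langle(\mathscr{T}^{\rm PML}-\mathscr{T})\boldsymbol{u}^{\rm PML},\boldsymbol{v}\rangle_\Gamma$; adding them telescopes to $-\langle(\mathscr{T}^{\rm PML}-\mathscr{T})(\boldsymbol{u}^{\rm PML}-\boldsymbol{u}_{\rm inc}),\boldsymbol{v}\rangle_\Gamma$. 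Now apply Lemma \ref{boe} to bound this by $\hat F\|\boldsymbol{u}^{\rm PML}-\boldsymbol{u}_{\rm inc}\|_{L^2(\Gamma)^2}\|\boldsymbol{v}\|_{L^2(\Gamma)^2}$ and then Lemma \ref{tr} to replace $\|\boldsymbol{v}\|_{L^2(\Gamma)^2}$ by $\gamma_2\|\boldsymbol{v}\|_{H^1(\Omega)^2}$. Dividing by $\|\boldsymbol{v}\|_{H^1(\Omega)^2}$ and taking the supremum over $\boldsymbol{v}$ gives exactly \eqref{ee}.

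The main obstacle is the well-posedness half rather than the error bound: one must carefully justify that the perturbed inf-sup constant $\gamma_1-\hat F\gamma_2^2>0$ together with the compactness structure of the boundary operators actually implies invertibility (injectivity plus the adjoint bound, or equivalently a Fredholm-alternative argument), since an inf-sup lower bound for the primal form alone does not by itself guarantee surjectivity in general. Here the fact that $a$ already satisfies \eqref{ifc} with the same-type adjoint estimate (from the cited general theory of \cite{ba-73}), and that $\mathscr{T}^{\rm PML}-\mathscr{T}$ factors through the compact embedding $H^1_{S,\rm qp}(\Omega)^2\hookrightarrow L^2(\Gamma)^2$, is what makes the perturbation argument go through; I would state this explicitly and invoke \cite{ba-73} once more. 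The error estimate itself is then a short and essentially mechanical computation once the telescoping identity above is in hand.
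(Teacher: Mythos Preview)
Your proposal is correct and follows essentially the same route as the paper: both derive the perturbed inf-sup bound $\sup_{\boldsymbol v}|a^{\rm PML}(\boldsymbol u,\boldsymbol v)|/\|\boldsymbol v\|_{H^1}\ge(\gamma_1-\hat F\gamma_2^2)\|\boldsymbol u\|_{H^1}$ from Lemmas~\ref{boe} and~\ref{tr}, and both obtain \eqref{ee} via the same telescoping identity $a(\boldsymbol u-\boldsymbol u^{\rm PML},\boldsymbol v)=\langle(\mathscr T-\mathscr T^{\rm PML})(\boldsymbol u^{\rm PML}-\boldsymbol u_{\rm inc}),\boldsymbol v\rangle_\Gamma$. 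Your explicit remark that the primal inf-sup alone does not give surjectivity, and that one must invoke the adjoint inf-sup (or Fredholm structure) via \cite{ba-73}, is in fact a point the paper glosses over by calling the inf-sup estimate ``coercivity''.
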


\begin{proof}
It suffices to show the coercivity of the sesquilinear form $a^{\rm PML}$
defined in \eqref{csf} in order to prove the unique solvability of the weak
problem \eqref{cwp}. Using Lemmas \ref{boe}, \ref{tr} and the assumption
$\hat{F}\gamma^2_2<\gamma_1$, we get for any $\boldsymbol{u},
\boldsymbol{v}$ in $H^1_{S, \rm qp}(\Omega)^2$ that
\begin{align*}
 |a^{\rm PML}(\boldsymbol{u}, \boldsymbol{v})|&\geq |a(\boldsymbol{u},
\boldsymbol{v})|-|\langle (\mathscr{T}^{\rm PML}-\mathscr{T})\boldsymbol{u},
\boldsymbol{v}\rangle_{\Gamma}|\\
&\geq|a(\boldsymbol{u}, \boldsymbol{v})|-\hat{F}\gamma_2^2\|\boldsymbol{u}\|_{
H^1(\Omega)^
2} \|\boldsymbol{v}\|_{H^1(\Omega)^2}\\
&\geq\bigl(\gamma_1-\hat{F}\gamma_2^2\bigr)\|\boldsymbol{u}\|_{
H^1(\Omega)^2}\|\boldsymbol{v}\|_{H^1(\Omega)^2}.
\end{align*}
It remains to show the error estimate \eqref{ee}. It follows from
\eqref{wp}--\eqref{sf} and \eqref{cwp}--\eqref{csf} that
\begin{align*}
 a(\boldsymbol{u}-\boldsymbol{u}^{\rm PML},
\boldsymbol{v})&=a(\boldsymbol{u}, \boldsymbol{v})-a(\boldsymbol{u}^{\rm PML},
\boldsymbol{v})\\
&=\langle\boldsymbol{f},
\boldsymbol{v}\rangle_{\Gamma}-\langle\boldsymbol{f}^{\rm PML},
\boldsymbol{v}\rangle_{\Gamma}+a^{\rm PML}(\boldsymbol{u}^{\rm PML},
\boldsymbol{v})-a(\boldsymbol{u}^{\rm PML}, \boldsymbol{v})\\
&=\langle (\mathscr{T}^{\rm PML}-\mathscr{T})\boldsymbol{u}_{\rm inc},
\boldsymbol{v}\rangle_{\Gamma}-\langle (\mathscr{T}^{\rm
PML}-\mathscr{T})\boldsymbol{u}^{\rm PML},
\boldsymbol{v}\rangle_{\Gamma}\\
&=\langle (\mathscr{T}-\mathscr{T}^{\rm PML})(\boldsymbol{u}^{\rm
PML}-\boldsymbol{u}_{\rm inc}), \boldsymbol{v}\rangle_{\Gamma},
\end{align*}
which completes the proof upon using Lemmas \ref{boe} and \ref{tr}.
\end{proof}

We remark that the error estimate \eqref{ee} is a posteriori in nature as it
depends only on the PML solution $\boldsymbol{u}^{\rm PML}$, which makes a
posteriori error control possible. Moreover, the PML approximation error can be
reduced exponentially by either enlarging the thickness $\delta$ of the PML
layers or enlarging the medium parameters ${\rm Re}\sigma$ and ${\rm
Im}\sigma$.

\section{Finite element approximation}

In this section, we consider the finite element approximation of the PML
problem \eqref{twp} and deduce the a posterior error estimate.

\subsection{The discrete problem}

Let $\mathcal{M}_h$ be a regular triangulation of the domain $D$. Every triangle
$T\in\mathcal{M}_h$ is considered as closed. We assume that any element $T$ must
be completely included in $\overline{\Omega^{\rm PML}}$ or $\overline{\Omega}$.
In order to introduce a finite element space whose functions are quasi-periodic
in the $x$ direction, we require that if $(0, y)$ is a node on the left
boundary, then $(\Lambda, y)$ is also a node on the right boundary, and vice
versa. Let $V_h(D)\subset H^1_{\rm qp}(D)$ be a conforming finite element space,
and $\mathring{V}_h(D)=V_h(D) \cap H^1_{0,\rm qp}(D)$.

Denote by $\Pi_h:C(\bar{D})^2\rightarrow V_h(D)^2$ the Scott--Zhang
interpolation operator, which has the following properties:
\[
  \|\boldsymbol{v}-\Pi_h\boldsymbol{v}\|_{L^2(T)^2}
  \leq Ch_{T}\|\nabla\boldsymbol{v}\|_{F(\tilde{T})},\quad
  \|\boldsymbol{v}-\Pi_h\boldsymbol{v}\|_{L^2(e)^2}
  \leq Ch_{e}^{1/2}\|\nabla\boldsymbol{v}\|_{F(\tilde{e})},
\]
where $h_T$ is the diameter of the triangle $T$, $h_e$ is the length of the
edge $e$, $\tilde T$ and $\tilde e$ are the unions of all elements which have
nonempty intersection with the element $T$ and the edge $e$, respectively, and
the Frobenius norm of the Jacobian matrix $\nabla \boldsymbol{v}$ is defined by
\[
\|\nabla\boldsymbol{v}\|_{F(G)}
=\left(\sum\limits_{j=1}^{2}\int_G|\nabla v_j|^2{\rm
d}\boldsymbol{x}\right)^{1/2}.
\]
The finite element approximation to the problem (\ref{twp}) reads as follows:
Find $\hat{\boldsymbol{u}}_h\in V_h(D)^2$
such that $\hat{\boldsymbol{u}}_h=\Pi_h\boldsymbol{u}_{\rm inc}$ on $\Gamma^{\rm
PML}$, $\hat{\boldsymbol{u}}_h=0$ on $S$, and
\begin{equation}\label{femvp}
  b_D(\hat{\boldsymbol{u}}_h,\boldsymbol{v}_h)
  =-\int_{D}\boldsymbol{g}\cdot\bar{\boldsymbol{v}}_h{\rm d}\boldsymbol{x},
  \quad\forall\ \boldsymbol{v}_h\in\mathring{V}_h(D)^2.
\end{equation}
Following the general theory in \cite{ba-73}, the existence of a
unique solution of the discrete problem (\ref{femvp}) and the finite element
convergence analysis depend on the following discrete inf-sup condition:
\begin{equation}\label{isc}
  \sup\limits_{0\neq\mathring{V}_h(D)^2}
  \frac{|b(\hat{\boldsymbol{u}}_h,\boldsymbol{v}_h)|}
  {\|\boldsymbol{v}_h\|_{H^1(D)^2}}
  \geq \gamma_0\|\hat{\boldsymbol{u}}_h\|_{H^1(D)^2},
  \quad\forall\ \hat{\boldsymbol{u}}_h\in\mathring{V}_h(D)^2,
\end{equation}
where the constant $\gamma_0>0$ is independent of the finite element mesh size.
Since the continuous problem \eqref{twp} has a unique solution by Theorem
\ref{se}, the sesquilinear form $b:H^1_{\rm qp}(D)^2\times H^1_{\rm
qp}(D)^2\rightarrow \mathbb{C}$ satisfies the continuous inf-sup condition. Then
a general argument of Schatz \cite{sch-74} implies that \eqref{isc} is valid
for sufficiently small mesh size $h<h^{*}$. Thanks to \eqref{isc}, an
appropriate a priori error estimate can be derived and the estimate depends on
the regularity of the PML solution $\boldsymbol{u}^{\rm PML}$. We assume that
the discrete problem \eqref{femvp} admits a unique
solution $\hat{\boldsymbol{u}}_h\in V_h(D)^2$, since we are interested in the a
posteriori error estimate and the associated adaptive algorithm.

Denote by $\mathcal{B}_h$ the set of all sides that do not lie on $\Gamma^{\rm
PML}$ and $S$. For any $T\in\mathcal{M}_h$, we introduce the residual
\[
  R_T:=(\mathscr{L}\hat{\boldsymbol{u}}_h+\boldsymbol{g})|_{T}=
  \begin{cases}
    \mathscr{L}(\hat{\boldsymbol{u}}_h-\boldsymbol{u}_{\rm inc})|_T&
    \text{if}\ T\in\Omega^{\rm PML},\\
    \mathscr{L}\hat{\boldsymbol{u}}_h|_T&\text{otherwise}.
  \end{cases}
\]
For any interior side $e\in\mathcal{B}_h$ which is the common side of $T_1$ and
$T_2$, we define the jump residual across $e$ as
\[
J_e=\mathscr{D}_{\boldsymbol\nu}\hat{\boldsymbol{u}}_h|_{T_1}-\mathscr{D}_{
\boldsymbol\nu} \hat{\boldsymbol{u}}_h|_{T_2},
\]
where the unit normal vector $\boldsymbol\nu$ on $e$ points from
$T_2$ to $T_1$ and the differential operator
\[
\mathscr{D}_{\boldsymbol\nu}\boldsymbol{v}=\mu\partial_{\boldsymbol\nu}
\boldsymbol{v}+(\lambda+\mu)(\nabla\cdot\boldsymbol{v})\boldsymbol\nu.
\]
Define
\[
  \Gamma_{\rm left}=\{\boldsymbol x\in\partial D : x=0\},\quad
  \Gamma_{\rm right}=\{\boldsymbol x\in\partial D : x=\Lambda\}.
\]
If $e=\Gamma_{\rm left}\cap\partial T$ for some element $T\in\mathcal{M}_h$
and $e'$ be the corresponding side on $\Gamma_{\rm right}$, which is also a side
for some element $T'$, then we define the jump residual as
\begin{align*}
  J_e=&\Big[\mu\partial_x(\hat{\boldsymbol{u}}_h|_T)
  +(\lambda+\mu)[1, 0]^\top\nabla_{\hat{\boldsymbol{x}}}
  \cdot(\hat{\boldsymbol{u}}_h|_T)\Big]-e^{-{\rm
i}\alpha\Lambda}\Big[\mu\partial_x(\hat{\boldsymbol{u}}_h|_{T'})
  +(\lambda+\mu)[1, 0]^\top\nabla_{\hat{\boldsymbol{x}}}
  \cdot(\hat{\boldsymbol{u}}_h|_{T'})\Big],\\
  J_{e'}=&e^{{\rm
i}\alpha\Lambda}\Big[\mu\partial_x(\hat{\boldsymbol{u}}_h|_T)
  +(\lambda+\mu)[1, 0]^\top\nabla_{\hat{\boldsymbol{x}}}
\cdot(\hat{\boldsymbol{u}}_h|_T)\Big]-\Big[\mu\partial_x(\hat{
\boldsymbol{u}}_h|_{T'})+(\lambda+\mu)[1, 0]^\top\nabla_{\hat{\boldsymbol{x}}}
  \cdot(\hat{\boldsymbol{u}}_h|_{T'})\Big].
\end{align*}
For any $T\in\mathcal{M}_h$, denote by $\eta_T$ the local error estimator:
\[
  \eta_T=h_T\|R_T\|_{L^2(T)^2}+\bigg(\frac{1}{2}\sum\limits_{e\subset \partial
T}h_e\|J_e\|_{L^2(e)^2}^2\bigg)^{1/2}.
\]

The following theorem is the main result of this paper.

\begin{theo}\label{thmerr}
There exists a positive constant $C$ such that the following a posteriori
error estimate holds
  \begin{align*}
    \|\boldsymbol{u}-\hat{\boldsymbol{u}}_h\|_{H^1(\Omega)^2}
    \leq&\gamma_2\hat{F}\|\hat{\boldsymbol{u}}_h-\boldsymbol{u}_{\rm
inc}\|_{L^2(\Gamma)^2}+\gamma_2 C_2\|\Pi_h{\boldsymbol{u}_{\rm
inc}}-\boldsymbol{u}_{\rm inc}\|_{L^2(\Gamma^{\rm PML})^2}\\
&+C(1+\gamma_2C_1)\bigg{(}\sum\limits_{T\in\mathcal{M}_h}\eta_T^2\bigg{)}^{1/2},
  \end{align*}
  where the constants $\hat{F}$, $\gamma_2$, and $C_j$ are defined in Lemmas
\ref{boe}, \ref{tr}, \ref{extend est}, \ref{bext}, respectively.
\end{theo}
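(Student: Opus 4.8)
The plan is to decompose the total error $\|\boldsymbol{u}-\hat{\boldsymbol{u}}_h\|_{H^1(\Omega)^2}$ into a PML modeling error and a finite element discretization error via the triangle inequality
\[
\|\boldsymbol{u}-\hat{\boldsymbol{u}}_h\|_{H^1(\Omega)^2}
\leq \|\boldsymbol{u}-\boldsymbol{u}^{\rm PML}\|_{H^1(\Omega)^2}
+\|\boldsymbol{u}^{\rm PML}-\hat{\boldsymbol{u}}_h\|_{H^1(\Omega)^2}.
\]
For the first term I would invoke Theorem \ref{se}, but with a twist: the error estimate there is stated in the dual norm $\|\cdot\|_\Omega$ involving the sesquilinear form $a$, and controls it by $\hat{F}\gamma_2\|\boldsymbol{u}^{\rm PML}-\boldsymbol{u}_{\rm inc}\|_{L^2(\Gamma)^2}$. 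Using the inf-sup condition \eqref{ifc} the dual norm dominates $\gamma_1\|\boldsymbol{u}-\boldsymbol{u}^{\rm PML}\|_{H^1(\Omega)^2}$; however, since in the final estimate we want $\hat{\boldsymbol{u}}_h$ rather than $\boldsymbol{u}^{\rm PML}$ appearing on the right-hand side, I would instead keep the term as $a(\boldsymbol{u}-\boldsymbol{u}^{\rm PML},\boldsymbol{v})$ and absorb it into the residual/duality argument for the discretization error, replacing $\boldsymbol{u}^{\rm PML}$ by $\hat{\boldsymbol{u}}_h$ at the cost of the Galerkin residual, and estimate $\|\hat{\boldsymbol{u}}_h-\boldsymbol{u}_{\rm inc}\|_{L^2(\Gamma)^2}$ directly. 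This is why $\hat{F}\gamma_2\|\hat{\boldsymbol{u}}_h-\boldsymbol{u}_{\rm inc}\|_{L^2(\Gamma)^2}$ rather than $\|\boldsymbol{u}^{\rm PML}-\boldsymbol{u}_{\rm inc}\|_{L^2(\Gamma)^2}$ shows up in the final bound.

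The core of the argument is a standard residual-based a posteriori analysis for the discretization error. Using the inf-sup condition \eqref{ifc} (or the dual-norm formulation in \eqref{ee}), write
\[
\gamma_1\|\boldsymbol{u}^{\rm PML}-\hat{\boldsymbol{u}}_h\|_{H^1(\Omega)^2}
\leq \sup_{0\neq\boldsymbol{v}}\frac{|a(\boldsymbol{u}^{\rm PML}-\hat{\boldsymbol{u}}_h,\boldsymbol{v})|}{\|\boldsymbol{v}\|_{H^1(\Omega)^2}},
\]
then express $a(\boldsymbol{u}^{\rm PML}-\hat{\boldsymbol{u}}_h,\boldsymbol{v})$ in terms of $b_D$ by the equivalence lemma relating \eqref{cwp} and \eqref{twp}, subtract the discrete equation \eqref{femvp} tested against $\Pi_h\boldsymbol{v}$ (Galerkin orthogonality), and integrate by parts element by element. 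This produces the element residuals $R_T$ and the edge jumps $J_e$ (including the quasi-periodic jumps across $\Gamma_{\rm left}$ and $\Gamma_{\rm right}$ and the natural jumps involving $\mathscr{D}_{\boldsymbol\nu}$), plus boundary contributions on $\Gamma$ from $\mathscr{T}^{\rm PML}$ and on $\Gamma^{\rm PML}$ from the inhomogeneous Dirichlet data $\Pi_h\boldsymbol{u}_{\rm inc}$. Applying the Scott--Zhang interpolation estimates stated above together with Cauchy--Schwarz and the finite-overlap property of the patches $\tilde T,\tilde e$ yields the bound $C\big(\sum_T\eta_T^2\big)^{1/2}\|\boldsymbol{v}\|_{H^1(\Omega)^2}$; the factor $(1+\gamma_2 C_1)$ arises when the trace of $\boldsymbol v$ on $\Gamma$ is bounded via Lemma \ref{tr} and an extension lemma (Lemma \ref{extend est}, constant $C_1$), and the $\gamma_2 C_2$ factor arises from bounding the $\Gamma^{\rm PML}$-data term using the extension Lemma \ref{bext} (constant $C_2$) together with Lemma \ref{tr}.

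The main obstacle, and the step requiring the most care, is the treatment of the transparent-boundary term $\langle\mathscr{T}^{\rm PML}\hat{\boldsymbol{u}}_h,\boldsymbol{v}\rangle_\Gamma$ together with the domain-truncation bookkeeping: the a posteriori estimate is posed on $\Omega$ while the discrete problem lives on $D=\Omega\cup\Omega^{\rm PML}$, so one must carefully convert the $b_D$-based residual (which includes contributions from elements inside $\Omega^{\rm PML}$ and from the artificial interface $\Gamma$) into the $a^{\rm PML}$-based residual on $\Omega$ plus a $\Gamma$-defect that is itself controlled by the element and jump residuals in $\Omega^{\rm PML}$. This is precisely where the extension operators of Lemmas \ref{extend est} and \ref{bext} enter: they let one lift a test function supported near $\Gamma$ (respectively $\Gamma^{\rm PML}$) into $D$ so that Galerkin orthogonality can be applied and the boundary defect re-expressed through interior residuals. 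Once these identifications are made, the remaining steps are the routine scaled interpolation estimates; I would present the $\Gamma^{\rm PML}$ Dirichlet-data term and the $\Gamma$ transparent-boundary term first, since they are the nonstandard ingredients, and then collect the interior residual terms.
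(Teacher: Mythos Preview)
Your outline has the right ingredients, but it differs from the paper's route in one structural way and is vague at precisely the point that makes the argument go through.

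The paper does not split via the triangle inequality. It proves a single error representation formula (Lemma~\ref{erf}) for $a(\boldsymbol{u}-\hat{\boldsymbol{u}}_h,\boldsymbol{v})$ directly, in which the term $\langle(\mathscr T-\mathscr T^{\rm PML})(\hat{\boldsymbol{u}}_h-\boldsymbol u_{\rm inc}),\boldsymbol v\rangle_\Gamma$ already carries $\hat{\boldsymbol{u}}_h$, so your ``absorb it later'' step is unnecessary. The derivation writes $a(\boldsymbol u-\hat{\boldsymbol u}_h,\boldsymbol v)=a(\boldsymbol u-\hat{\boldsymbol u},\boldsymbol v)+a(\hat{\boldsymbol u}-\hat{\boldsymbol u}_h,\boldsymbol v)$, converts the second piece to $a^{\rm PML}$ plus a $(\mathscr T-\mathscr T^{\rm PML})$ correction (which combines with the first piece to replace $\hat{\boldsymbol u}$ by $\hat{\boldsymbol u}_h$), and then passes from $a^{\rm PML}$ on $\Omega$ to $b_D$ on all of $D$.

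That last passage is where your description of the extension is off. The extension is not a lift of a test function ``supported near $\Gamma$'': every $\boldsymbol v\in H^1_{S,\rm qp}(\Omega)^2$ is extended to $\tilde{\boldsymbol v}\in H^1_{0,\rm qp}(D)^2$ by requiring $\bar{\tilde{\boldsymbol v}}$ to \emph{solve the PML Navier equation} \eqref{extend} in $\Omega^{\rm PML}$ with Dirichlet data $\boldsymbol v|_\Gamma$ on $\Gamma$ and zero on $\Gamma^{\rm PML}$. Two things hinge on this specific choice and would fail for a generic lifting: the duality identity $\langle\mathscr T^{\rm PML}\boldsymbol w,\boldsymbol v\rangle_\Gamma=\langle\boldsymbol w,\mathscr D\bar{\tilde{\boldsymbol v}}\rangle_\Gamma$ of Lemma~\ref{dp}, and the fact that integrating $b_{\Omega^{\rm PML}}(\,\cdot\,,\tilde{\boldsymbol v})$ by parts leaves only boundary fluxes because $\mathscr L\bar{\tilde{\boldsymbol v}}=0$ in the layer. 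These two facts are exactly what convert $a^{\rm PML}(\hat{\boldsymbol u}-\hat{\boldsymbol u}_h,\boldsymbol v)$ into $b_D(\hat{\boldsymbol u}-\hat{\boldsymbol u}_h,\tilde{\boldsymbol v})$ minus a $\Gamma^{\rm PML}$-flux term; after that, Galerkin orthogonality on $D$ with $\boldsymbol v_h=\Pi_h\tilde{\boldsymbol v}$ and the Scott--Zhang estimates finish the residual bound. The factor $(1+\gamma_2 C_1)$ then comes from $\|\nabla\tilde{\boldsymbol v}\|_{F(D)}\le\|\nabla\boldsymbol v\|_{F(\Omega)}+\|\nabla\tilde{\boldsymbol v}\|_{F(\Omega^{\rm PML})}$ together with Lemma~\ref{extend est}, and the $\gamma_2 C_2$ from Lemma~\ref{bext} applied to the $\Gamma^{\rm PML}$-flux $\mathscr D_{\hat{\boldsymbol x}}\bar{\tilde{\boldsymbol v}}$. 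Once you replace your informal description of the extension by this PML-solving one and invoke Lemma~\ref{dp}, the remainder of your plan matches the paper.
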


\subsection{A posteriori error analysis}

For any $\boldsymbol{v}\in H^1_{\rm qp}(\Omega)^2$, we denote by
$\tilde{\boldsymbol v}$ the extension of $\boldsymbol{v}$ such that
$\tilde{\boldsymbol v}=\boldsymbol{v}$ in $\Omega$ and $\tilde{\boldsymbol v}$
satisfies the following boundary value problem
\begin{equation}\label{extend}
\begin{cases}
  \mu\Delta_{\hat{\boldsymbol{x}}}\bar{\tilde{\boldsymbol v}}
  +(\lambda+\mu)\nabla_{\hat{\boldsymbol{x}}}\nabla_{\hat{\boldsymbol{x}}}\cdot
  \bar{\tilde{\boldsymbol v}}+\omega^2\bar{\tilde{\boldsymbol v}}=0
  &\quad\text{in}\, \Omega^{\rm PML},\\
  \tilde{\boldsymbol v}(x,b)=\boldsymbol{v}(x,b) &\quad\text{on} ~
\Gamma,\\
  \tilde{\boldsymbol v}(x,b+\delta)=0 &\quad\text{on} ~ \Gamma^{\rm PML}.
\end{cases}
\end{equation}

\begin{lemm}\label{dp}
For any $\boldsymbol{u}$, $\boldsymbol{v}\in H^1_{\rm qp}(\Omega)^2$ we have
\[
\int_{\Gamma}\mathscr{T}^{\rm PML}\boldsymbol{u}\cdot\bar{\boldsymbol{v}}{\rm
d}x=\int_{\Gamma}\boldsymbol{u}\cdot \mathscr{D}\bar{\tilde{\boldsymbol v}}{\rm
d}x.
\]
\end{lemm}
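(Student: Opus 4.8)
The plan is to identify $\mathscr{T}^{\rm PML}\boldsymbol{u}$ with the co-normal trace of the PML extension of $\boldsymbol{u}|_{\Gamma}$ and then invoke a Green (reciprocity) identity for the PML operator $\mathscr{L}$ in the slab $\Omega^{\rm PML}$, in which $\bar{\tilde{\boldsymbol v}}$ is a solution by construction. Concretely, for $\boldsymbol{u}\in H^1_{\rm qp}(\Omega)^2$ let $\boldsymbol{w}$ be the quasi-periodic function on $\Omega^{\rm PML}$ (with the same Bloch phase as $\boldsymbol{u}$, and depending only on $\boldsymbol{u}|_{\Gamma}$) solving \eqref{cvne}, equivalently $\mathscr{L}\boldsymbol{w}=0$, with $\boldsymbol{w}=\boldsymbol{u}$ on $\Gamma$ and $\boldsymbol{w}=0$ on $\Gamma^{\rm PML}$. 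This is exactly the object produced in the construction preceding the lemma: separating variables, the $n$-th mode obeys \eqref{code}, the two Dirichlet conditions give the linear system \eqref{lev}, and applying $\mathscr{D}$ at $y=b$ yields mode by mode the matrices $\hat{M}^{(n)}$; hence $\mathscr{T}^{\rm PML}\boldsymbol{u}=\mathscr{D}\boldsymbol{w}|_{\Gamma}$ (note $\mathscr{D}$, which contains $\partial_y$, is the genuine co-normal of $\mathscr{L}$ on $\Gamma$ precisely because $\rho\equiv1$ there). Likewise, by its defining problem \eqref{extend}, $\bar{\tilde{\boldsymbol v}}$ is precisely the extension of $\bar{\boldsymbol v}|_{\Gamma}$ of this same type, with zero trace on $\Gamma^{\rm PML}$. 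Thus the claim reduces to the boundary reciprocity $\int_{\Gamma}(\mathscr{D}\boldsymbol{w})\cdot\bar{\boldsymbol v}\,{\rm d}x=\int_{\Gamma}\boldsymbol{u}\cdot(\mathscr{D}\bar{\tilde{\boldsymbol v}})\,{\rm d}x$.

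The heart of the matter is the Green identity: for quasi-periodic $\boldsymbol{p},\boldsymbol{q}$ smooth on $\overline{\Omega^{\rm PML}}$ with opposite Bloch phases (so that all products $p_iq_j$ are $\Lambda$-periodic in $x$),
\[
\int_{\Omega^{\rm PML}}\bigl[(\mathscr{L}\boldsymbol{p})\cdot\boldsymbol{q}-\boldsymbol{p}\cdot(\mathscr{L}\boldsymbol{q})\bigr]\,{\rm d}\boldsymbol{x}=\int_{\Gamma}\bigl[\boldsymbol{p}\cdot(\mathscr{D}\boldsymbol{q})-(\mathscr{D}\boldsymbol{p})\cdot\boldsymbol{q}\bigr]\,{\rm d}x+I_{\Gamma^{\rm PML}}+I_{\rm lat}.
\]
Since each scalar block of $\mathscr{L}$ ($\partial_x(\rho\,\partial_x\cdot)$, $\partial_y(\rho^{-1}\partial_y\cdot)$, $\partial^2_{xy}$, and multiplication by $\omega^2\rho$) is formally self-adjoint for the conjugate-free pairing $\int\boldsymbol p\cdot\boldsymbol q$, and the $(\lambda+\mu)\partial^2_{xy}$ coupling enters symmetrically in the two components, the left-hand side is a pure boundary integral, evaluated by integration by parts. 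On $\Gamma$ (where $\rho=1$) the $\partial_y(\rho^{-1}\partial_y\cdot)$ blocks contribute $\int_{\Gamma}[\mu(p_1\partial_y q_1-q_1\partial_y p_1)+(\lambda+2\mu)(p_2\partial_y q_2-q_2\partial_y p_2)]\,{\rm d}x$, the coupling blocks contribute $(\lambda+\mu)\int_{\Gamma}(p_2\partial_x q_1+p_1\partial_x q_2)\,{\rm d}x$, and since $\int_{\Gamma}\partial_x(p_1q_2)\,{\rm d}x=0$ by periodicity the latter equals $(\lambda+\mu)\int_{\Gamma}(p_2\partial_x q_1-q_2\partial_x p_1)\,{\rm d}x$; assembling these gives exactly $\int_{\Gamma}[\boldsymbol p\cdot\mathscr D\boldsymbol q-\mathscr D\boldsymbol p\cdot\boldsymbol q]\,{\rm d}x$ (the $\partial_x(\rho\partial_x\cdot)$ and $\omega^2\rho$ blocks contribute nothing on $\Gamma$). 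Each summand of $I_{\Gamma^{\rm PML}}$ carries an undifferentiated component of $\boldsymbol p$ or $\boldsymbol q$, and $I_{\rm lat}$ (over $\Gamma_{\rm left}\cup\Gamma_{\rm right}$) is a sum of differences of $\Lambda$-periodic quantities evaluated at $x=0$ and $x=\Lambda$, hence vanishes.

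To finish, apply this with $\boldsymbol p=\boldsymbol w$ (Bloch phase $\alpha$) and $\boldsymbol q=\bar{\tilde{\boldsymbol v}}$ (Bloch phase $-\alpha$, since conjugation flips the phase). Both solve $\mathscr{L}(\cdot)=0$ in $\Omega^{\rm PML}$, so the left-hand side vanishes; $\boldsymbol w=\bar{\tilde{\boldsymbol v}}=0$ on $\Gamma^{\rm PML}$ kills $I_{\Gamma^{\rm PML}}$, and the opposite phases kill $I_{\rm lat}$. Thus $\int_{\Gamma}(\mathscr D\boldsymbol w)\cdot\bar{\tilde{\boldsymbol v}}\,{\rm d}x=\int_{\Gamma}\boldsymbol w\cdot(\mathscr D\bar{\tilde{\boldsymbol v}})\,{\rm d}x$, and substituting $\boldsymbol w|_{\Gamma}=\boldsymbol u|_{\Gamma}$, $\bar{\tilde{\boldsymbol v}}|_{\Gamma}=\bar{\boldsymbol v}|_{\Gamma}$ together with the reduction above gives the lemma. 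For general $\boldsymbol u,\boldsymbol v\in H^1_{\rm qp}(\Omega)^2$ one first argues with trigonometric-polynomial traces on $\Gamma$ (where the extensions are smooth and the integrations by parts are legitimate) and then passes to the limit, since both sides are continuous bilinear forms on $H^{1/2}(\Gamma)^2\times H^{1/2}(\Gamma)^2$. The main obstacle is the boundary bookkeeping in the Green identity: getting the $\Gamma$-term to assemble into the pairing with $\mathscr D$ requires carefully handling the mixed $(\lambda+\mu)\partial^2_{xy}$ terms via the periodicity identity $\int_{\Gamma}\partial_x(p_1q_2)\,{\rm d}x=0$, and keeping track that $\rho\equiv1$ on $\Gamma$ but not on $\Gamma^{\rm PML}$ (harmless, as that term drops out). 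Equivalently, the identity encodes the algebraic facts $\hat m^{(n)}_{12}+\hat m^{(n)}_{21}=0$ and the parity of the entries of $\hat M^{(n)}$ under $\alpha_n\mapsto-\alpha_n$, which could instead be checked directly from the explicit formulas, but the reciprocity route avoids that computation.
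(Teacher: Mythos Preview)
Your proposal is correct and follows essentially the same approach as the paper: introduce the PML extension $\boldsymbol w$ of $\boldsymbol u|_{\Gamma}$, identify $\mathscr{T}^{\rm PML}\boldsymbol u=\mathscr{D}\boldsymbol w|_{\Gamma}$, and use a Green identity for the PML operator in $\Omega^{\rm PML}$ together with the zero traces on $\Gamma^{\rm PML}$. The only stylistic difference is that the paper passes through the symmetric first Green form in stretched coordinates (so the reciprocity is manifest in one line), whereas you work out the second Green identity for $\mathscr{L}$ directly, which forces the explicit bookkeeping of the mixed $(\lambda+\mu)\partial^2_{xy}$ terms and the Bloch-phase cancellation on the lateral boundaries; both routes yield the same conclusion.
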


\begin{proof}
Introduce a function $\hat{\boldsymbol{w}}\in H_{\rm qp}^1(\Omega^{\rm
PML})^2$ which satisfies
\[
\begin{cases}
\mu\Delta_{\hat{\boldsymbol{x}}}\hat{\boldsymbol{w}}+(\lambda+\mu)\nabla_{\hat{
\boldsymbol{x}}}\nabla_{\hat{\boldsymbol{x}}}\cdot\hat{\boldsymbol{w}}
+\omega^2\hat{\boldsymbol{w}}=0 &\quad\text{in} ~ \Omega^{\rm PML},\\
\hat{\boldsymbol{w}}(x,b)=\boldsymbol{u}(x,b)&\quad\text{on} ~ \Gamma,\\
\hat{\boldsymbol{w}}(x,b+\delta)=0&\quad\text{on} ~ \Gamma^{\rm PML}.
\end{cases}
\]
Using the definitions of the operators $\mathscr{T}^{\rm PML}$ and
$\mathscr{D}$, we have
\[
\mathscr{T}^{\rm PML}\boldsymbol{u}=\mathscr{D}\hat{\boldsymbol{w}}\quad
\text{on} ~ \Gamma.
\]
On the other hand, it follows from Green's formula and the extension that
  \begin{align*}
    \int_{\Gamma}\boldsymbol{u}\cdot\mathscr{D}
     \bar{\tilde{\boldsymbol v}}{\rm d}x&
     =\int_{\Gamma}\hat{\boldsymbol{w}}\cdot
\mathscr{D}\bar{\tilde{\boldsymbol v}}{\rm d}x=-\int_{\Omega^{\rm PML}}
  \Big[\mu\nabla_{\hat{\boldsymbol{x}}}\bar{\tilde{\boldsymbol v}}:
  \nabla_{\hat{\boldsymbol{x}}}\hat{\boldsymbol{w}}
+(\lambda+\mu)(\nabla_{\hat{\boldsymbol{x}}}\cdot\bar{\tilde{\boldsymbol
v }})(\nabla_{\hat{\boldsymbol{x}}}\cdot\hat{\boldsymbol{w}})
-\omega^2\bar{\tilde{\boldsymbol v}}\cdot\hat{\boldsymbol{w}}\Big]{\rm
d}\boldsymbol x \\
&=\int_{\Omega^{\rm
PML}}\Big[\mu\Delta_{\hat{\boldsymbol{x}}}\hat{\boldsymbol{w} }
  +(\lambda+\mu)\nabla_{\hat{\boldsymbol{x}}}\nabla_{\hat{\boldsymbol{x}}}\cdot
\hat{\boldsymbol{w}}+\omega^2\hat{\boldsymbol{w}}\Big]\cdot\bar{\tilde{
\boldsymbol v}}{\rm d}\boldsymbol x
+\int_{\Gamma}\mathscr{D}\hat{\boldsymbol{w}}\cdot\bar{\tilde{\boldsymbol
v}}{\rm d}x\\
&=\int_{\Gamma}\mathscr{D}\hat{\boldsymbol{w}}\cdot\bar{\tilde{\boldsymbol
v}}{\rm d}x=\int_{\Gamma}\mathscr{T}^{\rm
PML}\boldsymbol{u}\cdot\bar{\tilde{\boldsymbol v}}{\rm d}x,
\end{align*}
which completes the proof.
\end{proof}

Define $\mathring{H}^1_{\rm qp}(D)=\{v\in H^1_{\rm qp}(D): v=0 ~ \text{on} ~
\Gamma^{\rm PML}\}$. The following two lemmas are concerned with the stability
of the extension. The proofs are given in Appendix.

\begin{lemm}\label{extend est}
Let $\boldsymbol{v}\in H^1_{\rm qp}(\Omega)^2$ and $\tilde{\boldsymbol v}\in
\mathring{H}^1_{\rm qp}(D)^2$ be its extension satisfying \eqref{extend}. Then
there exists a positive constant $C_1$ such that
\[
\|\nabla\tilde{\boldsymbol v}\|_{F(\Omega^{\rm PML})}\leq
\gamma_2 C_1\|\boldsymbol{v}\|_{H^1(\Omega)^2},
\]
\end{lemm}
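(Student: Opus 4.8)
The plan is to use the explicit Fourier representation of the extension that was obtained in the derivation of the transparent boundary condition for the PML problem, reduce the claim to a bound on each Fourier mode via Parseval's identity, and sum up with a trace inequality. Concretely, for $\boldsymbol v\in H^1_{\rm qp}(\Omega)^2$ with $\boldsymbol v(x,b)=\sum_{n\in\mathbb Z}\boldsymbol v^{(n)}(b)e^{{\rm i}\alpha_n x}$, the extension $\tilde{\boldsymbol v}$ defined by \eqref{extend} solves, in $\Omega^{\rm PML}$, the same boundary value problem as the field in that derivation, so it is given there by \eqref{cvre} with $\hat v_j^{(n)}(b)$ replaced by $v_j^{(n)}(b)$; thus $\tilde{\boldsymbol v}(x,y)=\sum_{n}\tilde{\boldsymbol v}^{(n)}(y)e^{{\rm i}\alpha_n x}$ where $\tilde{\boldsymbol v}^{(n)}(y)$ is the combination of $e^{\pm{\rm i}\beta_j^{(n)}\int_b^y\rho(\tau){\rm d}\tau}$, $j=1,2$, with coefficients $A_j^{(n)},B_j^{(n)}$. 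Parseval's identity then gives
\[
\|\nabla\tilde{\boldsymbol v}\|_{F(\Omega^{\rm PML})}^2
=\Lambda\sum_{n\in\mathbb Z}\sum_{j=1}^{2}\int_b^{b+\delta}\Bigl(\alpha_n^2\,|\tilde v_j^{(n)}(y)|^2+|{\tilde v_j^{(n)}}'(y)|^2\Bigr){\rm d}y,
\]
so it suffices to bound the $n$-th summand by $C\,(1+\alpha_n^2)^{1/2}\,|\boldsymbol v^{(n)}(b)|^2$ with $C$ independent of $n$ and of the mesh.

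The heart of the proof is this uniform mode bound, and two mechanisms make it close. First, since $\rho_1\ge 1$ and $\rho_2>0$, each outgoing exponential $e^{{\rm i}\beta_j^{(n)}\int_b^y\rho}$ decays in $y$ — through ${\rm Re}\int_b^y\rho$ when $n\notin U_j$ and through ${\rm Im}\int_b^y\rho$ when $n\in U_j$ — whereas, by the non-resonance assumption $\kappa_j\ne|\alpha_n|$ (which keeps $\chi^{(n)}$ and $\hat\chi^{(n)}$ bounded away from $0$), the amplitudes $B_j^{(n)}$ and the quantities $\varepsilon_j^{(n)},\delta_j^{(n)},\eta^{(n)}$ from \eqref{vepsn} are exponentially small in ${\rm Re}\,\zeta$ and ${\rm Im}\,\zeta$. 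Hence the incoming $B$-terms contribute negligibly, and up to such exponentially small corrections $\tilde{\boldsymbol v}^{(n)}(y)$ coincides with the half-space field, i.e.\ \eqref{vre} with $\phi_j^{(n)}(b)$ given by \eqref{pffc} and evaluated at the stretched ordinate $\int_b^y\rho$.

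Second, and this is the key point, by \eqref{pffc} and $|\chi^{(n)}|\gtrsim 1$ the two amplitude vectors in that half-space field are individually of size $\lesssim\alpha_n^2|\boldsymbol v^{(n)}(b)|$, yet their sum at $y=b$ is exactly $\boldsymbol v^{(n)}(b)$. Writing the half-space field as the sum of an amplitude of size $\lesssim\alpha_n^2|\boldsymbol v^{(n)}(b)|$ times $\bigl(e^{{\rm i}\beta_1^{(n)}\int_b^y\rho}-e^{{\rm i}\beta_2^{(n)}\int_b^y\rho}\bigr)$ and the remainder $\boldsymbol v^{(n)}(b)\,e^{{\rm i}\beta_2^{(n)}\int_b^y\rho}$, and using $|\beta_1^{(n)}-\beta_2^{(n)}|\lesssim(\kappa_2^2-\kappa_1^2)/|\alpha_n|$ for large $|n|$, yields the essential gain
\[
\bigl|e^{{\rm i}\beta_1^{(n)}\int_b^y\rho}-e^{{\rm i}\beta_2^{(n)}\int_b^y\rho}\bigr|\lesssim\frac{s}{|\alpha_n|}\,e^{-\Delta_2^{(n)}s},\qquad s:={\rm Re}\!\int_b^y\rho.
\]
Substituting this together with $\int_0^\infty e^{-2\Delta t}{\rm d}t\sim\Delta^{-1}$, $\int_0^\infty t^2e^{-2\Delta t}{\rm d}t\sim\Delta^{-3}$ and $\Delta_2^{(n)}\sim|\alpha_n|$ makes the powers of $\alpha_n$ collapse, so the $n$-th summand in the first display is $\lesssim|\alpha_n|\,|\boldsymbol v^{(n)}(b)|^2$; the bound for ${\tilde v_j^{(n)}}'$ is obtained the same way, and the finitely many modes with $\Delta_j^{(n)}$ not large — including those propagating for only one of $\kappa_1,\kappa_2$ — are bounded crudely and absorbed into $C$.

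Summing over $n$ and using a trace estimate of the type established in Lemma~\ref{tr}, namely $\Lambda\sum_n(1+\alpha_n^2)^{1/2}|\boldsymbol v^{(n)}(b)|^2=\|\boldsymbol v(\cdot,b)\|_{H^{1/2}(\Gamma)^2}^2\le\gamma_2^2\,\|\boldsymbol v\|_{H^1(\Omega)^2}^2$, then gives $\|\nabla\tilde{\boldsymbol v}\|_{F(\Omega^{\rm PML})}\le\gamma_2 C_1\|\boldsymbol v\|_{H^1(\Omega)^2}$. The main obstacle is exactly the uniform mode estimate of the third paragraph: the explicit formulas behind \eqref{cvre} carry amplitudes growing like $\alpha_n^2$, and the bound closes only because of the cancellation at $y=b$ forced by the Helmholtz decomposition — which supplies the extra factor $\sim|\beta_1^{(n)}-\beta_2^{(n)}|\,s\sim s/|\alpha_n|$ — together with the exponential damping in the stretched coordinate; keeping these two competing effects straight uniformly over all $n$ is the delicate part, while the reduction via Parseval and the final trace inequality are routine.
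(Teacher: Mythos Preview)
Your overall framework---Parseval to reduce to a mode-by-mode bound, then the trace inequality of Lemma~\ref{tr}---is the paper's. The difference is in how the key uniform mode estimate is obtained. The paper first sets $\boldsymbol w=\bar{\tilde{\boldsymbol v}}$ (note that \eqref{extend} applies the stretched Navier operator to $\bar{\tilde{\boldsymbol v}}$, not to $\tilde{\boldsymbol v}$, so $\tilde{\boldsymbol v}$ is \emph{not} literally given by \eqref{cvre}; you should fix this, though it only changes signs and conjugates and not the sizes), derives explicit coefficients $\tilde A_j^{(n)},\tilde B_j^{(n)}$ analogous to those behind \eqref{cvre}, expands $|\partial_x\tilde v_j|^2+|\partial_y\tilde v_j|^2$ into sixteen separate exponential terms, and bounds each one by a case split $n\in U_1$, $n\in U_2\setminus U_1$, $n\notin U_2$, using the decay of $\varepsilon_j^{(n)},\delta_j^{(n)},\eta^{(n)}$ together with Proposition~\ref{eg4} to absorb stray powers of $|\alpha_n|$ into the exponential. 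Your route instead isolates the half-space outgoing piece, exploits the algebraic identity that the two matrix amplitudes sum to the identity at $y=b$ together with $|\beta_1^{(n)}-\beta_2^{(n)}|\lesssim|\alpha_n|^{-1}$ to bound the difference of exponentials, and then integrates with the gain $\int s^2e^{-2\Delta s}\,{\rm d}s\sim\Delta^{-3}$. This is a cleaner argument that makes the collapse of $\alpha_n$-powers transparent; the paper's brute-force expansion hides that mechanism inside the coefficient formulas.

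Two cautions. First, ``the bound for $\tilde v_j^{(n)\prime}$ is obtained the same way'' is too quick: after differentiation the coefficients pick up factors $i\rho\beta_k^{(n)}$, so the relevant sum at $y=b$ is $a\beta_1^{(n)}+b\beta_2^{(n)}$, not $a+b$. You need that this combination is $O(|\alpha_n|)\,|\boldsymbol v^{(n)}(b)|$ rather than $O(|\alpha_n|^3)$; it is, because $\alpha_n^2+(\beta_2^{(n)})^2=\kappa_2^2$ and $\beta_1^{(n)}-\beta_2^{(n)}=O(|\alpha_n|^{-1})$, but this extra identity should be stated. Second, the factor you call $s$ in the difference bound is really $|\zeta_y|=|\int_b^y\rho|$, which is comparable to $s={\rm Re}\,\zeta_y$ only if $\rho_2\lesssim\rho_1$; this holds for the power-law PML used here but deserves a remark.
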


\begin{lemm}\label{bext}
Let $\boldsymbol{v}\in H^1_{\rm qp}(\Omega)^2$ and $\tilde{\boldsymbol v}\in
\mathring{H}^1_{\rm qp}(D)^2$ be its extension satisfying \eqref{extend}. Then
there exists a positive constant $C_2$ such that
\[
\|\mathscr{D}\tilde{\boldsymbol v}\|_{L^2(\Gamma^{\rm
PML})^2}\leq \gamma_2 C_2\|\boldsymbol{v}\|_{H^1(\Omega)^2}.
\]
\end{lemm}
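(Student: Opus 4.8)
The plan is to reduce the estimate to Fourier modes in $x$, exploit the explicit Rayleigh-type representation \eqref{cvre} of the extension, and then establish a uniform mode-wise bound in which the polynomial growth produced by differentiating in $y$ is dominated by the exponential absorption of the PML layer; summing over modes and invoking Lemma \ref{tr} then yields the claim. First I would simplify the left-hand side. Since $\tilde{\boldsymbol v}=0$ on $\Gamma^{\rm PML}$, every tangential ($x$-) derivative of $\tilde{\boldsymbol v}$ vanishes there, so by the definition \eqref{do} of $\mathscr{D}$ the traction reduces to
\[
\mathscr{D}\tilde{\boldsymbol v}\big|_{\Gamma^{\rm PML}}
=\bigl[\,\mu\,\partial_y\tilde v_1,\ (\lambda+2\mu)\,\partial_y\tilde v_2\,\bigr]^\top,
\]
and it suffices to control $\partial_y\tilde{\boldsymbol v}$ on $\Gamma^{\rm PML}$. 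Writing $\boldsymbol v(x,b)=\sum_n\boldsymbol v^{(n)}(b)e^{{\rm i}\alpha_n x}$ and expanding the extension in the same basis, Parseval's identity gives
\[
\|\mathscr{D}\tilde{\boldsymbol v}\|_{L^2(\Gamma^{\rm PML})^2}^2
=\Lambda\sum_{n\in\mathbb Z}\Bigl(\mu^2\,|\partial_y\tilde v_1^{(n)}(b+\delta)|^2+(\lambda+2\mu)^2\,|\partial_y\tilde v_2^{(n)}(b+\delta)|^2\Bigr),
\]
so the lemma is reduced to a mode-by-mode estimate for $\partial_y\tilde{\boldsymbol v}^{(n)}(b+\delta)$ in terms of the data $\boldsymbol v^{(n)}(b)$.

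Next I would make this estimate explicit. Because $\bar{\tilde{\boldsymbol v}}$ solves the complex-stretched Navier equation \eqref{cvne} in $\Omega^{\rm PML}$ with Dirichlet data $\boldsymbol v^{(n)}(b)$ on $\Gamma$ and the homogeneous condition on $\Gamma^{\rm PML}$, it admits the representation \eqref{cvre}, whose coefficients $A_j^{(n)},B_j^{(n)}$ are the explicit functions of $\boldsymbol v^{(n)}(b)$, $\beta_j^{(n)}$ and $\zeta$ obtained from \eqref{lev}. Differentiating \eqref{cvre} in $y$ brings down a factor $\pm{\rm i}\beta_j^{(n)}\rho(y)$, and evaluating at $y=b+\delta$ (where $\int_b^{y}\rho=\zeta$ by \eqref{zeta}) represents $\partial_y\tilde{\boldsymbol v}^{(n)}(b+\delta)$ as a $2\times2$ matrix $\widehat M_\delta^{(n)}$ acting on $\boldsymbol v^{(n)}(b)$, with entries built from $\beta_j^{(n)}\rho(b+\delta)$, the exponentials $e^{\pm{\rm i}\beta_j^{(n)}\zeta}$, and the quantities $\chi^{(n)},\hat\chi^{(n)},\varepsilon_j^{(n)},\delta_j^{(n)},\eta^{(n)}$ of \eqref{chi}, \eqref{vepsn}, \eqref{hchi}. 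The target is then a uniform bound $\|\widehat M_\delta^{(n)}\|_2\le C_2$ for all $n$, after which Parseval together with Lemma \ref{tr} yields
\[
\|\mathscr{D}\tilde{\boldsymbol v}\|_{L^2(\Gamma^{\rm PML})^2}
\le C_2\|\boldsymbol v\|_{L^2(\Gamma)^2}
\le \gamma_2 C_2\|\boldsymbol v\|_{H^1(\Omega)^2}.
\]

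The main obstacle is precisely this uniform-in-$n$ estimate for $\widehat M_\delta^{(n)}$. For the finitely many propagating modes $n\in U_1\cup U_2$ there is nothing to prove, since all quantities are bounded. For the evanescent tail $|\alpha_n|>\kappa_2$ one has $\beta_j^{(n)}={\rm i}\Delta_j^{(n)}$ with $\Delta_j^{(n)}\sim|\alpha_n|$, so the $y$-derivative contributes a factor of order $|\alpha_n|$ that must be absorbed by the PML decay. The supporting facts, all consequences of $\Delta_j^{(n)}\to\infty$ together with ${\rm Re}\,\zeta>0$ and ${\rm Im}\,\zeta>0$, are that $\chi^{(n)}\to(\kappa_1^2+\kappa_2^2)/2$ and $\hat\chi^{(n)}\to\chi^{(n)}$ remain bounded away from zero, that $\varepsilon_j^{(n)}$ and $\delta_j^{(n)}$ decay exponentially while $\eta^{(n)}\to1$, and, most importantly, that the Dirichlet-to-far-traction factor behaves like $\Delta_j^{(n)}e^{{\rm i}\beta_j^{(n)}\zeta}\sim\Delta_j^{(n)}e^{-\Delta_j^{(n)}{\rm Re}\,\zeta}$, which is uniformly bounded because $t\mapsto t\,e^{-t\,{\rm Re}\,\zeta}$ is bounded on $(0,\infty)$. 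The scalar model $\phi''=s^2\phi$ on $(b,b+\delta)$ with $\phi(b)$ prescribed and $\phi(b+\delta)=0$ makes the mechanism transparent: the far-end derivative equals $-s\,\phi(b)/\sinh(s\delta)\sim-2s\,e^{-s\delta}\phi(b)$, so the derivative at the outer boundary is controlled by the inner data with a constant independent of $s$. Thus the polynomial growth from differentiation is beaten by the exponential attenuation across the layer, giving $\|\widehat M_\delta^{(n)}\|_2\le C_2$ uniformly; extracting this cancellation from the fully coupled formulas for $A_j^{(n)},B_j^{(n)}$ is the only genuinely technical part, and it is what I expect to consume most of the work.
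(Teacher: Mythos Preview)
Your approach is essentially the same as the paper's: expand $\tilde{\boldsymbol v}$ (via its conjugate) in Fourier modes using the explicit Rayleigh-type representation, bound each mode of $\mathscr{D}\tilde{\boldsymbol v}$ on $\Gamma^{\rm PML}$ by showing that the exponential PML absorption beats the polynomial growth coming from differentiation, sum over $n$, and finish with Lemma~\ref{tr}. Two minor differences are worth noting. First, your preliminary simplification using $\tilde{\boldsymbol v}|_{\Gamma^{\rm PML}}=0$ to drop the tangential-derivative contributions in $\mathscr{D}$ is valid and slightly cleaner than the paper, which keeps the full operator \eqref{do} and estimates all pieces separately. Second, you claim a uniform bound $\|\widehat M_\delta^{(n)}\|_2\le C_2$, whereas the paper only asserts the weaker mode-wise bound with weight $(1+|\alpha_n|)$, landing in $H^{1/2}(\Gamma)^2$ rather than $L^2(\Gamma)^2$ before invoking Lemma~\ref{tr}; your stronger claim is in fact correct (the relevant terms behave like $|\alpha_n|^k e^{-\Delta_j^{(n)}{\rm Re}\,\zeta}$ for large $|n|$, which decays), and both routes give the same final inequality. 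The only point to be careful about is that it is $\bar{\tilde{\boldsymbol v}}$, not $\tilde{\boldsymbol v}$, that satisfies \eqref{cvne}, so the coefficients you feed into \eqref{cvre} are the $\tilde A_j^{(n)},\tilde B_j^{(n)}$ obtained from the conjugated data, exactly as the paper does in Appendix~B; you acknowledge this, but make sure the conjugation is tracked consistently when you actually carry out the computation.
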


For simplicity, we shall write $\tilde{\boldsymbol v}$ as
$\boldsymbol{v}$ in the rest of the paper since no confusion of the
notation is incurred.

\begin{lemm}[Error representation formula]\label{erf}
For any $\boldsymbol{v}\in H^1_{S, \rm qp}(\Omega)^2$, which is extended to be a
function in $H^1_{0, \rm qp}(D)^2$ according to
\eqref{extend}, and $\boldsymbol{v}_h\in\mathring{V}_h(D)^2$, we have
  \begin{align*}
    a(\boldsymbol{u}-\hat{\boldsymbol{u}}_h,\boldsymbol{v})
    =&-\int_D \boldsymbol{g}\cdot(\bar{\boldsymbol v}-\bar{\boldsymbol v}_h){\rm
d}\boldsymbol{x}-b_D(\hat{\boldsymbol{u}}_h,\boldsymbol{v}-\boldsymbol{v}_h)\\
    &+\int_{\Gamma}(\mathscr{T}-\mathscr{T}^{\rm PML})
    (\hat{\boldsymbol{u}}_h-\boldsymbol{u}_{\rm inc})
    \cdot\bar{\boldsymbol{v}}{\rm d}x\label{errform}+\int_{\Gamma^{\rm PML}}
    (\Pi_h\boldsymbol{u}_{\rm inc}-\boldsymbol{u}_{\rm inc})
    \cdot \mathscr{D}_{\hat{\boldsymbol{x}}}\bar{\boldsymbol v}{\rm d}x.
  \end{align*}
\end{lemm}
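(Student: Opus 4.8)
The plan is to derive the error representation formula by combining the weak formulations of the continuous problem \eqref{wp}, the PML boundary value problem \eqref{cwp} (equivalently the discrete problem \eqref{femvp}), and the distributional integration-by-parts identity for the residuals. First I would start from the definition of $a(\boldsymbol u-\hat{\boldsymbol u}_h,\boldsymbol v)$ and insert $\boldsymbol u^{\rm PML}$ as an intermediate term, writing $a(\boldsymbol u-\hat{\boldsymbol u}_h,\boldsymbol v)=a(\boldsymbol u-\boldsymbol u^{\rm PML},\boldsymbol v)+a(\boldsymbol u^{\rm PML}-\hat{\boldsymbol u}_h,\boldsymbol v)$. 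For the first piece I would use the computation already carried out in the proof of Theorem \ref{se}, which gives $a(\boldsymbol u-\boldsymbol u^{\rm PML},\boldsymbol v)=\langle(\mathscr T-\mathscr T^{\rm PML})(\boldsymbol u^{\rm PML}-\boldsymbol u_{\rm inc}),\boldsymbol v\rangle_\Gamma$. The subtlety is that the final formula is stated in terms of $\hat{\boldsymbol u}_h$, not $\boldsymbol u^{\rm PML}$, so I expect the $\boldsymbol u^{\rm PML}$ terms to cancel once the second piece is expanded; alternatively, one can bypass $\boldsymbol u^{\rm PML}$ entirely and work directly with the relation between $a$, $a^{\rm PML}$, and $b_D$.

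The cleaner route, which I would actually follow, is: (i) use $a(\boldsymbol u,\boldsymbol v)=\langle\boldsymbol f,\boldsymbol v\rangle_\Gamma$ from \eqref{wp}; (ii) rewrite $a(\hat{\boldsymbol u}_h,\boldsymbol v)$ by adding and subtracting $a^{\rm PML}(\hat{\boldsymbol u}_h,\boldsymbol v)$, noting that $a-a^{\rm PML}$ differs only by the boundary term $-\langle(\mathscr T-\mathscr T^{\rm PML})\hat{\boldsymbol u}_h,\boldsymbol v\rangle_\Gamma$ by \eqref{sf} and \eqref{csf}; (iii) relate $a^{\rm PML}(\hat{\boldsymbol u}_h,\boldsymbol v)$ to $b_D(\hat{\boldsymbol u}_h,\boldsymbol v)$ via Lemma \ref{dp}, which converts the operator $\mathscr T^{\rm PML}$ acting on $\Gamma$ into the bulk integral over $\Omega^{\rm PML}$ realized through the extension $\tilde{\boldsymbol v}$; after this step, $a^{\rm PML}(\hat{\boldsymbol u}_h,\boldsymbol v)$ over $\Omega$ plus the PML-layer contribution assembles exactly into $b_D(\hat{\boldsymbol u}_h,\boldsymbol v)$, where $\boldsymbol v$ now denotes its extension to $D$. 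Then I would use Galerkin orthogonality: for any $\boldsymbol v_h\in\mathring V_h(D)^2$, equation \eqref{femvp} gives $b_D(\hat{\boldsymbol u}_h,\boldsymbol v_h)=-\int_D\boldsymbol g\cdot\bar{\boldsymbol v}_h\,{\rm d}\boldsymbol x$, so subtracting produces $-\int_D\boldsymbol g\cdot(\bar{\boldsymbol v}-\bar{\boldsymbol v}_h)\,{\rm d}\boldsymbol x - b_D(\hat{\boldsymbol u}_h,\boldsymbol v-\boldsymbol v_h)$.

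The remaining term to account for is the boundary discrepancy on $\Gamma^{\rm PML}$: the discrete solution satisfies $\hat{\boldsymbol u}_h=\Pi_h\boldsymbol u_{\rm inc}$ there rather than $\boldsymbol u_{\rm inc}$, and the test function $\boldsymbol v$ (extended via \eqref{extend}) vanishes on $\Gamma^{\rm PML}$ while $\boldsymbol v_h$ need not interpolate it exactly. Tracking the Green's-formula boundary contributions on $\Gamma^{\rm PML}$ carefully — using that $\mathscr D_{\hat{\boldsymbol x}}$ is the conormal operator naturally paired with the PML bilinear form — yields the term $\int_{\Gamma^{\rm PML}}(\Pi_h\boldsymbol u_{\rm inc}-\boldsymbol u_{\rm inc})\cdot\mathscr D_{\hat{\boldsymbol x}}\bar{\boldsymbol v}\,{\rm d}x$. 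Finally, the $\boldsymbol u_{\rm inc}$ contributions in $\langle\boldsymbol f,\boldsymbol v\rangle_\Gamma=\langle\mathscr D\boldsymbol u_{\rm inc}-\mathscr T\boldsymbol u_{\rm inc},\boldsymbol v\rangle_\Gamma$ combine with the $-\langle(\mathscr T-\mathscr T^{\rm PML})\hat{\boldsymbol u}_h,\boldsymbol v\rangle_\Gamma$ term and a parallel $\boldsymbol u_{\rm inc}$ contribution from $\boldsymbol g$ in the PML layer to reorganize into $\int_\Gamma(\mathscr T-\mathscr T^{\rm PML})(\hat{\boldsymbol u}_h-\boldsymbol u_{\rm inc})\cdot\bar{\boldsymbol v}\,{\rm d}x$; this is the bookkeeping step I expect to be the main obstacle, since it requires consistently matching the incident-wave terms across the $\Gamma$ transparent condition, the source $\boldsymbol g$ supported in $\Omega^{\rm PML}$, and the definition of $\mathscr T^{\rm PML}$ via Lemma \ref{dp}. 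Collecting the four surviving terms gives precisely the stated identity.
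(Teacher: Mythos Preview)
Your proposal is essentially correct and uses the same ingredients as the paper: the relation $a-a^{\rm PML}=\langle(\mathscr T^{\rm PML}-\mathscr T)\,\cdot\,,\,\cdot\,\rangle_\Gamma$, Lemma~\ref{dp}, Green's formula in $\Omega^{\rm PML}$ exploiting $\mathscr L\bar{\boldsymbol v}=0$, and the Galerkin equation \eqref{femvp}. The paper in fact follows your \emph{first} route rather than your ``cleaner'' one: it inserts the continuous PML solution $\hat{\boldsymbol u}$ (i.e.\ $\boldsymbol u^{\rm PML}$ on $\Omega$), writes $a(\boldsymbol u-\hat{\boldsymbol u}_h,\boldsymbol v)=a(\boldsymbol u-\hat{\boldsymbol u},\boldsymbol v)+a(\hat{\boldsymbol u}-\hat{\boldsymbol u}_h,\boldsymbol v)$, and then reduces $a^{\rm PML}(\hat{\boldsymbol u}-\hat{\boldsymbol u}_h,\boldsymbol v)$ to $b_D(\hat{\boldsymbol u}-\hat{\boldsymbol u}_h,\boldsymbol v)$ minus the $\Gamma^{\rm PML}$ boundary term.

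The practical advantage of the paper's organization over your ``cleaner route'' is that the term $-\int_D\boldsymbol g\cdot\bar{\boldsymbol v}\,{\rm d}\boldsymbol x$ appears for free from the continuous PML equation \eqref{twp} applied to the extended test function $\boldsymbol v\in H^1_{0,\rm qp}(D)^2$, namely $b_D(\hat{\boldsymbol u},\boldsymbol v)=-\int_D\boldsymbol g\cdot\bar{\boldsymbol v}\,{\rm d}\boldsymbol x$. In your direct route this term has to be extracted from $\langle\boldsymbol f,\boldsymbol v\rangle_\Gamma$ via a separate Green's-formula computation with $\boldsymbol u_{\rm inc}$ in $\Omega^{\rm PML}$ --- precisely the ``bookkeeping'' you flagged as the main obstacle. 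Both routes reach the same identity, but inserting $\hat{\boldsymbol u}$ shortens the calculation and avoids that obstacle entirely.
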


\begin{proof}
It follows from \eqref{sf} and \eqref{twp} that
\begin{align*}
a(\boldsymbol{u}-\hat{\boldsymbol{u}}_h,\boldsymbol{v})
&=a(\boldsymbol{u}-\hat{\boldsymbol{u}},\boldsymbol{v})
 +a(\hat{\boldsymbol{u}}-\hat{\boldsymbol{u}}_h,\boldsymbol{v})\\
    &=\int_{\Gamma}(\mathscr{T}-\mathscr{T}^{\rm
PML})(\hat{\boldsymbol{u}}-\boldsymbol{u}_{\rm
inc})\cdot\bar{\boldsymbol{v}}{\rm d}x+a^{\rm
PML}(\hat{\boldsymbol{u}}-\hat{\boldsymbol{u}}_h,\boldsymbol{v})
    -\int_{\Gamma}(\mathscr{T}-\mathscr{T}^{\rm
PML})(\hat{\boldsymbol{u}}-\hat{\boldsymbol{u}}_h)
    \cdot\bar{\boldsymbol{v}}{\rm d}x\\
    &=\int_{\Gamma}(\mathscr{T}-\mathscr{T}^{\rm
PML})(\hat{\boldsymbol{u}}_h-\boldsymbol{u}_{\rm
inc})\cdot\bar{\boldsymbol{v}}{\rm d}x
+a^{\rm PML}(\hat{\boldsymbol{u}}-\hat{\boldsymbol{u}}_h,\boldsymbol{v}).
\end{align*}
Using \eqref{sf} and Lemma \ref{dp} give
\[
a^{\rm
PML}(\hat{\boldsymbol{u}}-\hat{\boldsymbol{u}}_h,\boldsymbol{v})=b_{\Omega
}(\hat{\boldsymbol{u}}-\hat{\boldsymbol{u}}_h,\boldsymbol{v})-\int_{\Gamma}
\mathscr{T}^{\rm PML}(\hat{\boldsymbol{u}}-\hat{\boldsymbol{u}}_h)
\cdot\boldsymbol{v}{\rm
d}x=b_{\Omega}(\hat{\boldsymbol{u}}-\hat{\boldsymbol{u}}_h,
\boldsymbol{v})-\int_{\Gamma}(\hat{\boldsymbol{u}}-\hat{\boldsymbol{u}}_h)
\cdot \mathscr{D}\bar{\boldsymbol{v}}{\rm d}x.
\]
Since $\mathscr{L}\bar{\boldsymbol{v}}=0$ in $\Omega^{\rm PML}$, we deduce by
Green's formula that
\[
b_{\Omega^{\rm
PML}}(\hat{\boldsymbol{u}}-\hat{\boldsymbol{u}}_h,\boldsymbol{v})
  =-\int_{\Gamma}(\hat{\boldsymbol{u}}-\hat{\boldsymbol{u}}_h)
  \cdot \mathscr{D}\bar{\boldsymbol{v}}{\rm d}x
  +\int_{\Gamma^{\rm PML}}(\hat{\boldsymbol{u}}-\hat{\boldsymbol{u}}_h)
  \cdot \mathscr{D}_{\hat{\boldsymbol x}}\bar{\boldsymbol v}{\rm d}x.
\]
Applying \eqref{twp} and \eqref{femvp} yields
\begin{align*}
a^{\rm PML}(\hat{\boldsymbol{u}}-\hat{\boldsymbol{u}}_h,\boldsymbol{v})
    =&b_{D}(\hat{\boldsymbol{u}}-\hat{\boldsymbol{u}}_h,\boldsymbol{v})
    -\int_{\Gamma^{\rm PML}}(\hat{\boldsymbol{u}}-\hat{\boldsymbol{u}}_h)
  \cdot \mathscr{D}_{\hat{\boldsymbol x}}\bar{\boldsymbol{v}}{\rm d}x\\
    =&-\int_D \boldsymbol{g}(\bar{\boldsymbol v}-\bar{\boldsymbol v}_h){\rm
d}\boldsymbol{x}-b_D (\hat{\boldsymbol{u}}_h,\boldsymbol{v}-\boldsymbol{v}_h)
    -\int_{\Gamma^{\rm PML}}(\hat{\boldsymbol{u}}-\hat{\boldsymbol{u}}_h)
  \cdot \mathscr{D}_{\hat{\boldsymbol x}}\bar{\boldsymbol{v}}{\rm d}x,
\end{align*}
  which completes the proof.
\end{proof}

Clearly, it suffices to evaluate all the terms in the error representation
formula in order to show the posteriori error estimate in the Theorem
\ref{thmerr}. Now we present the proof as follows.

\begin{proof}
Taking $\boldsymbol{v}_h=\Pi_h\boldsymbol{v}_h\in H_{0, \rm qp}^1(D)^2$ in
Lemma \ref{erf} for the error representation formula, we have
\begin{align*}
  a(\boldsymbol{u}-\hat{\boldsymbol{u}}_h,\boldsymbol{v})
  =&-\int_D \boldsymbol{g}(\bar{\boldsymbol v}-\bar{\boldsymbol
v}_h){\rm d}\boldsymbol{x}
-b_{D}(\hat{\boldsymbol{u}}_h,\boldsymbol{v}-\boldsymbol{v}_h)\\
   &+\int_{\Gamma}(\mathscr{T}-\mathscr{T}^{\rm
PML})(\hat{\boldsymbol{u}}_h-\boldsymbol{u}_{\rm inc})\cdot\boldsymbol{v}{\rm
d}x+ \int_{\Gamma^{\rm PML}}(\Pi_h{\boldsymbol{u}}_{\rm
inc}-\boldsymbol{u}_{\rm inc})
  \cdot \mathscr{D}_{\hat{\boldsymbol x}}\bar{\boldsymbol{v}}{\rm d}x\\
=& J_1+J_2+J_3+J_4.
\end{align*}
It follows from integration by parts that
\[
J_1+J_2=\sum\limits_{T\in\mathcal{M}_h}\Big(
\int_TR_T\cdot(\bar{\boldsymbol v}-\Pi_h\bar{\boldsymbol v})
{\rm d}\boldsymbol{x}+\sum\limits_{e\subset\partial T}\frac{1}{2}\int_e J_e
\cdot(\bar{\boldsymbol v}-\Pi_h\bar{\boldsymbol v}){\rm d}x\Big),
\]
which gives after using the interpolation estimates and Lemma \ref{extend est}
that
\[
|J_1+J_2|\leq
C\sum\limits_{T\in\mathcal{M}_h}\eta_T \|\nabla\boldsymbol{v}\|_{
F(\tilde{T})}\leq C(1+\gamma_2C_1)\left(\sum\limits_{T\in\mathcal{M}_h}
\eta_T^2\right)^{1/2}
  \|\boldsymbol{v}\|_{H^1(\Omega)^2}.
\]
By Lemmas \ref{boe} and \ref{tr}, we obtain
\[
  |J_3|\leq  \hat{F}\|\hat{\boldsymbol{u}}_h-\boldsymbol{u}_{\rm
inc}\|_{L^2(\Gamma)^2} \|\boldsymbol{v}\|_{L^2(\Gamma)^2}
  \leq \gamma_2\hat{F}\|\hat{\boldsymbol{u}}_h-\boldsymbol{u}_{\rm
inc}\|_{L^2(\Gamma)^2}\|\boldsymbol{v}\|_{H^1(\Omega)^2}.
\]
Finally, it follows from Lemmas \ref{tr} and \ref{bext}  that
\begin{align*}
  |J_4|\leq & C_2 \|\Pi_h\boldsymbol{u}_{\rm inc}
  -\boldsymbol{u}_{\rm inc}\|_{L^2(\Gamma^{\rm PML})^2}
  \|\boldsymbol{v}\|_{L^2(\Gamma)^2}\\
  \leq & \gamma_2 C_2 \|\Pi_h\boldsymbol{u}_{\rm inc}
  -\boldsymbol{u}_{\rm inc}\|_{L^2(\Gamma^{\rm PML})^2}
  \|\boldsymbol{v}\|_{H^1(\Omega)^2}.
\end{align*}
The proof is completed by combining the above estimates
\end{proof}

\section{Numerical experiments}

According to the discussion in section 3, we choose the PML medium property as
the power function and need to specify the thickness $\delta$ of the layers
and the medium parameter $\sigma$. Recall from Theorem \ref{thmerr} that the a
posteriori error estimate consists of two parts: the PML error $\epsilon_{\rm
PML}$ and the finite element discretization error $\epsilon_{\rm FEM}$, where
\begin{align}
\label{epml} \epsilon_{\rm PML}&=\hat{F}\|\boldsymbol{u}^{\rm
PML}_h-\boldsymbol{u}_{\rm inc}\|_{L^2(\Gamma)^2},\\
\label{efem}\epsilon_{\rm FEM}&=\|\boldsymbol{u}^{\rm
PML}_h-\boldsymbol{u}_{\rm inc}\|_{L^2(\Gamma^{\rm PML})^2}+
\left(\sum_{T\in\mathcal{M}_h}\eta^2_T\right)^{1/2}.
\end{align}
In our implementation, we first choose $\delta$ and $\sigma$ such that
$\hat{F} \Lambda^{1/2}\leq 10^{-8}$, which makes the PML error negligible
compared with the finite element discretization error. Once the PML region and
the medium property are fixed, we use the standard finite element adaptive
strategy to modify the mesh according to the a posteriori error estimate
\eqref{efem}. For any $T\in\mathcal{M}_h$, we define the local a posteriori
error estimator
\[
 \hat{\eta}_T=\eta_T +\|\Pi_h\boldsymbol{u}_{\rm
inc}-\boldsymbol{u}_{\rm inc}\|_{L^2(\Gamma^{\rm PML}\cap\partial T)^2}.
\]
The adaptive FEM algorithm is summarized in Table \ref{alg}.

\begin{table}
\hrulefill

\begin{tabular}{ll}
 1 & Given the tolerance $\epsilon > 0, \tau\in (0,1)$;\\
 2 & Choose $\delta$ and $\sigma$ such that $\hat{F} \Lambda^{1/2}\leq
10^{-8}$;\\
 3 & Construct an initial triangulation $\mathcal{M}_h$ over $\Omega$ and
compute error estimators;\\
 4 &  While $\epsilon_h>\epsilon$ do\\
 5 & \qquad choose  $\hat{\mathcal{M}}_h\subset\mathcal{M}_h$ according to the
strategy $\eta_{\hat{\mathcal{M}}_h}>\tau\eta_{\mathcal{M}_h}$;\\
 6 & \qquad refine all the elements in $\hat{\mathcal{M}}_h$ and obtain a new
mesh denoted still by $\mathcal{M}_h$;\\
 7 & \qquad solve the discrete problem \eqref{femvp} on the new mesh
$\mathcal{M}_h$;\\
 8 & \qquad compute the corresponding error estimators;\\
 9 & End while.
\end{tabular}

\hrulefill
\caption{The adaptive FEM algorithm.}
\label{alg}
\end{table}

In the following, we present two examples to demonstrate the
competitive numerical performance of the proposed algorithm. We choose
$\lambda=1$ and $\mu=2$. The implementation of the adaptive algorithm is based
on FreeFem++-cs \cite{h-jnm12}.

{\em Example} 1. We consider the simplest periodic structure, a straight line.
In this situation, the exact solution is available, which allows us to test the
accuracy of the numerical algorithm. Assume that a plane compressional plane
wave $\boldsymbol{u}_{\rm inc}=[\sin\theta,\,-\cos\theta]^\top e^{{\rm i}(\alpha
x-\beta y)}$ is incident on the straight line $y=0$, where
$\alpha=\kappa_1\sin\theta, \beta=\kappa_1\cos\theta, \theta\in(-\pi/2,\,\pi/2)$
is the incident angle. It follows from the Navier equation, Helmholtz
decomposition, and outgoing radiation condition that we obtain the exact
solution
\[
\boldsymbol{u}(x, y)=\boldsymbol{u}_{\rm inc}(x, y)-[\alpha,\,
\beta]^\top R_1 e^{{\rm i}(\alpha x+\beta y)}-[\beta_2^{(0)}, \, -\alpha]^\top
R_2 e^{{\rm i}(\alpha x+\beta_2^{(0)}y)},
\]
where $\beta_2^{(0)}=(\kappa_2^2-\alpha^2)^{1/2}$ and
\[
R_1=\left(\frac{\alpha\sin\theta-\beta_2^{(0)}\cos\theta}{\alpha^2+\beta\beta_2^
{(0)}} \right),
\quad
R_2=\left(\frac{\alpha\cos\theta+\beta\sin\theta}{\alpha^2+\beta\beta_2^{(0)}}
\right).
\]
In our experiment, the parameters are chosen as $\theta=\pi/6$,
$\omega=2\pi$, and the domain $\Omega=(0,1)\times(0,1)$.
Figure \ref{ex1:err} shows the curves of
$\log \|\nabla(\boldsymbol{u}-\hat{\boldsymbol u}_k)\|_{F(\Omega)}$ versus
$\log N_k$ for both the a priori and the a posteriori error estimates, where
$N_k$ is the number of nodes of the mesh $\mathcal{M}_k$. The result shows that
the meshes and the associated numerical complexity are quasi-optimal for the
proposed method, i.e., $\log \|\nabla(\boldsymbol{u}-\hat{\boldsymbol
u}_k)\|_{F(\Omega)}=CN_k^{-1/2}$ is valid asymptotically.

\begin{figure}
\centering
\includegraphics[width=0.4\textwidth]{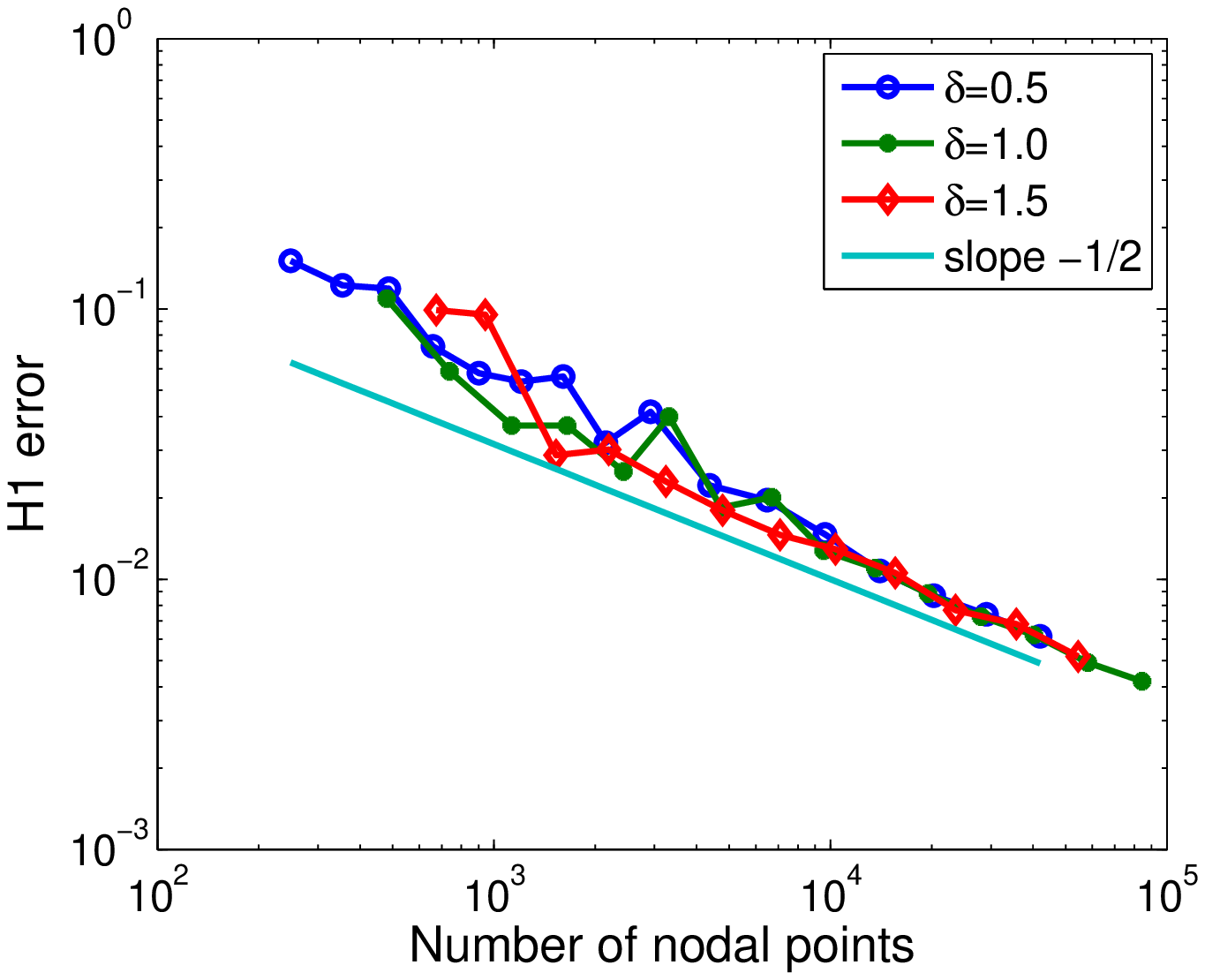}
\includegraphics[width=0.4\textwidth]{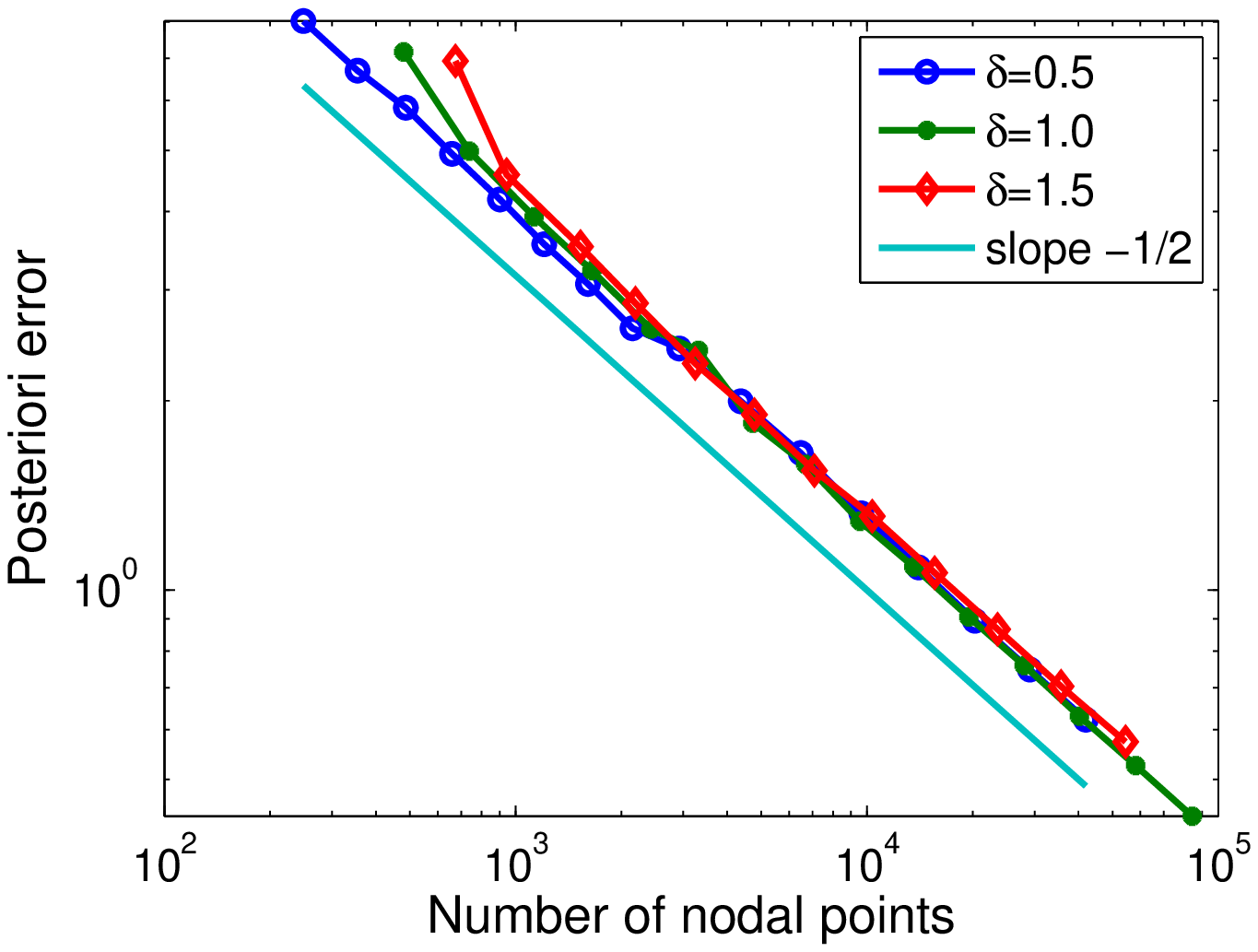}
\caption{Example 1: Quasi-optimality of the a priori (left) and a posteriori
(right) error estimates.}
\label{ex1:err}
\end{figure}

{\em Example 2}. This example is concerned with the scattering of the
compressional plane wave
$\boldsymbol{u}_{\rm inc}=[\sin\theta,\,-\cos\theta]^\top e^{{\rm i}(\alpha
x-\beta y)}$ on a grating surface with a sharp angle. The problem geometry
is shown in Figure \ref{ex2:geo}. The parameters are chosen the same as those
for Example 1. Since there is no exact solution for this example, we plot in
Figure \ref{ex2:err} the curves of $\log\|\nabla(\boldsymbol{u}-\hat{\boldsymbol
u}_k)\|_{F(\Omega)}$ versus $\log N_k$ for the a posteriori error estimate,
where $N_k$ is the number of nodes of the mesh $\mathcal{M}_k$. Again, the
result shows that the meshes and the associated numerical complexity are
quasi-optimal for the proposed method. To verify Theorem \ref{ce}, we plot
in Figure \ref{ex2:eff} the grating efficiencies and the errors of the total
efficiency for different PML thickness. Figure \ref{ex3:mesh} shows the mesh and
the amplitude of the associated solution after 6 adaptive iterations when the
grating efficiency is stabilized. The mesh has 8491 nodes. This example shows
clearly the ability of the proposed method to capture the singularity of the
solution.

\begin{figure}
\center
\includegraphics[width=0.3\textwidth]{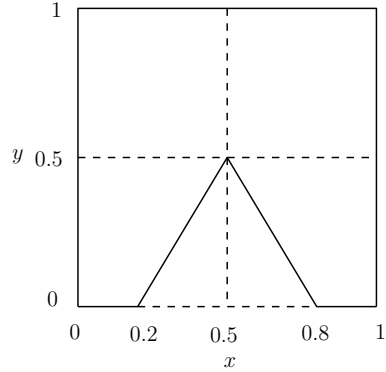}
\caption{Example 2: Geometry of the domain}
\label{ex2:geo}
\end{figure}

\begin{figure}
\center
\includegraphics[width=0.4\textwidth]{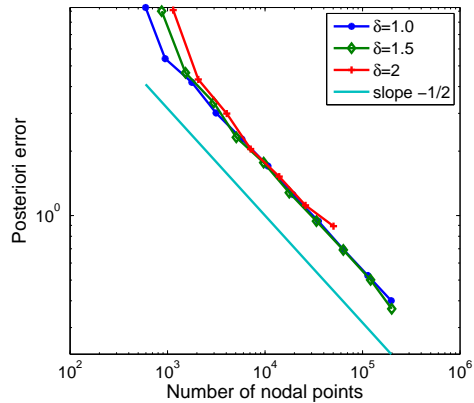}
\caption{Example 2: Quasi-optimality of the a posteriori error estimates for
different PML thickness.}
\label{ex2:err}
\end{figure}

\begin{figure}
\center
\includegraphics[width=0.4\textwidth]{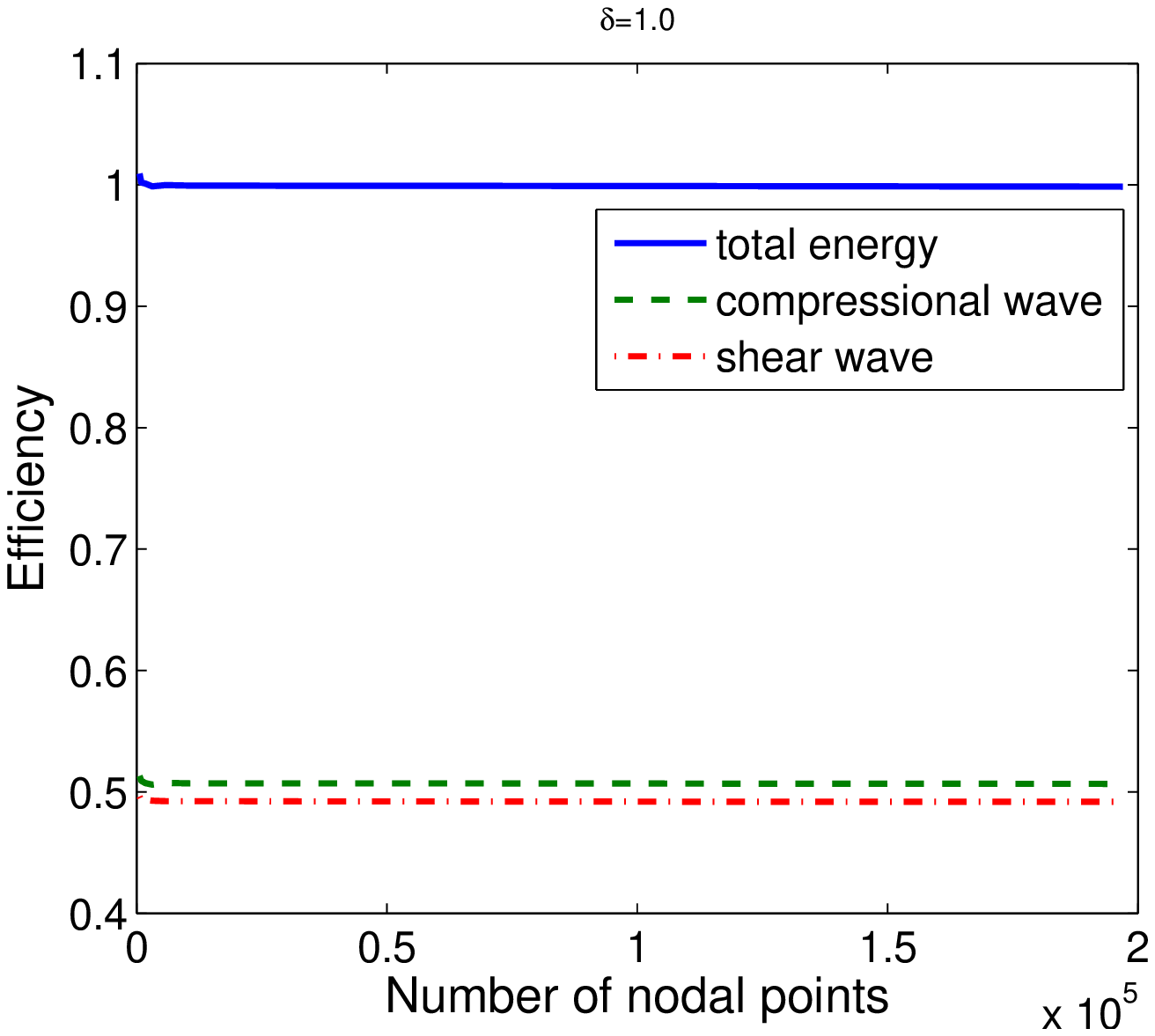}
\includegraphics[width=0.4\textwidth]{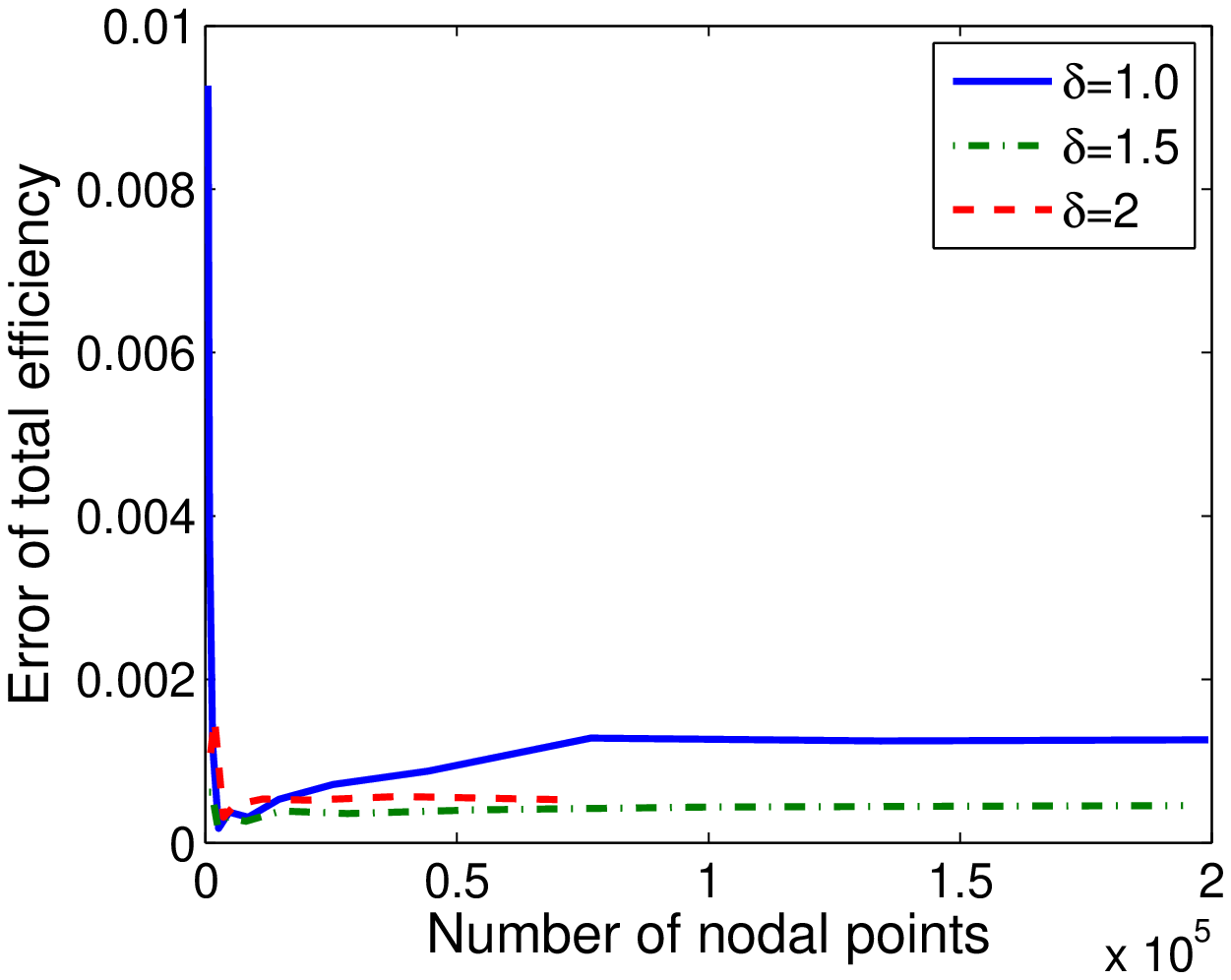}
\caption{Example 2: (left) Grating efficiency with $\delta=1.0$; (right)
Robustness of grating efficiency with respect to the thickness of PML
layers}
\label{ex2:eff}
\end{figure}

\begin{figure}
\centering
\includegraphics[width=0.25\textwidth]{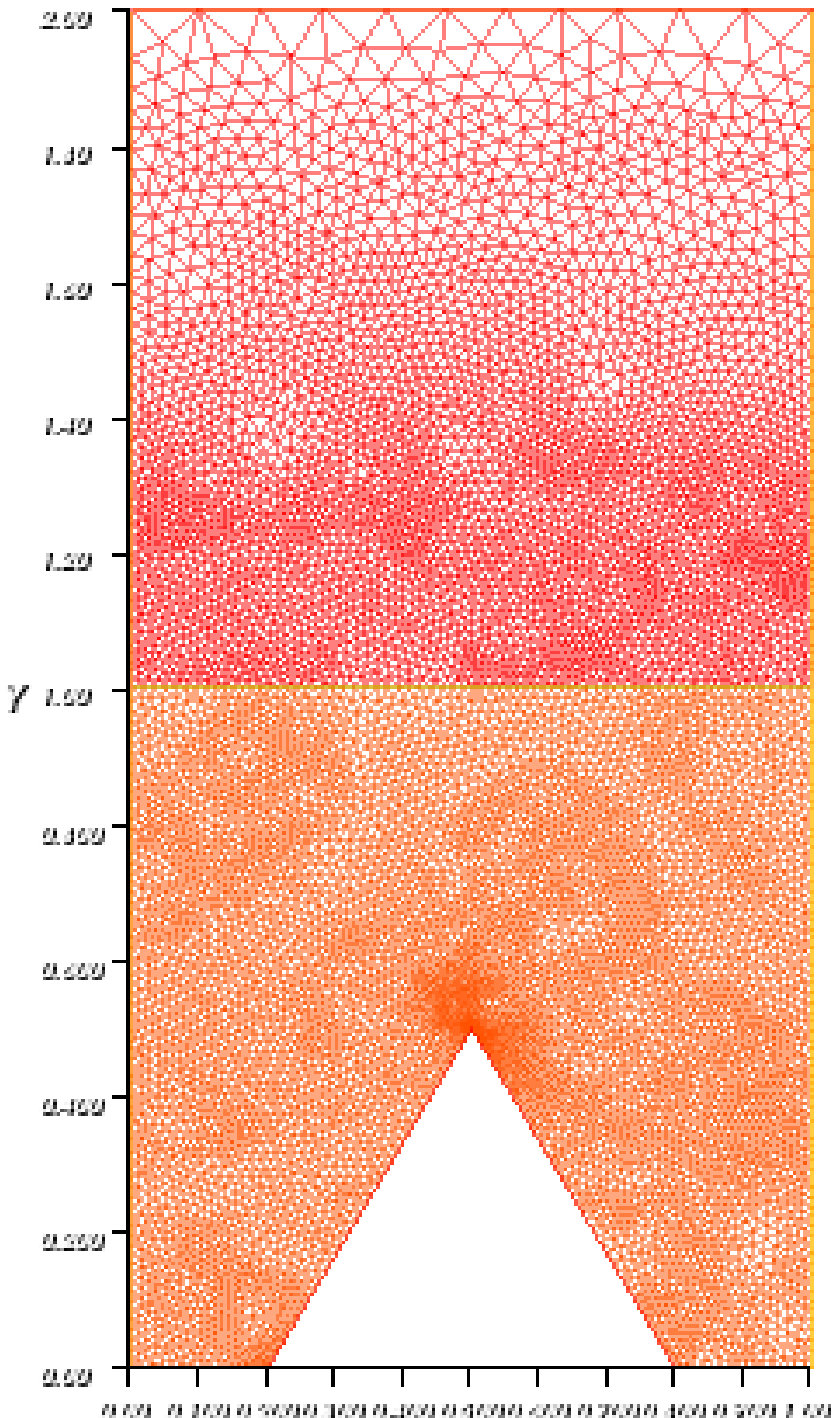}
\hspace{2.5cm}
\includegraphics[width=0.35\textwidth]{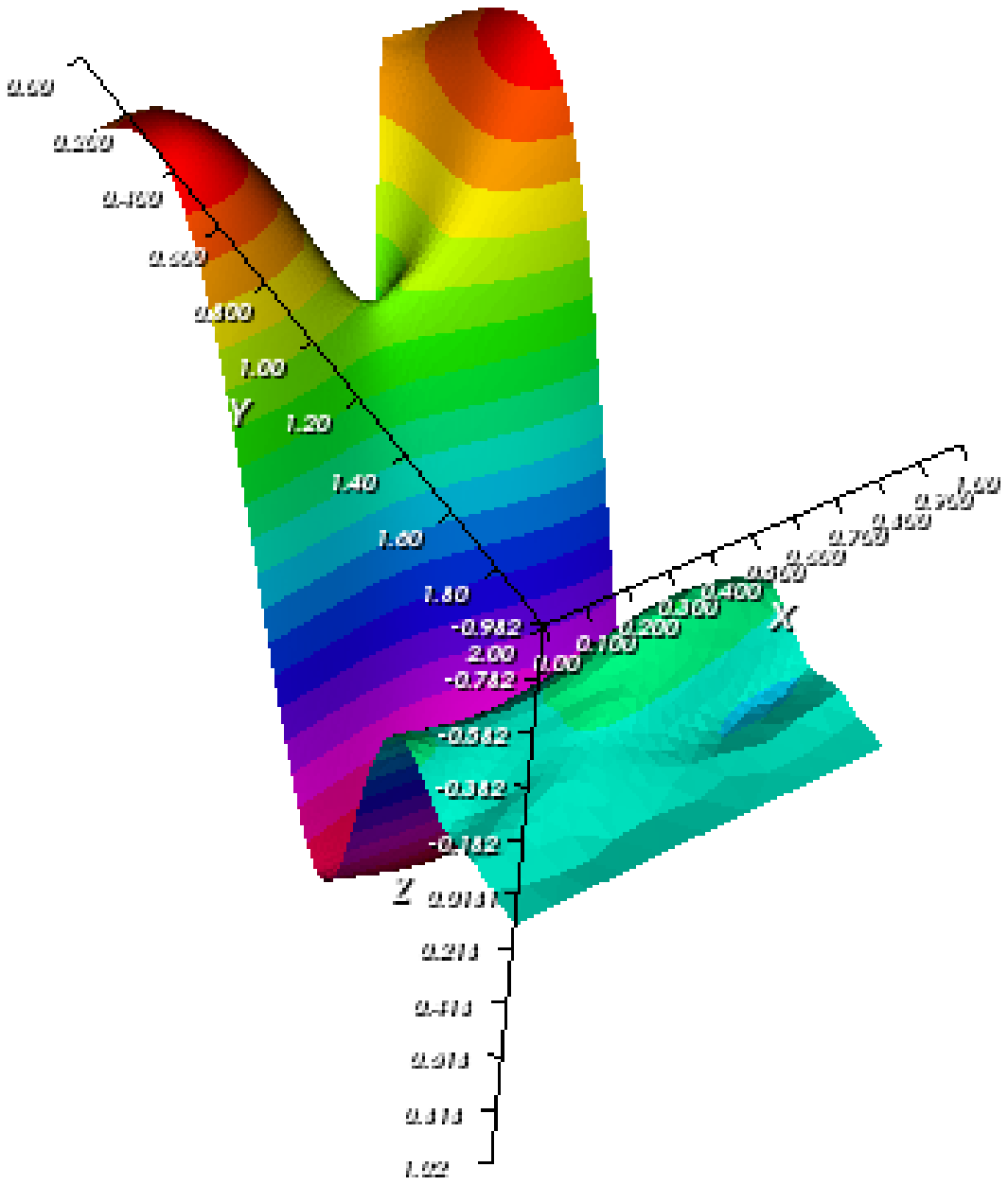}
\caption{Example 2: The mesh (left) and the surface plot of the amplitude
of the associated solution (right) after 6 adaptive iterations. The mesh has
8491 nodes.}\label{ex3:mesh}
\end{figure}

\section{Concluding remarks}

We presented an adaptive finite element method with the PML absorbing layer
technique for the elastic wave scattering problem in a periodic structure. We
showed that the truncated PML problem has a unique weak solution which
converges exponentially to the solution of the original problem by increasing
the PML parameters. We deduced the a posteriori error estimate for the PML
solution which serves as a basis for the adaptive finite element approximation.
Numerical results show that the proposed method is effective to solve the
diffractive grating problem of elastic waves. The method can be directly
applied to solve the diffraction grating problems with other interface and/or
boundary conditions. We are also currently extending the method to the
three-dimensional problem where biperiodic structures need to be considered.

\appendix

\section{Technical estimates}

In this section, we present the proofs for some technical estimates which are
used in our analysis for the error estimate between the solutions of the PML
problem and the original scattering problem.

\begin{prop}\label{chie}
For any $n\in\mathbb{Z}$, we have $\kappa_1^2<|\chi^{(n)}|<\kappa_2^2$.
\end{prop}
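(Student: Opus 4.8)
The plan is to prove both inequalities by a short case analysis governed by the position of $|\alpha_n|$ relative to the two wavenumbers. First I would note that $\kappa_1<\kappa_2$: since $\mu>0$ and $\lambda+\mu>0$ we have $\lambda+2\mu>\mu$, whence $\kappa_1=\omega/\sqrt{\lambda+2\mu}<\omega/\sqrt{\mu}=\kappa_2$ by \eqref{wn}. The resonance assumption excludes $|\alpha_n|\in\{\kappa_1,\kappa_2\}$, so exactly one of the cases $|\alpha_n|<\kappa_1$, $\kappa_1<|\alpha_n|<\kappa_2$, $|\alpha_n|>\kappa_2$ occurs, and in each I would substitute the appropriate branch of $\beta_j^{(n)}$ from \eqref{beta} into the definition \eqref{chi} of $\chi^{(n)}$.

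When $|\alpha_n|<\kappa_1$, both $\beta_1^{(n)}$ and $\beta_2^{(n)}$ are real and positive, so $\chi^{(n)}=\alpha_n^2+\big((\kappa_1^2-\alpha_n^2)(\kappa_2^2-\alpha_n^2)\big)^{1/2}>0$ is real and $|\chi^{(n)}|=\chi^{(n)}$. The two bounds then follow from the factorizations
\[
\chi^{(n)}-\kappa_1^2=(\kappa_1^2-\alpha_n^2)^{1/2}\Big[(\kappa_2^2-\alpha_n^2)^{1/2}-(\kappa_1^2-\alpha_n^2)^{1/2}\Big],\qquad
\kappa_2^2-\chi^{(n)}=(\kappa_2^2-\alpha_n^2)^{1/2}\Big[(\kappa_2^2-\alpha_n^2)^{1/2}-(\kappa_1^2-\alpha_n^2)^{1/2}\Big],
\]
both strictly positive since $\kappa_2^2-\alpha_n^2>\kappa_1^2-\alpha_n^2>0$. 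The case $|\alpha_n|>\kappa_2$ is entirely parallel: now both $\beta_j^{(n)}$ are purely imaginary, so $\beta_1^{(n)}\beta_2^{(n)}=-\big((\alpha_n^2-\kappa_1^2)(\alpha_n^2-\kappa_2^2)\big)^{1/2}$ is real and negative and $\chi^{(n)}$ is real; I would first check $\chi^{(n)}>0$, i.e.\ $\alpha_n^4>(\alpha_n^2-\kappa_1^2)(\alpha_n^2-\kappa_2^2)$, equivalently $\alpha_n^2(\kappa_1^2+\kappa_2^2)>\kappa_1^2\kappa_2^2$, which holds because $\alpha_n^2>\kappa_2^2$, and then use the same two factorizations with $\kappa_j^2-\alpha_n^2$ replaced by $\alpha_n^2-\kappa_j^2$.

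The remaining case $\kappa_1<|\alpha_n|<\kappa_2$ is the one where $\chi^{(n)}$ is genuinely complex, so one must work with its modulus: here $\beta_1^{(n)}$ is purely imaginary and $\beta_2^{(n)}$ real positive, so $\beta_1^{(n)}\beta_2^{(n)}$ is purely imaginary and
\[
|\chi^{(n)}|^2=\alpha_n^4+(\alpha_n^2-\kappa_1^2)(\kappa_2^2-\alpha_n^2)=\alpha_n^2(\kappa_1^2+\kappa_2^2)-\kappa_1^2\kappa_2^2,
\]
which is an increasing affine function of $\alpha_n^2$ on $(\kappa_1^2,\kappa_2^2)$ taking the endpoint values $\kappa_1^4$ and $\kappa_2^4$; hence $\kappa_1^4<|\chi^{(n)}|^2<\kappa_2^4$, as required. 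I do not expect any genuine obstacle here — the whole argument is elementary bookkeeping — the only points needing a little care are remembering that in the middle case $\chi^{(n)}$ is not real (so the endpoint evaluation of $|\chi^{(n)}|^2$ is the clean route), and observing in the other two cases that $\chi^{(n)}$ is in fact positive, so that $|\chi^{(n)}|=\chi^{(n)}$ and the linear-factor manipulations apply directly.
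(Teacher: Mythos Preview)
Your proof is correct and follows essentially the same three-case analysis as the paper's proof, with the identical treatment of the mixed case $\kappa_1<|\alpha_n|<\kappa_2$ via $|\chi^{(n)}|^2=\alpha_n^2(\kappa_1^2+\kappa_2^2)-\kappa_1^2\kappa_2^2$. The only cosmetic difference is that in the two real cases the paper argues via monotonicity of the auxiliary functions $g_1(t)=t+(k_1-t)^{1/2}(k_2-t)^{1/2}$ and $g_2(t)=t-(t-k_1)^{1/2}(t-k_2)^{1/2}$, whereas you obtain the bounds directly from the algebraic factorizations of $\chi^{(n)}-\kappa_1^2$ and $\kappa_2^2-\chi^{(n)}$; both routes are equally elementary.
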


\begin{proof}
Recalling \eqref{chi} and \eqref{beta}, we consider three cases:

\begin{enumerate}

\item[(i)] For $n\in U_1$, $\beta_1^{(n)}=(\kappa_1^2-\alpha_n^2)^{1/2}$ and
$\beta_2^{(n)}=(\kappa_2^2-\alpha_n^2)^{1/2}$. We
have
\[
\chi^{(n)}=\alpha^2_n+\beta_1^{(n)}\beta_2^{(n)}
=\alpha_n^2+(\kappa_1^2-\alpha_n^2)^{1/2}(\kappa_2^2-\alpha_n^2)^{1/2}.
\]
Consider the function
\[
g_1(t)=t+(k_1-t)^{1/2}(k_2-t)^{1/2}, \quad 0<k_1<k_2.
\]
It is easy to know that $g_1$ is decreasing for $0<t<k_1$. Hence
\[
k_1=g_1(k_1)<g_1(t)<g_1(0)=(k_1 k_2)^{1/2},
\]
which gives $\kappa_1^2<\chi^{(n)}<\kappa_1\kappa_2$.

\item[(ii)] For $n\in U_2\setminus U_1$, $\beta_1^{(n)}={\rm
i}(\alpha_n^2-\kappa_1^2)^{1/2}, \beta_2^{(n)}=(\kappa_2^2-\alpha_n^2)^{1/2}$.
We have
\[
\chi^{(n)}=\alpha_n^2+{\rm i}(\alpha_n^2-\kappa_1^2)^{1/2}
(\kappa_2^2-\alpha_n^2)^{1/2}
\]
and
\[
|\chi^{(n)}|^2=(\kappa_1^2+\kappa_2^2)\alpha_n^2-(\kappa_1\kappa_2)^2,
\]
which gives $\kappa^2_1<|\chi^{(n)}|<\kappa_2^2$.

\item[(iii)] For $n\notin U_2$, $\beta_1^{(n)}={\rm
i}(\alpha_n^2-\kappa_1^2)^{1/2},
\beta_2^{(n)}={\rm i}(\alpha_n^2-\kappa_2^2)^{1/2}$.
We have
\[
\chi^{(n)}=\alpha_n^2-(\alpha_n^2-\kappa^2_1)^{1/2}(\alpha_n^2-\kappa^2_2)^{1/2}
.
\]
Let
\[
g_2(t)=t-(t-k_1)^{1/2}(t-k_2)^{1/2}, \quad 0<k_1<k_2.
\]
It is easy to verify that the function $g_2$ is decreasing for $t>k_2$. Hence we
have
\[
(k_1+k_2)/2=g_2(\infty)<g_2(t)<g_2(k_2)=k_2,
\]
which gives $(\kappa_1^2+\kappa_2^2)/2<\chi^{(n)}<\kappa_2^2$.
\end{enumerate}
Combining the above estimates, we get $\kappa_1^2<|\chi^{(n)}|< \kappa_2^2$ for
any $n\in\mathbb{Z}$.
\end{proof}

\begin{prop}
 The function $g_3(t)=t/(e^t-1)$ is a decreasing function for $t>0$.
\end{prop}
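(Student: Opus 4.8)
The plan is to prove monotonicity of $g_3(t) = t/(e^t-1)$ by showing $g_3'(t) < 0$ for all $t>0$, which reduces to a single elementary inequality. First I would compute the derivative via the quotient rule:
\[
g_3'(t) = \frac{(e^t-1) - t\,e^t}{(e^t-1)^2}.
\]
The denominator $(e^t-1)^2$ is strictly positive for $t>0$, so the sign of $g_3'(t)$ is that of the numerator $h(t) := e^t - 1 - t\,e^t = e^t(1-t) - 1$. It therefore suffices to show $h(t) < 0$ for all $t>0$.

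To show $h(t)<0$ for $t>0$, I would note $h(0) = 0$ and examine $h'(t) = e^t(1-t) + e^t(-1) = -t\,e^t$, which is strictly negative for $t>0$. Hence $h$ is strictly decreasing on $(0,\infty)$, so $h(t) < h(0) = 0$ for every $t>0$. This gives $g_3'(t) < 0$ on $(0,\infty)$, so $g_3$ is strictly decreasing there, as claimed.

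There is essentially no hard part here: the statement is a routine calculus exercise, and the only mild subtlety is confirming the sign of the numerator rather than leaving it implicit. An alternative route, if one prefers to avoid differentiating a quotient, would be to write $1/g_3(t) = (e^t-1)/t = \sum_{k\ge 1} t^{k-1}/k!$, which is a power series with positive coefficients and hence strictly increasing on $(0,\infty)$; since $g_3$ is positive there, its reciprocal being increasing forces $g_3$ itself to be decreasing. Either argument is short; I would present the derivative-based one for directness.
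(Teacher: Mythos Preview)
Your proof is correct and follows essentially the same approach as the paper: compute $g_3'(t)=\frac{(1-t)e^t-1}{(e^t-1)^2}$ via the quotient rule and observe that it is negative for $t>0$. In fact you supply more detail than the paper, which simply asserts the inequality $(1-t)e^t-1<0$ without the auxiliary argument via $h(0)=0$ and $h'(t)=-te^t<0$.
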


\begin{proof}
A simple calculation yields
\[
 g_3'(t)=\frac{(1-t)e^t-1}{(e^t-1)^2}<0,\quad t>0,
\]
which completes the proof.
\end{proof}

\begin{prop}\label{eg4}
The function $g_4(t)=t^k/ e^{(t^2-s^2)^{1/2}/2}$ satisfies $g_4(t)\leq
(s^2+k^2)^{k/2}$ for any $t>s>0, ~ k\geq 2$.
\end{prop}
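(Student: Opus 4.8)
The plan is to fix $s>0$ and $k\ge 2$, and to maximize the function $g_4(t)=t^k e^{-(t^2-s^2)^{1/2}/2}$ over $t>s$ by ordinary calculus. First I would introduce the substitution $r=(t^2-s^2)^{1/2}\ge 0$, so that $t^2=r^2+s^2$ and $g_4(t)=(r^2+s^2)^{k/2}e^{-r/2}=:h(r)$; this removes the awkward square root from the exponent and turns the problem into maximizing $h(r)$ over $r\ge 0$. Taking logarithms, $\log h(r)=\tfrac{k}{2}\log(r^2+s^2)-\tfrac{r}{2}$, and differentiating gives $(\log h)'(r)=\dfrac{kr}{r^2+s^2}-\dfrac12$. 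Setting this to zero yields the critical-point equation $r^2-2kr+s^2=0$, whose relevant root (if real) is $r_\ast=k-\sqrt{k^2-s^2}$ when $s\le k$, and otherwise $(\log h)'>0$ is impossible for large $r$ so one checks the sign directly; in all cases $r_\ast\le k$, and moreover $r_\ast^2+s^2=2kr_\ast$ at the maximizer.

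With this identity in hand the bound falls out cleanly: at $r=r_\ast$ we have
\[
 h(r_\ast)=(r_\ast^2+s^2)^{k/2}e^{-r_\ast/2}=(2kr_\ast)^{k/2}e^{-r_\ast/2}.
\]
Since $r_\ast\ge 0$, we have $e^{-r_\ast/2}\le 1$, so it would suffice to show $(2kr_\ast)^{k/2}\le (s^2+k^2)^{k/2}$, i.e. $2kr_\ast\le s^2+k^2$. But $2kr_\ast=r_\ast^2+s^2$ and $r_\ast\le k$ forces $r_\ast^2\le k^2$, hence $2kr_\ast=r_\ast^2+s^2\le k^2+s^2$, which is exactly what is needed. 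One still has to handle the boundary/degenerate cases — namely $r\to 0^+$ (where $h(r)\to s^k\le(s^2+k^2)^{k/2}$ trivially) and $r\to\infty$ (where $h(r)\to 0$), and the case $s>k$ where the quadratic $r^2-2kr+s^2$ has no positive root, in which case $(\log h)'(r)<0$ for all $r>0$ and the supremum is again attained in the limit $r\to 0^+$, giving $g_4(t)<s^k\le(s^2+k^2)^{k/2}$.

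The main obstacle, such as it is, is purely bookkeeping: verifying that the critical point $r_\ast$ we select is genuinely the global maximizer on $[0,\infty)$ (not a minimizer), and organizing the case split on whether $s\le k$ or $s>k$. This is handled by noting that $(\log h)'(r)=\tfrac{kr}{r^2+s^2}-\tfrac12$ is positive for small $r>0$ precisely when $2k r>r^2+s^2$ has a solution, i.e. when $k^2>s^2$; in that regime $(\log h)'$ changes sign from $+$ to $-$ exactly once (the quadratic $r^2-2kr+s^2$ is upward-opening), so $r_\ast=k-\sqrt{k^2-s^2}$ is the unique interior maximizer and the argument above applies; in the complementary regime $h$ is strictly decreasing and the bound is immediate. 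No deep input is needed beyond single-variable calculus and the elementary inequality $e^{-r/2}\le1$.
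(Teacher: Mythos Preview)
Your approach---substitute $r=(t^2-s^2)^{1/2}$, set $h(r)=(r^2+s^2)^{k/2}e^{-r/2}$, locate critical points via $(\log h)'(r)=\dfrac{kr}{r^2+s^2}-\dfrac12$, and bound the maximum---is exactly the paper's. But your sign analysis is wrong, and this breaks the argument.

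Observe that $(\log h)'(0^+)=-\tfrac12<0$, so $h$ is \emph{decreasing} near $r=0$, not increasing as you claim. When $s<k$ the quadratic $r^2-2kr+s^2$ has two positive roots $r_\pm=k\pm\sqrt{k^2-s^2}$; it is positive near $r=0$, negative between its roots, and positive again beyond $r_+$. Hence $(\log h)'$ has sign pattern $-,+,-$, so $r_-=k-\sqrt{k^2-s^2}$ is a local \emph{minimum} and $r_+=k+\sqrt{k^2-s^2}$ is the interior local \emph{maximum}. The global maximum on $[0,\infty)$ is therefore $\max\{h(0),h(r_+)\}$, not $h(r_-)$. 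Since $r_+\ge k$, your key step ``$r_\ast\le k$, hence $2kr_\ast=r_\ast^2+s^2\le k^2+s^2$'' fails at the true maximizer: in fact $2kr_+=r_+^2+s^2\ge k^2+s^2$, so the crude bound $e^{-r_+/2}\le 1$ is insufficient and you must actually exploit the exponential factor.

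A fix: write $h(r_+)=(2kr_+)^{k/2}e^{-r_+/2}=\bigl(2kr_+e^{-r_+/k}\bigr)^{k/2}$ and show $2kr_+e^{-r_+/k}\le k^2+s^2$. Setting $u=r_+/k\in[1,2)$ and using $s^2=2kr_+-r_+^2$, this reduces to $2ue^{-u}\le 2-(u-1)^2$ on $[1,2]$; the difference $f(u)=2-(u-1)^2-2ue^{-u}$ satisfies $f'(u)=2(u-1)(e^{-u}-1)\le 0$ and $f(2)=1-4e^{-2}>0$, so $f>0$ throughout. Together with $h(0)=s^k\le(s^2+k^2)^{k/2}$ this completes the proof.
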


\begin{proof}
Using the change of variables $\tau=(t^2-s^2)^{1/2}$, we have
\[
 \hat{g}_4(\tau)=\frac{(\tau^2+s^2)^{k/2}}{e^{\tau/2}}.
\]
Taking the derivative of $\hat{g}_4$ gives
\[
 \hat{g}'_4(\tau)=-\frac{(\tau^2-k\tau+s^2)(\tau^2+s^2)^{1/2}}{e^{\tau/2}}.
\]
\begin{enumerate}

\item[(i)] If $s\geq k/2$, then $\hat{g}_4'\leq 0$ for $\tau>0$. The
function $\hat{g}_4$ is decreasing and reaches its maximum at $\tau=0$, i.e.,
\[
 g_4(t)\leq\hat{g}_4(0)=s^k.
\]

\item[(ii)] If $s<k/2$, then
$\hat{g}'_4<0$ for $\tau\in(0, (k-(k^2-4s^2)^{1/2})/2)\cup
((k+(k^2-4s^2)^{1/2})/2, \infty)$
and $\hat{g}_4>0$ for $\tau\in ((k-(k^2-4s^2)^{1/2})/2,
(k+(k^2-4s^2)^{1/2})/2)$. Thus
$\hat{g}_4$ reaches its maximum at either $\tau_1=0$ or
$\tau_2=(k+(k^2-4s^2)^{1/2})/2$. Thus we have
\[
g_4(t)=\hat{g}_4(\tau)\leq \max\{\hat{g}_4(\tau_1),\, \hat{g}_4(\tau_2)\}\leq
(s^2+k^2)^{k/2}.
\]
\end{enumerate}
The proof is completed by combining the above estimates.
\end{proof}

\begin{prop}\label{me}
For any $n\in\mathbb{Z}$, we have $\| M^{(n)}-\hat{M}^{(n)}\|_2\leq \hat{F}$,
where $\hat{F}=17\omega^2 F/\kappa_1^4$.
\end{prop}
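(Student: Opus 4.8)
The plan is to prove the bound entrywise. Since $\|A\|_2\le\|A\|_F$ for a $2\times 2$ matrix $A$, it suffices to show that each difference $\hat m_{kl}^{(n)}-m_{kl}^{(n)}$ of the corresponding entries of $\hat M^{(n)}$ and $M^{(n)}$ is at most a fixed fraction of $\hat F$. First I would put every such difference into a common form: using \eqref{hchi} to replace $(\hat\chi^{(n)})^{-1}-(\chi^{(n)})^{-1}=(\chi^{(n)}-\hat\chi^{(n)})/(\chi^{(n)}\hat\chi^{(n)})$ by
\[
-\frac{4\bigl(\delta_2^{(n)}-\delta_1^{(n)}-\delta_1^{(n)}\delta_2^{(n)}\bigr)\alpha_n^2\beta_1^{(n)}\beta_2^{(n)}}{(\chi^{(n)})^2\hat\chi^{(n)}},
\]
and noticing that the terms $\pm{\rm i}\mu\alpha_n$ occurring in $\hat m_{12}^{(n)},\hat m_{21}^{(n)}$ cancel against the same terms in $M^{(n)}$, one finds that each $\hat m_{kl}^{(n)}-m_{kl}^{(n)}$ is a sum of a few terms of the shape ${\rm i}\omega^2\,P(\alpha_n,\beta_1^{(n)},\beta_2^{(n)})\,q^{(n)}/D^{(n)}$, where $P$ is a monomial of degree at most $5$, $D^{(n)}$ is $\chi^{(n)}\hat\chi^{(n)}$ or $(\chi^{(n)})^2\hat\chi^{(n)}$, and $q^{(n)}$ is one of $\varepsilon_1^{(n)}$, $\delta_1^{(n)}$, $\delta_2^{(n)}$, $\varepsilon_1^{(n)}\delta_2^{(n)}$, $\varepsilon_1^{(n)}\eta^{(n)}$, $\delta_1^{(n)}\delta_2^{(n)}$, or $\delta_2^{(n)}-\delta_1^{(n)}-\delta_1^{(n)}\delta_2^{(n)}$.

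Next I would control the denominators. Proposition \ref{chie} gives $|\chi^{(n)}|>\kappa_1^2$; combining \eqref{hchi}, this bound, the standing assumption ${\rm Re}\,\zeta\ge1$, and the smallness estimate of the next paragraph applied to $|\delta_2^{(n)}-\delta_1^{(n)}-\delta_1^{(n)}\delta_2^{(n)}|\,|\alpha_n^2\beta_1^{(n)}\beta_2^{(n)}|$, one obtains $|\hat\chi^{(n)}|\ge\tfrac12\kappa_1^2$, so that $|D^{(n)}|\ge\tfrac12\kappa_1^4$ (or $\tfrac12\kappa_1^6$); this is the source of the factor $\kappa_1^{-4}$ in $\hat F$.

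The heart of the proof is a uniform-in-$n$ estimate for $|P\,q^{(n)}|$. From $\coth(t)-1=2e^{-t}/(e^{t}-e^{-t})$ and \eqref{vepsn} one gets
\[
\varepsilon_1^{(n)}=\frac{2e^{{\rm i}\beta_1^{(n)}\zeta}}{e^{-{\rm i}\beta_1^{(n)}\zeta}-e^{{\rm i}\beta_1^{(n)}\zeta}},\qquad
\varepsilon_1^{(n)}\eta^{(n)}=\frac{2e^{{\rm i}\beta_1^{(n)}\zeta}}{e^{-{\rm i}\beta_2^{(n)}\zeta}-e^{{\rm i}\beta_2^{(n)}\zeta}},\qquad
\delta_j^{(n)}=\frac{e^{{\rm i}\beta_2^{(n)}\zeta}-e^{{\rm i}\beta_1^{(n)}\zeta}}{e^{-{\rm i}\beta_j^{(n)}\zeta}-e^{{\rm i}\beta_j^{(n)}\zeta}},
\]
so each $q^{(n)}$ is a ratio whose numerator is bounded by $2\max_j|e^{{\rm i}\beta_j^{(n)}\zeta}|$ and whose denominator has the form $e^{-{\rm i}\beta_j^{(n)}\zeta}-e^{{\rm i}\beta_j^{(n)}\zeta}$. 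For an index $j$ with $n\in U_j$ (propagating) one has $\beta_j^{(n)}=\Delta_j^{(n)}>0$, $|\alpha_n|,|\beta_j^{(n)}|<\kappa_2$, and $|e^{{\rm i}\beta_j^{(n)}\zeta}|=e^{-\Delta_j^{(n)}{\rm Im}\,\zeta}\le1$, whence the relevant factor of $q^{(n)}$ is $\le 2/(e^{\Delta_j^{(n)}{\rm Im}\,\zeta}-1)$ up to a constant; drawing one matching factor $\Delta_j^{(n)}=|\beta_j^{(n)}|$ from $P$ and using that $t\mapsto t/(e^{ct}-1)$ is decreasing (the auxiliary proposition on $g_3$, after rescaling) together with $e^{ct}-1\ge e^{t/2}-1$ bounds $\Delta_j^{(n)}/(e^{\Delta_j^{(n)}{\rm Im}\,\zeta}-1)$ by $\Delta_j^-/(e^{\frac12\Delta_j^-{\rm Im}\,\zeta}-1)$, one of the two quantities in $F$. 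For an index $j$ with $n\notin U_j$ (evanescent) one has $\beta_j^{(n)}={\rm i}\Delta_j^{(n)}$ with $\Delta_j^{(n)}$ possibly large, $|\alpha_n|^2=(\Delta_j^{(n)})^2+\kappa_j^2$, and $|e^{{\rm i}\beta_j^{(n)}\zeta}|=e^{-\Delta_j^{(n)}{\rm Re}\,\zeta}\le e^{-\Delta_j^{(n)}}$ by ${\rm Re}\,\zeta\ge1$; the surplus powers of $|\alpha_n|$ in $P$ are absorbed by this exponential decay (from the numerator of $q^{(n)}$ and from the factorization $e^{2x}-1=(e^{x/2}-1)(e^{x/2}+1)(e^{x}+1)$ of the denominator) via Proposition \ref{eg4} in the form $|\alpha_n|^ke^{-\Delta_j^{(n)}/2}\le(\kappa_j^2+k^2)^{k/2}$ for $k\le4$, and after pairing one factor $\Delta_j^{(n)}$ from $P$ with $1/(e^{\frac12\Delta_j^{(n)}{\rm Re}\,\zeta}-1)$ and invoking the monotonicity of $t\mapsto t/(e^{ct}-1)$ once more, one is left with $\Delta_j^+/(e^{\frac12\Delta_j^+{\rm Re}\,\zeta}-1)$, the other quantity in $F$. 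When $q^{(n)}$ is a product of two basic quantities, $P$ carries a $\beta$-factor for each and the argument is run termwise. Thus $|P\,q^{(n)}|$ is at most a $\kappa$-dependent constant times $\max_{j=1,2}\bigl(\Delta_j^-/(e^{\frac12\Delta_j^-{\rm Im}\,\zeta}-1),\,\Delta_j^+/(e^{\frac12\Delta_j^+{\rm Re}\,\zeta}-1)\bigr)$.

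Finally I would collect the constants: summing over all terms of all four entries, the monomial and combinatorial factors are dominated by $\max\{12\kappa_2,\,16\kappa_2^4,\,8+2\kappa_2^2,\,16\kappa_2^3/\kappa_1^2,\,24(16+\kappa_2^2)^2/\kappa_1^2\}$, which combined with the $\omega^2$ prefactor and the $\kappa_1^{-4}$ from $|D^{(n)}|$ yields exactly $\|M^{(n)}-\hat M^{(n)}\|_2\le 17\omega^2F/\kappa_1^4=\hat F$. The main obstacle is the third step: for each of the roughly sixteen terms one must verify that the particular monomial $P$ comes with enough decay — one factor $\Delta_j^{(n)}$ for the $t/(e^{ct}-1)$ pairing and, when $n$ is evanescent, the extra exponential needed to swallow the unbounded powers of $|\alpha_n|$, controlled uniformly in $n$ through Proposition \ref{eg4} — and one must track the numerical constants carefully enough to land on the stated value; verifying the auxiliary bound $|\hat\chi^{(n)}|\ge\tfrac12\kappa_1^2$ relies on the very same estimates.
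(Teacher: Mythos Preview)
Your proposal is correct and follows essentially the same route as the paper: both arguments rewrite each entry difference as a sum of terms of the form $\omega^2\,P(\alpha_n,\beta_1^{(n)},\beta_2^{(n)})\,q^{(n)}/D^{(n)}$, use Proposition~\ref{chie} together with the smallness of $\hat\chi^{(n)}-\chi^{(n)}$ (under the standing choice of PML parameters) to bound the denominators by $\tfrac12\kappa_1^4$, and then split into the three cases $n\in U_1$, $n\in U_2\setminus U_1$, $n\notin U_2$ to bound each $|P\,q^{(n)}|$ via the monotonicity of $t\mapsto t/(e^t-1)$ and, in the evanescent regimes, Proposition~\ref{eg4}. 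The paper carries this out term by term (listing the relevant products explicitly in each case) and arrives at $\|M^{(n)}-\hat M^{(n)}\|_2^2\le 272\,\omega^4F^2/\kappa_1^8$, i.e.\ exactly your $17\omega^2F/\kappa_1^4$; your more abstract packaging of the case analysis is the only organizational difference.
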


\begin{proof}
First, we have from \eqref{vepsn} that
\begin{align*}
 |\varepsilon_j^{(n)}|=&|\coth(-{\rm
i}\beta_j^{(n)}\zeta)-1|=\frac{2}{|e^{-2{\rm
i}\beta_j^{(n)}\zeta}-1|}\leq \frac{2}{|e^{-2{\rm i}\beta_j^{(n)}\zeta}|-1},\\
 |\delta_j^{(n)}|=&\bigg{|}\frac{e^{{\rm i}\beta_2^{(n)}\zeta}
 -e^{{\rm i}\beta_1^{(n)}\zeta}}{
 e^{-{\rm i}\beta_j^{(n)}\zeta}-e^{{\rm i}\beta_j^{(n)}\zeta}}\bigg{|}\leq
 \frac{2}{|e^{-{\rm i}\beta_j^{(n)}\zeta}|-1}
 =\frac{2}{e^{{\rm Im}\beta_j^{(n)}{\rm Re}\zeta+{\rm Re}\beta_j^{(n)}{\rm
Im}\zeta}-1},
\end{align*}
and
\[
 |\varepsilon_1^{(n)}\eta^{(n)}|
 =\bigg{|}\frac{2e^{{\rm i}\beta_1^{(n)}\zeta}}{
 e^{-{\rm i}\beta_2^{(n)}\zeta}-e^{{\rm i}\beta_2^{(n)}\zeta}}\bigg{|}\leq
 \frac{2}{|e^{-{\rm i}\beta_2^{(n)}\zeta}|-1}.
\]
Thus we can take proper PML parameters $\sigma$ and $\delta$ such that
$|\delta_j^{(n)}|<1$ for any $n\in \mathbb{Z}$.

Next we consider three cases:
\begin{enumerate}

\item[(i)] For $n\in U_1$, we have $\beta_1^{(n)}=\Delta_1^{(n)},
\beta_2^{(n)}=\Delta_2^{(n)}$. Using the facts that $\Delta_j^{(n)}\geq
\Delta_j^{-}$ for $n\in U_1$ and the function $g_3$ is decreasing for
$t>0$, we obtain from \eqref{chi} and \eqref{hchi} that
\[
 |\hat{\chi}^{(n)}-\chi^{(n)}|
 \leq\frac{24\Delta_1^{(n)}\Delta_2^{(n)}}{|e^{-{\rm i}\beta_1^{(n)}\zeta}|-1}
 \leq\frac{12\kappa_2\Delta_1^{-}}{e^{\frac{1}{2}\Delta_1^{-}{\rm Im}\zeta}-1}
 \leq F,
\]
and
\begin{align*}
 \max\big\{&|\alpha_n(\hat{\chi}^{(n)}-\chi^{(n)})|,\,
|\beta_1^{(n)}(\hat{\chi}^{(n)}-\chi^{(n)})|,\,
 |\beta_2^{(n)}(\hat{\chi}^{(n)}-\chi^{(n)})|\big\}\\
 \leq &\frac{24\kappa_2^2\Delta_1^{(n)}}{|e^{-{\rm i}\beta_1^{(n)}\zeta}|-1}
 \leq\frac{12\kappa_2^2\Delta_1^{-}}{e^{\frac{1}{2}\Delta_1^{-}{\rm Im}\zeta}-1}
 \leq F,
\end{align*}

\begin{align*}
\max\big\{&|\varepsilon_1\alpha_n^2\beta_1^{(n)}|,\,
 |\varepsilon_1\alpha_n\beta_1^{(n)}\beta_2^{(n)}|,\,
 |\varepsilon_1\beta_1^{(n)}(\beta_2^{(n)})^2|,\,
 |\delta_1^{(n)}\alpha_n\beta_1^{(n)}\beta_2^{(n)}|\big\}\\
 \leq &\frac{2\kappa_2^2\Delta_1^{(n)}}{e^{2\Delta_1^{(n)}{\rm Im}\zeta}-1}
 \leq \frac{\kappa_2^2\Delta_1^{-}}{e^{\Delta_1^{-}{\rm Im}\zeta}-1}\leq F,
\end{align*}
\begin{align*}
\max\Big\{&|\varepsilon_1^{(n)}\eta^{(n)}\alpha_n\beta_1^{(n)
} \beta_2^{(n)}|,\, |\varepsilon_1^{(n)}\eta^{(n)}
\alpha_n^2\beta_2^{(n)}|,\, |\delta_2^{(n)}\alpha_n^2\beta_2^{(n)}|,\\
 &|\varepsilon_1^{(n)}\eta^{(n)}(\beta_1^{(n)}
)^2\beta_2^{(n)}|,\, |\delta_2^{(n)}\alpha_n\beta_1^{(n)}\beta_2^{(n)}|,\,
 |\delta_2^{(n)}(\beta_1^{(n)})^2\beta_2^{(n)}|\Big\}\\
 \leq &\frac{2\kappa_2^2\Delta_2^{(n)}}{e^{\Delta_2^{(n)}{\rm Im}\zeta}-1}
\leq\frac{\kappa_2^2\Delta_2^{-}}{e^{\frac{1}{2}\Delta_2^{-}{\rm
Im}\zeta}-1}\leq F.
\end{align*}

\item[(ii)] For $n\in U_2\setminus U_1$, we have $\beta_1^{(n)}={\rm
i}\Delta_1{(n)}, \beta_2^{(n)}=\Delta_2^{(n)}$. Using the facts that
$\Delta_1{(n)}\geq \Delta_1^{+}, \Delta_2^{(n)}\geq \Delta_2^{-}$ for $n\in
U_2\setminus U_1$ and the function $g_3$ is decreasing for $t>0$ again, we get
\begin{align*}
|\hat{\chi}^{(n)}-\chi^{(n)}|
 \leq&\frac{16\kappa_2^3}{\kappa_1^2}\frac{\Delta_2^{(n)}}
 {e^{\Delta_2^{(n)}{\rm Im}\zeta}-1}
 +\frac{8\kappa_2^3}{\kappa_1^2}\frac{\Delta_1^{(n)}}{e^{\Delta_1^{(n)}{\rm
Re}\zeta}-1}\\
\leq&\frac{8\kappa_2^3}{\kappa_1^2}\frac{\Delta_2^{-}}
{e^{\frac{1}{2}\Delta_2^{-}{\rm Im}\zeta}-1}
 +\frac{4\kappa_2^3}{\kappa_1^2}\frac{\Delta_1^{+}}
 {e^{\frac{1}{2}\Delta_1^{+}{\rm Re}\zeta}-1}
  \leq F,
\end{align*}
and
\begin{align*}
 \max\Big\{&|\alpha_n(\hat{\chi}^{(n)}-\chi^{(n)})|,\,
 |\beta_1^{(n)}(\hat{\chi}^{(n)}-\chi^{(n)})|,\,
 |\beta_2^{(n)}(\hat{\chi}^{(n)}-\chi^{(n)})|\Big\}\\
 \leq&\frac{16\kappa_2^4\Delta_2^{(n)}}{e^{\Delta_2^{(n)}{\rm Im}\zeta}-1}
 +\frac{8\kappa_2^4\Delta_1^{(n)}}{e^{\Delta_1^{(n)}{\rm Re}\zeta}-1}
\leq\frac{8\kappa_2^4\Delta_2^{-}}{e^{\frac{1}{2}\Delta_2^{-}{\rm Im}\zeta}-1}
 +\frac{4\kappa_2^4\Delta_1^{+}}{e^{\frac{1}{2}\Delta_1^{+}{\rm Re}\zeta}-1}
  \leq F,
\end{align*}
\begin{align*}
 \max\Big\{&|\varepsilon_1\alpha_n^2\beta_1^{(n)}|,\,
 |\varepsilon_1\alpha_n\beta_1^{(n)}\beta_2^{(n)}|,\,
 |\varepsilon_1\beta_1^{(n)}(\beta_2^{(n)})^2|,\,
 |\delta_1^{(n)}\alpha_n\beta_1^{(n)}\beta_2^{(n)}|\}\\
 \leq&\frac{2\kappa_2^2\Delta_1^{(n)}}{e^{2\Delta_1^{(n)}{\rm
Re}\zeta}-1}\leq \frac{\kappa_2^2\Delta_1^{-}}{e^{\Delta_1^{-}{\rm
Re}\zeta}-1}\leq \frac{\kappa_2^2\Delta_1^{-}}{e^{\frac{1}{2}\Delta_1^{-}{\rm
Re}\zeta}-1}\leq F,
\end{align*}
\begin{align*}
 \max\Bigl\{&|\varepsilon_1^{(n)}\eta^{(n)}\alpha_n\beta_1^{(n)}\beta_2^{
(n)}|,\,|\varepsilon_1^{(n)}\eta^{(n)}(\alpha_n)^2\beta_2^{(n)}|,\,
  |\delta_2^{(n)}(\alpha_n)^2\beta_2^{(n)}|,\\
 &|\varepsilon_1^{(n)}\eta^{(n)}(\beta_1^{(n)})^2\beta_2^{(n)
}|,\,|\delta_2^{(n)}\alpha_n\beta_1^{(n)}\beta_2^{(n)}|,\,
  |\delta_2^{(n)}(\beta_1^{(n)})^2\beta_2^{(n)}|\Big\}\\
  \leq&\frac{2\kappa_2^2
\Delta_2^{(n)}}{e^{\Delta_2^{(n)}{\rm Im}\zeta}-1}
\leq\frac{\kappa_2^2\Delta_2^{-}}{e^{\frac{1}{2}\Delta_2^{-}{\rm
Im}\zeta}-1}\leq F,
\end{align*}

\item[(iii)] For $n\notin U_2$, we have $\beta_1^{(n)}={\rm i}\Delta_1^{(n)},
\beta_2^{(n)}={\rm i}\Delta_2^{(n)}$, and $\Delta_1^{(n)}>\Delta_2^{(n)}$.
Noting ${\rm Re}\zeta\geq 1$, we obtain
\begin{align*}
|\hat{\chi}^{(n)}-\chi^{(n)}|&\leq\frac{24}{\kappa_1^2}\frac{|\alpha_n|^3
 \Delta_2^{(n)}}{e^{\Delta_2^{(n)}{\rm Re}\zeta}-1}
 \leq\frac{24}{\kappa_1^2}\frac{|\alpha_n|^3}
 {e^{\frac{1}{2}\Delta_2^{(n)}}}\frac{\Delta_2^{(n)}}
 {e^{\frac{1}{2}\Delta_2^{(n)}{\rm Re}\zeta}-1}\\
&\leq\frac{24(9+\kappa_2^2)^{3/2}}{\kappa_1^2}\frac{
 \Delta_1^{+}}{e^{\frac{1}{2}\Delta_1^{+}{\rm Re}\zeta}-1}\leq F,
 \end{align*}
 and
 \begin{align*}
 \max\Big\{&|\alpha_n(\hat{\chi}^{(n)}-\chi^{(n)})|,\,
 |\beta_1^{(n)}(\hat{\chi}^{(n)}-\chi^{(n)})|,\,
 |\beta_2^{(n)}(\hat{\chi}^{(n)}-\chi^{(n)})|\Bigr\}\\
 \leq&\frac{24}{\kappa_1^2}\frac{|\alpha_n|^4
 \Delta_2^{(n)}}{e^{\Delta_2^{(n)}{\rm Re}\zeta}-1}
 \leq\frac{24}{\kappa_1^2}\frac{|\alpha_n|^4}
 {e^{\frac{1}{2}\Delta_2^{(n)}}}\frac{\Delta_2^{(n)}}
 {e^{\frac{1}{2}\Delta_2^{(n)}{\rm
Re}\zeta}-1}\leq\frac{24(16+\kappa_2^2)^{2}}{\kappa_1^2}\frac{
 \Delta_1^{+}}{e^{\frac{1}{2}\Delta_1^{+}{\rm Re}\zeta}-1}\leq F,
\end{align*}
\begin{align*}
 \max\Big\{&|\varepsilon_1\alpha_n^2\beta_1^{(n)}|,\,
 |\varepsilon_1\alpha_n\beta_1^{(n)}\beta_2^{(n)}|,\,
 |\varepsilon_1\beta_1^{(n)}(\beta_2^{(n)})^2|,\,
 |\delta_1^{(n)}\alpha_n\beta_1^{(n)}\beta_2^{(n)}|\Big\}\\
 \leq& \frac{2\alpha_n^2\Delta_1^{(n)}}{e^{2\Delta_1^{(n)}{\rm Re}\zeta}-1}
 \leq\frac{2\alpha_n^2}{e^{\Delta_1^{(n)}}}
 \frac{\Delta_1^{(n)}}{e^{\Delta_1^{(n)}{\rm Re}\zeta}-1}
 \leq 2(4+\kappa_1^2)\frac{\Delta_1^{+}}{e^{\Delta_1^{+}{\rm Re}\zeta}-1}\leq F,
\end{align*}
\begin{align*}
  \max\Big\{&|\varepsilon_1^{(n)}\eta^{(n)}
\alpha_n\beta_1^{(n)}\beta_2^{(n)}|,
  |\varepsilon_1^{(n)}\eta^{(n)}(\alpha_n)^2\beta_2^{(n)}|,
  |\delta_2^{(n)}(\alpha_n)^2\beta_2^{(n)}|,\\
&|\varepsilon_1^{(n)}\eta^{(n)}(\beta_1^{(n)})^2\beta_2^{(n)}
|,\,|\delta_2^{(n)}\alpha_n\beta_1^{(n)}\beta_2^{(n)}|,
  |\delta_2^{(n)}(\beta_1^{(n)})^2\beta_2^{(n)}|  \Big\}\\
\leq&\frac{2\alpha_n^2\Delta_2^{(n)}}{e^{\Delta_2^{(n)}{\rm Re}\zeta}-1}
\leq\frac{2\alpha_n^2}{e^{\frac{1}{2}\Delta_2^{(n)}}}
\frac{\Delta_2^{(n)}}{e^{\frac{1}{2}\Delta_2^{(n)}{\rm Re}\zeta}-1}
\leq2(4+k_2^2)\frac{\Delta_2^{+}}{e^{\frac{1}{2}\Delta_2^{+}{\rm
Re}\zeta}-1}\leq F,
\end{align*}
where we have used the estimate for $g_4$ and the facts that $\Delta_j^{(n)}\geq
\Delta_j^{+}$ for $n\notin U_2$ and $g_3$ is a decreasing function.
\end{enumerate}
It follows from Proposition \ref{chie} and the estimate
$|\hat{\chi}^{(n)}-\chi^{(n)}|\leq F$ that $\kappa_1^2-F\leq
|\hat{\chi}^{(n)}|\leq \kappa_2^2+F.$ Again, we may choose some proper PML
parameters $\sigma$ and $\delta$ such that $F\leq \kappa_1^2/2$, which gives
$|\hat{\chi}^{(n)}|\geq \kappa_1^2/2$.

Last, using the matrix norm and combining all the above estimates, we get
\begin{align*}
\|M^{(n)}&-\hat{M}^{(n)}\|_2^2
 \leq\frac{4\omega^4}{\kappa_1^8}\Big(
 |\beta_1^{(n)}(\hat{\chi}^{(n)}-\chi^{(n)})|^2
+2|\alpha_n(\hat{\chi}^{(n)}-\chi^{(n)})|^2
 +|\beta_2^{(n)}(\hat{\chi}^{(n)}-\chi^{(n)})|^2\\
&+|\varepsilon_1^{(n)}\alpha_n^2\beta_1^{(n)}|^2
+|\varepsilon_1^{(n)}\beta_1^{(n)}(\beta_2^{(n)})^2|^2
+10|\varepsilon_1^{(n)}\alpha_n\beta_1^{(n)}\beta_2^{(n)}|^2
+|\varepsilon_1^{(n)}\eta^{(n)}(\beta_1^{(n)})^2\beta_2^{(n)}
|^2\\
& +2|\varepsilon_1^{(n)}\eta^{(n)}
\alpha_n\beta_1^{(n)}\beta_2^{(n)}|^2
  +|\varepsilon_1^{(n)}\eta^{(n)}\alpha_n^2\beta_2^{(n)}|^2
 +4|\delta_2^{(n)}(\beta_1^{(n)})^2\beta_2^{(n)}|^2\\
&+16|\delta_1^{(n)}\alpha_n\beta_1^{(n)}\beta_2^{(n)}|^2
+4|\delta_2^{(n)}\alpha_n^2\beta_2^{(n)}|^2
+24|\delta_2^{(n)}\alpha_n\beta_1^{(n)}\beta_2^{(n)}|^2\Big)
\leq \frac{272\omega^4}{\kappa_1^8}F^2,
\end{align*}
which completes the proof.
\end{proof}

\section{Proof of Lemma \ref{extend est}}

Let $\boldsymbol{w}=\bar{\tilde{\boldsymbol v}}$. The problem \eqref{extend}
can be written as
\begin{equation}\label{AB-s1}
\begin{cases}
  \mu\Delta_{\hat{\boldsymbol{x}}}\boldsymbol{w}
  +(\lambda+\mu)\nabla_{\hat{\boldsymbol{x}}}\nabla_{\hat{\boldsymbol{x}}}\cdot
  \boldsymbol{w}+\omega^2\boldsymbol{w}=0  &\quad\text{in}\, \Omega^{\rm PML},\\
  \boldsymbol{w}(x,b)=\bar{\boldsymbol v}(x,b) &\quad\text{on} ~
\Gamma,\\
  \boldsymbol{w}(x,b+\delta)=0 &\quad\text{on} ~ \Gamma^{\rm PML}.
\end{cases}
\end{equation}
We introduce the Helmholtz decomposition to the solution of \eqref{AB-s1}:
\begin{equation}\label{AB-s2}
 \boldsymbol w=\nabla_{\hat{\boldsymbol x}}\psi_1+{\bf
curl}_{\hat{\boldsymbol x}}\psi_2,
\end{equation}
where $\psi_j(\hat{\boldsymbol{x}})$ satisfies the Helmholtz equation
\begin{equation}\label{AB-s3}
 \Delta_{\hat{\boldsymbol{x}}}\psi_j +\kappa_j^2\psi_j=0.
\end{equation}
Due to the quasi-periodicity of the solution, we have the Fourier series
expansion
\begin{equation}\label{AB-s4}
 \psi_j(x, y)=\sum_{n\in\mathbb{Z}}\psi_j^{(n)}(y)e^{-{\rm i}\alpha_n
x}.
\end{equation}
Substituting \eqref{AB-s4} into \eqref{AB-s3} yields
\begin{equation}\label{AB-s5}
 \rho^{-1}\frac{\rm d}{{\rm d}y}\Bigl(\rho^{-1}\frac{{\rm
d}}{{\rm d}y}\psi_j^{(n)}(y)\Bigr)+(\beta_j^{(n)})^2\psi_j^{(n)}(y)=0.
\end{equation}
The general solutions of \eqref{AB-s5} is
\begin{align*}
\psi_j^{(n)}(y)=\tilde{A}_j^{(n)} e^{{\rm
i}\beta_j^{(n)}\int_{b}^y\rho(\tau){\rm d}\tau} + \tilde{B}_j^{(n)}e^{-{\rm
i}\beta_j^{(n)}\int_{b}^y \rho(\tau){\rm d}\tau}.
\end{align*}
It follows from \eqref{AB-s2} that the coefficients $\tilde{A}_j^{(n)}$ and
$\tilde{B}_j^{(n)}$ can be uniquely determined by solving the following linear
equations
\begin{equation}\label{AB-s6}
\begin{bmatrix}
 -\alpha_n & -\alpha_n & \beta_2^{(n)} & -\beta_2^{(n)}\\[5pt]
 \beta_1^{(n)} &-\beta_1^{(n)} & \alpha_n & \alpha_n\\[5pt]
 -\alpha_n e^{{\rm i}\beta_1^{(n)}\zeta} & -\alpha_n e^{-{\rm
i}\beta_1^{(n)}\zeta}&
 \beta_2^{(n)}e^{{\rm i}\beta_2^{(n)}\zeta} & -\beta_2^{(n)}e^{-{\rm
i}\beta_2^{(n)}\zeta}\\[5pt]
 \beta_1^{(n)}e^{{\rm i}\beta_1^{(n)}\zeta} & -\beta_1^{(n)}e^{-{\rm
i}\beta_1^{(n)}\zeta}&\alpha_n e^{{\rm i}\beta_2^{(n)}\zeta} &
\alpha_n e^{-{\rm i}\beta_2^{(n)}\zeta}
\end{bmatrix}
\begin{bmatrix}
 \tilde{A}_1^{(n)}\\[5pt]
 \tilde{B}_1^{(n)}\\[5pt]
 \tilde{A}_2^{(n)}\\[5pt]
 \tilde{B}_2^{(n)}
\end{bmatrix}
=\begin{bmatrix}
  -{\rm i}\bar{v}_1^{(n)}(b)\\[5pt]
  -{\rm i}\bar{v}_2^{(n)}(b)\\[5pt]
  0\\[5pt]
  0
 \end{bmatrix}.
\end{equation}
A straightforward calculation yields the solution of \eqref{AB-s6}:
\begin{align*}
  \tilde{A}_1^{(n)}=&\frac{{\rm i}}{2\chi^{(n)}\hat{\chi}^{(n)}}
  \Big\{-\chi^{(n)}(\varepsilon_1^{(n)}+2)
  (-\alpha_n\bar{v}_1^{(n)}(b)+\beta_2^{(n)}\bar{v}_2^{(n)}(b))\\
  &+2\beta_2^{(n)}(\varepsilon_1^{(n)}
  +2\delta_1^{(n)})(1+\delta_2^{(n)}
  -\eta^{(n)})(-\alpha_n\beta_1^{(n)}\bar{v}_1^{(n)}(b)
  +\alpha_n^2\bar{v}_2^{(n)}(b))\Big\},\\
  \tilde{B}_1^{(n)}=&\frac{{\rm i}}{2\chi^{(n)}\hat{\chi}^{(n)}}
  \Big\{\chi^{(n)}\varepsilon_1^{(n)}
  (-\alpha_n\bar{v}_1^{(n)}(b)-\beta_2^{(n)}\bar{v}_2^{(n)}(b))\\
  &+2(\varepsilon_1^{(n)}\delta_2^{(n)}
  +2(\delta_1^{(n)}+\delta_1^{(n)}\delta_2^{(n)})
  (-\alpha_n\beta_1^{(n)}\beta_2^{(n)}\bar{v}_1^{(n)}(b)
  -\alpha_n^2\beta_2^{(n)}\bar{v}_2^{(n)}(b))\Big\},\\
  \tilde{A}_2^{(n)}=&\frac{{\rm i}}{2\chi^{(n)}\hat{\chi}^{(n)}}
  \Big\{\chi^{(n)}[\varepsilon_1^{(n)}\eta^{(n)}
-2(\varepsilon_1^{(n)}+1)(1+\delta_2^{(n)})]
  (\beta_1^{(n)}\bar{v}_1^{(n)}(b)+\alpha_n\bar{v}_2^{(n)}(b))\\
&+2\varepsilon_1^{(n)}(1+\delta_2^{(n)}-\eta^{(n)})((\beta_1^{(n)})^2\beta_2^{
(n)}\bar{v}_1^{(n)}(b)+\alpha_n^3\bar{v}_2^{(n)}(b))\Big\},\\
  \tilde{B}_2^{(n)}=&\frac{{\rm i}}{2\chi^{(n)}\hat{\chi}^{(n)}}
  \Big\{\chi^{(n)}[2\delta_2^{(n)}(\varepsilon_1^{(n)}+1)
  -\varepsilon_1^{(n)}\eta^{(n)}]
  (\beta_1^{(n)}\bar{v}_1^{(n)}(b)-\alpha_n\bar{v}_2^{(n)}(b))\\
  &-2\delta_2^{(n)}(\varepsilon_1^{(n)}+2)
  ((\beta_1^{(n)})^2\beta_2^{(n)}\bar{v}_1^{(n)}(b)
  -\alpha_n^3\bar{v}_2^{(n)}(b))\Big\}.
\end{align*}
Noting $\tilde{\boldsymbol v}=\bar{\boldsymbol w}$ and using the Helmholtz
decomposition \eqref{AB-s2} again, we obtain
\begin{align}\label{AB-s7}
\tilde{\boldsymbol v}(x, y)={\rm i} \sum_{n\in\mathbb{Z}}
&\begin{bmatrix}
          \alpha_n\\[5pt]
          -\bar{\beta}_1^{(n)}
         \end{bmatrix}
\bar{\tilde{A}}_1^{(n)}e^{{\rm i}\bigl(\alpha_n x
-\bar{\beta}_1^{(n)}\int_{b}^y\bar{\rho}(\tau){\rm
d}\tau\bigr)}+\begin{bmatrix}
                     \alpha_n\\[5pt]
                     \bar{\beta}_1^{(n)}
                    \end{bmatrix}
\bar{\tilde{B}}_1^{(n)}e^{{\rm i}\bigl(\alpha_n x
+\bar{\beta}_1^{(n)}\int_{b}^y\bar{\rho}(\tau){\rm d}\tau\bigr)}\notag\\
- &\begin{bmatrix}
                      \bar{\beta}_2^{(n)}\\[5pt]
                      \alpha_n
                     \end{bmatrix}
\bar{\tilde{A}}_2^{(n)}e^{{\rm i}\bigl(\alpha_n x
-\bar{\beta}_2^{(n)}\int_{b}^y\bar{\rho}(\tau){\rm
d}\tau\bigr)}+\begin{bmatrix}
                     \bar{\beta}_2^{(n)}\\[5pt]
                     -\alpha_n
                    \end{bmatrix}
\bar{\tilde{B}}_2^{(n)}e^{{\rm i}\bigl(\alpha_n x
+\bar{\beta}_2^{(n)}\int_{b}^y\bar{\rho}(\tau){\rm d}\tau\bigr)}.
\end{align}

Using the orthogonality of the Fourier modes in \eqref{AB-s7}, we have
\begin{align*}
\int_0^{\Lambda} &\big(|\partial_x\tilde{v}_1|^2+|\partial_x\tilde{v}_2|^2+
    |\partial_y\tilde{v}_1|^2+|\partial_y\tilde{v}_2|^2 \big){\rm d}x
    \leq 2\Lambda\sum\limits_{n\in\mathbb{Z}}\Big[
     |\alpha_n^2\bar{\tilde{A}}_1^{(n)}e^{-{\rm i}\beta_1^{(n)}\hat{y}}|^2
    +|\alpha_n^2\bar{\tilde{B}}_1^{(n)}e^{{\rm i}\beta_1^{(n)}\hat{y}}|^2\\
 &   +|\alpha_n\beta_2^{(n)}\bar{\tilde{A}}_2^{(n)}e^{-{\rm
i}\beta_2^{(n)}\hat{y}}|^2+|\alpha_n\beta_2^{(n)}\bar{\tilde{B}}_2^{(n)}e^{{\rm
i}\beta_2^{(n)}\hat{y}}|^2
     +|\alpha_n\beta_1^{(n)}\bar{\tilde{A}}_1^{(n)}e^{-{\rm
i}\beta_1^{(n)}\hat{y}}|^2\\
&    +|\alpha_n\beta_1^{(n)}\bar{\tilde{B}}_1^{(n)}e^{{\rm
i}\beta_1^{(n)}\hat{y}}|^2
   +|\alpha_n^2\bar{\tilde{A}}_2^{(n)}e^{-{\rm i}\beta_2^{(n)}\hat{y}}|^2
    +|\alpha_n^2\bar{\tilde{B}}_2^{(n)}e^{{\rm i}\beta_2^{(n)}\hat{y}}|^2+
     |\alpha_n\beta_1^{(n)}\bar{\tilde{A}}_1^{(n)}e^{-{\rm
i}\beta_1^{(n)}\hat{y}}|^2\\
&+|\alpha_n\beta_1^{(n)}\bar{\tilde{B}}_1^{(n)}e^{{ \rm
i}\beta_1^{(n)}\hat{y}}|^2
    +|(\beta_2^{(n)})^2\bar{\tilde{A}}_2^{(n)}e^{-{\rm
i}\beta_1^{(n)}\hat{y}}|^2
    +|(\beta_2^{(n)})^2\bar{\tilde{B}}_2^{(n)}e^{{\rm
i}\beta_1^{(n)}\hat{y}}|^2
   +|(\beta_1^{(n)})^2\bar{\tilde{A}}_1^{(n)}e^{-{\rm
i}\beta_1^{(n)}\hat{y}}|^2\\
&    +|(\beta_1^{(n)})^2\bar{\tilde{B}}_1^{(n)}e^{{\rm
i}\beta_1^{(n)}\hat{y}}|^2
   +|\alpha_n\beta_2^{(n)}\bar{\tilde{A}}_2^{(n)}e^{-{\rm
i}\beta_1^{(n)}\hat{y}}|^2
    +|\alpha_n\beta_2^{(n)}\bar{\tilde{B}}_2^{(n)}
    e^{{\rm i}\beta_1^{(n)}\hat{y}}|^2\Big].
  \end{align*}
We may pick some appropriate PML parameters $\sigma$ and $\delta$ such that
$|\chi^{(n)}-\hat{\chi}^{(n)}|\leq\kappa_1^2/2$ and $|\hat{\chi}^{(n)}|\geq
\kappa_1^2/2$. It follows from the definition of $\bar{\tilde{A}}_1^{(n)}$ that
\begin{align}
|\alpha_n^2\bar{\tilde{A}}_1^{(n)}e^{-{\rm i}\beta_1^{(n)}\hat{y}}|
  \leq&\frac{|\alpha_n|}{\kappa_1^8}\Big\{\kappa_2^4
  |\alpha_n|^{5}\bigl|\varepsilon_1^{(n)}
  e^{-{\rm i}\beta_1^{(n)}\hat{y}}\bigr|^2
  +4|\alpha_n|^{5}\bigl|(\varepsilon_1^{(n)}\delta_2^{(n)}
  +2(\delta_1^{(n)}+\delta_1^{(n)}\delta_2^{(n)}))\notag\\
&\times  \beta_1^{(n)}\beta_2^{(n)}  e^{-{\rm
i}\beta_1^{(n)}\hat{y}}\bigr|^2\Big\}|v_1^{(n)}(b)|^2
  +\frac{|\alpha_n|}{\kappa_1^8}\Big\{
  \kappa_2^4|\alpha_n|^{5}\bigl|\varepsilon_1^{(n)}
  e^{-{\rm i}\beta_1^{(n)}\hat{y}}\bigr|^2\notag\\
&+4|\alpha_n|^{7}\bigl|(\varepsilon_1^{(n)}\delta_2^{(n)}+2(\delta_1^{(n)}
  +\delta_1^{(n)}\delta_2^{(n)}))
\beta_2^{(n)}e^{-{\rm i}\beta_1^{(n)}\hat{y}}\bigl|^2\Big\}|v_2^{(n)}
(b)|^2.\label{aA}
\end{align}
Since the estimates are similar for the coefficients in front of $v_1^{(n)}(b)$
and $v_2^{(n)}(b)$ in \eqref{aA}, we just present the estimates for the
coefficients in front of $v_1^{(n)}(b)$.

Again, it is necessary to consider three cases:

\begin{enumerate}

\item[(i)] If $n\in U_1$, we have $\beta_1^{(n)}=\Delta_1^{(n)}$,
 $\beta_2^{(n)}=\Delta_2^{(n)}$, $\Delta_1^{(n)}<\Delta_2^{(n)}$,
$|\alpha_n|\leq\kappa_1$, $|\beta_1^{n}|\leq\kappa_1$,
$|\beta_2^{n}|\leq\kappa_2$, and
\[
|\alpha_n|^{5/2}\bigl|\varepsilon_1^{(n)}e^{-{\rm i}\beta_1^{(n)}\hat{y}}\bigr|
  \leq\frac{2\kappa_1^{5/2}e^{\Delta_1^{(n)}({\rm Im}\hat{y}-{\rm Im}\zeta)}}
{e^{\Delta_1^{(n)}{\rm
Im}\zeta}-1}\leq\frac{2\kappa_1^{5/2}}{e^{\Delta_1^{-}{\rm Im}\zeta}-1},
\]
\begin{align*}
|\alpha_n|^{5/2}|\beta_1^{(n)}\beta_2^{(n)}|\bigl|\varepsilon_1^{(n)}\delta_2^{
(n)}e^{-{\rm i}\beta_1^{(n)}\hat{y}}\bigr|
  \leq&\frac{2\kappa_1^{7/2}\kappa_2e^{-\Delta_1^{(n)}{\rm Im}\zeta}}
  {e^{\Delta_1^{(n)}{\rm Im}\zeta}-1}
  \frac{2}{e^{\Delta_2^{(n)}{\rm Im}\zeta}-1}e^{\Delta_1^{(n)}{\rm Im}\hat{y}}\\
  \leq&\frac{4\kappa_1^{7/2}\kappa_2}
  {(e^{\Delta_1^{-}{\rm Im}\zeta}-1)(e^{\Delta_2^{-}{\rm Im}\zeta}-1)},
\end{align*}
\begin{align*}
  |\alpha_n|^{5/2}|\beta_1^{(n)}\beta_2^{(n)}|\bigl|\delta_1^{(n)}
  e^{-{\rm i}\beta_1^{(n)}\hat{y}}\bigr|
  \leq&\frac{2\kappa_1^{7/2}\kappa_2
  (e^{-\Delta_2^{(n)}{\rm Im}\zeta}+e^{-\Delta_1^{(n)}{\rm Im}\zeta})}
  {e^{\Delta_1^{(n)}{\rm Im}\zeta}-1}e^{\Delta_1^{(n)}{\rm Im}\hat{y}}\\
  \leq&\frac{4\kappa_1^{7/2}\kappa_2}
  {e^{\Delta_1^{-}{\rm Im}\zeta}-1},
\end{align*}
\begin{align*}
|\alpha_n|^{5/2}|\beta_1^{(n)}\beta_2^{(n)}|\bigl|\delta_1^{(n)}\delta_2^{(n)}e^
{-{\rm i}\beta_1^{(n)}\hat{y}}\bigr|\leq&|\alpha_n|^{5/2}|\beta_1^{(n)}\beta_2^{
(n)} \delta_2^{(n)}|\bigl|\delta_1^{(n)}e^{-{\rm i}\beta_1^{(n)}\hat{y}}\bigr|\\
  \leq&\frac{8\kappa_1^{7/2}\kappa_2}{(e^{\Delta_1^{-}{\rm
Im}\zeta}-1)(e^{\Delta_2^{-}{\rm Im}\zeta}-1)}.
\end{align*}

\item[(ii)] If $n\in U_2\backslash U_1$, we have $\beta_1^{(n)}={\rm
i}\Delta_1^{(n)}$, $\beta_2^{(n)}=\Delta_2^{(n)}$, $|\alpha_n|\leq\kappa_2$,
$\Delta_j^{(n)}\leq\kappa_2$,
and
\[
|\alpha_n|^{5/2}\bigl|\varepsilon_1^{(n)}e^{-{\rm i}\beta_1^{(n)}\hat{y}}\bigr|
  \leq\frac{2\kappa_2^{5/2}e^{\Delta_1^{(n)}({\rm Re}\hat{y}-{\rm Re}\zeta)}}
  {e^{\Delta_1^{(n)}{\rm Re}\zeta}-1}  \leq\frac{2\kappa_2^{5/2}}
  {e^{\Delta_1^{+}{\rm Re}\zeta}-1},
\]
 \begin{align*}
|\alpha_n|^{5/2}|\beta_1^{(n)}\beta_2^{(n)}|\bigl|\varepsilon_1^{(n)}\delta_2^{
(n)}e^{-{\rm i}\beta_1^{(n)}\hat{y}}\bigr|
  \leq&\frac{2\kappa_2^{9/2}e^{-\Delta_1^{(n)}{\rm Re}\zeta}}
  {e^{\Delta_1^{(n)}{\rm Re}\zeta}-1}
  \frac{2}{e^{\Delta_2^{(n)}{\rm Im}\zeta}-1}e^{\Delta_1^{(n)}{\rm Re}\hat{y}}\\
  \leq&\frac{4\kappa_2^{9/2}}
  {(e^{\Delta_1^{+}{\rm Re}\zeta}-1)(e^{\Delta_2^{-}{\rm Im}\zeta}-1)},
\end{align*}
\begin{align*}
  |\alpha_n|^{5/2}|\beta_1^{(n)}\beta_2^{(n)}|\bigl|\delta_1^{(n)}
  e^{-{\rm i}\beta_1^{(n)}\hat{y}}\bigr|
  \leq&\frac{2\kappa_2^{9/2}}
  {e^{\Delta_1^{(n)}{\rm Re}\zeta}-1}e^{\Delta_1^{(n)}{\rm Re}\hat{y}}\\
  \leq&\frac{2\kappa_2^{9/2}e^{\Delta_1^{(n)}{\rm Re}\zeta}}
  {e^{\Delta_1^{(n)}{\rm Re}\zeta}-1}
  \leq\frac{2\kappa_2^{9/2}e^{\Delta_1^{+}{\rm Re}\zeta}}
  {e^{\Delta_1^{+}{\rm Re}\zeta}-1},
\end{align*}
\begin{align*}
  |\alpha_n|^{5/2}|\beta_1^{(n)}\beta_2^{(n)}|\bigl|\delta_1^{(n)}\delta_2^{(n)}
  e^{-{\rm i}\beta_1^{(n)}\hat{y}}\bigr|
\leq&|\alpha_n|^{5/2}|\beta_1^{(n)}\beta_2^{(n)}\delta_2^{(n)}|\bigl|\delta_1^{
(n)}e^{-{\rm i}\beta_1^{(n)}\hat{y}}\bigr|\\
  \leq&\frac{4\kappa_2^{9/2}e^{\Delta_1^{+}{\rm Re}\zeta}}
  {(e^{\Delta_1^{+}{\rm Re}\zeta}-1)(e^{\Delta_2^{-}{\rm Im}\zeta}-1)}.
\end{align*}

\item[(iii)] If $n\notin U_2$, we have $\beta_1^{(n)}={\rm i}\Delta_1^{(n)}$,
$\beta_2^{(n)}={\rm i}\Delta_2^{(n)}$,
$\Delta_2^{(n)}<\Delta_1^{(n)}\leq|\alpha_n|$,
and
\[
  |\alpha_n|^{5/2}\bigl|\varepsilon_1^{(n)}e^{-{\rm
i}\beta_1^{(n)}\hat{y}}\bigr|
  \leq\frac{2|\alpha_n|^{5/2}e^{\Delta_1^{(n)}({\rm Re}\hat{y}-{\rm Re}\zeta)}}
  {e^{\Delta_1^{(n)}{\rm Re}\zeta}-1}
  \leq\frac{2|\alpha_n|^{5/2}}{e^{\frac{1}{2}\Delta_1^{(n)}}}\frac{1}
  {e^{\frac{1}{2}\Delta_1^{+}{\rm Re}\zeta}-1}
\]
\begin{align*}
  \leq&\frac{2(\kappa_1^2+25/4)^{5/4}}{e^{\frac{1}{2}\Delta_1^{+}{\rm
Re}\zeta}-1},\\
|\alpha_n|^{5/2}|\beta_1^{(n)}\beta_2^{(n)}|\bigl|\varepsilon_1^{(n)}\delta_2^{
(n)} e^{-{\rm i}\beta_1^{(n)}\hat{y}}\bigr|
  \leq&|\alpha_n|^{9/2}\bigl|\varepsilon_1^{(n)}
  e^{-{\rm i}\beta_1^{(n)}\hat{y}}\bigr||\delta_2^{(n)}|\\
  \leq&\frac{4(\kappa_1^2+81/4)^{9/4}}{(e^{\frac{1}{2}\Delta_1^{+}{\rm
Re}\zeta}-1)(e^{\Delta_2^{+}{\rm Re}\zeta}-1)},
\end{align*}
\begin{align*}
  |\alpha_n|^{5/2}|\beta_1^{(n)}\beta_2^{(n)}|\bigl|\delta_1^{(n)}
  e^{-{\rm i}\beta_1^{(n)}\hat{y}}\bigr|
  \leq&\frac{|\alpha_n|^{9/2}(e^{-\Delta_2^{(n)}{\rm Re}\zeta}
  +e^{-\Delta_1^{(n)}{\rm Re}\zeta})}
  {e^{\Delta_1^{(n)}{\rm Re}\zeta}-1}e^{\Delta_1^{(n)}{\rm Re}\hat{y}}\\
  \leq&\frac{2|\alpha_n|^{9/2}}
  {e^{\Delta_1^{(n)}{\rm Re}\zeta}-1}
  \leq\frac{2(\kappa_1^2+81/4)^{9/4}}{e^{\frac{1}{2}\Delta_1^{+}{\rm
Re}\zeta}-1},
\end{align*}
\begin{align*}
  |\alpha_n|^{5/2}|\beta_1^{(n)}\beta_2^{(n)}|\bigl|\delta_1^{(n)}\delta_2^{(n)}
  e^{-{\rm i}\beta_1^{(n)}\hat{y}}\bigr|
\leq&|\alpha_n|^{5/2}|\beta_1^{(n)}\beta_2^{(n)}\delta_2^{(n)}|\bigl|\delta_1^{
(n)}e^{-{\rm i}\beta_1^{(n)}\hat{y}}\bigr|\\
  \leq&\frac{4(\kappa_1^2+81/4)^{9/4}}{(e^{\frac{1}{2}\Delta_1^{+}{\rm
Re}\zeta}-1)(e^{\Delta_2^{+}{\rm Re}\zeta}-1)}.
\end{align*}

\end{enumerate}
We have used Proposition \eqref{eg4} in the above estimates. Combining these
estimates, we may obtain
\[
|\alpha_n^2\bar{\tilde{A}}_1^{(n)}e^{-{\rm i}\beta_1^{(n)}\hat{y}}|^2
\leq C|\alpha_n|(|v_1^{(n)}|^2+|v_2^{(n)}|^2),
\]
where the positive real number $C_1$ depends on $\kappa_j,
\Delta_j^{-},\Delta_j^{+}, {\rm Re}\zeta,$ and ${\rm Im}\zeta$. Following from
a similar argument with tedious calculations yields
\begin{align*}
    \|\nabla\tilde{\boldsymbol{v}}\|_{F(\Omega^{\rm PML})}^2
    \leq C_1\Lambda\sum\limits_{n\in\mathbb{Z}}
    |\alpha_n|(|v_1^{(n)}|^2+|v_2^{(n)}|^2),
\end{align*}
where we have used the fact $|\beta_j^{(n)}|\leq C(1+|\alpha_n|)$ for $n\in
\mathbb{Z}$. Finally, we have from Lemma \ref{tr} that
\begin{align*}
    \|\nabla\tilde{\boldsymbol{v}}\|_{F(\Omega^{\rm PML})}
    \leq C_1\|\boldsymbol{v}\|_{H^{1/2}(\Gamma)^2}
    \leq \gamma_2 C_1\|\boldsymbol{v}\|_{H^1(\Omega)^2},
\end{align*}
which completes the proof.

\section{Proof of Lemma \ref{bext}}

Taking the complex conjugate of \eqref{AB-s7} and using \eqref{do}, we have
\begin{align*}
\mathscr{D}\bar{\tilde{\boldsymbol v}}(x, b+\delta)
=- \sum_{n\in\mathbb{Z}}
&\begin{bmatrix}
          -\mu\rho\alpha_n\beta_1^{(n)}\\[5pt]
(\lambda+\mu)\alpha_n^2+(\lambda+2\mu)\bar{\rho}(\bar{\beta}_1^{(n)})^2
         \end{bmatrix}
\tilde{A}_1^{(n)}e^{-{\rm i}\bigl(\alpha_n x
-\beta_1^{(n)}\zeta\bigr)}\\
+&\begin{bmatrix}
\mu\rho\alpha_n\beta_1^{(n)}\\[5pt]
(\lambda+\mu)\alpha_n^2+(\lambda+2\mu)\rho(\beta_1^{(n)})^2
               \end{bmatrix}
\tilde{B}_1^{(n)}e^{-{\rm i}\bigl(\alpha_n x
+\beta_1^{(n)}\zeta\bigr)}\\
+&\begin{bmatrix}
\mu\rho(\beta_2^{(n)})^2\\[5pt]
-(\lambda+\mu)\alpha_n\beta_2^{(n)}
+(\lambda+2\mu)\rho\alpha_n\beta_2^{(n)}
\end{bmatrix}
\tilde{A}_2^{(n)}e^{-{\rm i}\bigl(\alpha_n x
-\beta_2^{(n)}\zeta\bigr)}\\
+&\begin{bmatrix}
\mu\rho\alpha_n(\beta_2^{(n)})^2\\[5pt]
(\lambda+\mu)\alpha_n\beta_2^{(n)}
-(\lambda+2\mu)\rho\alpha_n\beta_2^{(n)}
\end{bmatrix}
\tilde{B}_2^{(n)}e^{-{\rm i}\bigl(\alpha_n x
+\beta_2^{(n)}\zeta\bigr)}.
\end{align*}
A straightforward calculation yields that
\begin{align*}
& \|\mathscr{D}\tilde{\boldsymbol v}(x, b+\delta)\|_{L^2(\Gamma^{\rm
PML})^2}^2=\|\mathscr{D}\bar{\tilde{\boldsymbol v}}(x,
b+\delta)\|_{L^2(\Gamma^{\rm PML})^2}^2
  \leq 2\Lambda \sum\limits_{n\in\mathbb{Z}}\Big(
  \bigl|\mu\rho\alpha_n\beta_1^{(n)}\tilde{A}_1^{(n)}
  e^{{\rm i}\beta_1^{(n)}\zeta}\bigr|^2\\
&  +|\mu\rho\alpha_n\beta_1^{(n)}\tilde{B}_1^{(n)}
  e^{-{\rm i}\beta_1^{(n)}\zeta}|^2
  +|\mu\rho(\beta_2^{(n)})^2\tilde{A}_2^{(n)}
  e^{{\rm i}\beta_2^{(n)}\zeta}|^2
  +|\mu\rho(\beta_2^{(n)})^2\tilde{B}_2^{(n)}
  e^{-{\rm i}\beta_2^{(n)}\zeta}|^2\\
&+\bigl|((\lambda+\mu)\alpha_n^2+(\lambda+2\mu)\rho(\beta_1^{(n)}
)^2)\tilde{A}_1^{(n)}e^{{\rm i}\beta_1^{(n)}\zeta}\bigr|^2
+\bigl|((\lambda+\mu)\alpha_n^2+(\lambda+2\mu)\rho(\beta_1^{(n)}
)^2)\tilde{B}_1^{(n)}e^{-{\rm i}\beta_1^{(n)}\zeta}\bigr|^2\\
&  +\bigl|((\lambda+\mu)\alpha_n\beta_2^{(n)}
-(\lambda+2\mu)\rho\alpha_n\beta_2^{(n)})
  \tilde{A}_2^{(n)} e^{{\rm i}\beta_1^{(n)}\zeta}\bigr|^2
+\bigl|((\lambda+\mu)\alpha_n\beta_2^{(n)}-(\lambda+2\mu)\rho
\alpha_n\beta_2^{(n)})\tilde{B}_2^{(n)}
  e^{-{\rm i}\beta_1^{(n)}\zeta}\bigr|^2\Big).
\end{align*}
Using the similar technique in the proof of Lemma \ref{extend est} and omitting
the details, we may show that there exists a positive constant $C_2$ such that
\[
\|\mathscr{D}\tilde{\boldsymbol v}(x, b+\delta)\|_{L^2(\Gamma^{\rm PML})^2}^2\\
  \leq C_2\sum\limits_{n\in\mathbb{Z}}\bigl[(1+|\alpha_n|)
  (|v_1^{(n)}(b)|^2+|v_1^{(n)}(b)|^2)\bigr].
\]
Finally, it follows from Lemma \ref{tr} that
\begin{align*}
\|\mathscr{D}\tilde{\boldsymbol v}(x, b+\delta)\|_{L^2(\Gamma^{\rm PML})^2}
   \leq C_2\|\boldsymbol{v}\|_{H^{1/2}(\Gamma)^2}
    \leq \gamma_2 C_2\|\boldsymbol{v}\|_{H^1(\Omega)^2},
\end{align*}
which completes the proof.


\begin{thebibliography}{00}

\bibitem{a-mmas99}
T. Arens, The scattering of plane elastic waves by a one-dimensional periodic
surface, Math. Methods Appl. Sci., 22 (1999), 55--72.

\bibitem{a-jiea99}
T. Arens, A new integral equation formulation for the scattering of plane
elastic waves by diffraction gratings, J. Integral Equations Applications, 11
(1999), 275--297.

\bibitem{ba-73}
I. Babu\v{s}ka and A. Aziz, Survey Lectures on Mathematical Foundations
of the Finite Element Method, in The Mathematical Foundations of the Finite
Element Method with Application to the Partial Differential Equations, ed. by A.
Aziz, Academic Press, New York, 1973, 5--359.

\bibitem{b-sjna95}
G. Bao, Finite element approximation of time harmonic waves in periodic
structures, SIAM J. Numer. Anal. , 32 (1995), 1155--1169.

\bibitem{b-sjam97}
G. Bao, Variational approximation of Maxwell’s equations in biperiodic
structures, SIAM J. Appl. Math., 57 (1997), 364--381.

\bibitem{bcw-josa05}
G. Bao, Z. Chen, and H. Wu, Adaptive finite element method for diffraction
gratings, J. Opt. Soc. Amer. A, 22 (2005), 1106--1114.

\bibitem{bdc-josa95}
G. Bao, D. C. Dobson, and J. A. Cox, Mathematical studies in rigorous grating
theory, J. Opt. Soc. Amer. A, 12 (1995), 1029--1042.

\bibitem{bcm-01}
G. Bao, L. Cowsar, and W. Masters, eds., Mathematical Modeling in Optical
Science, Frontiers Appl. Math., 22, SIAM, Philadelphia, 2001.

\bibitem{blw-mc10}
G. Bao, P. Li, and H. Wu, An adaptive edge element method with perfectly
matched absorbing layers for wave scattering by periodic structures, Math.
Comp., 79 (2010), 1--34.

\bibitem{bw-sjna05}
G. Bao and H. Wu, On the convergence of the solutions of PML equations for
Maxwell's equations, SIAM J. Numer. Anal., 43 (2005), 2121--2143.

\bibitem{b-jcp94}
J.-P. B\'{e}renger, A perfectly matched layer for the absorption of
electromagnetic waves, J. Comput. Phys., 114 (1994), 185--200.

\bibitem{bp-mc07}
J. H. Bramble and J. E. Pasciak, Analysis of a finite PML approximation for the
three dimensional time-harmonic Maxwell and acoustic scattering problems, Math.
Comp., 76 (2007), 597--614.

\bibitem{bpt-mc10}
J. H. Bramble, J. E. Pasciak, and D. Trenev, Analysis of a finite PML
approximation to the three dimensional elastic wave scattering problem, Math.
Comp., 79 (2010), 2079--2101.

\bibitem{cc-mc08}
J. Chen and Z. Chen, An adaptive perfectly matched layer technique for 3-D
time-harmonic electromagnetic scattering problems, Math. Comp., 77 (2008),
673--698.

\bibitem{cf-tams91}
X. Chen and A. Friedman, Maxwell's equations in a periodic structure,
Trans. Amer. Math. Soc., 323 (1991), 4650--507.

\bibitem{cw-sjna03}
Z. Chen and H. Wu, An adaptive finite element method with perfectly
matched absorbing layers for the wave scattering by periodic structures, SIAM J.
Numer. Anal., 41 (2003), 799-826.

\bibitem{cl-sjna05}
Z. Chen and X. Liu, An adptive perfectly matched layer technique for
time-harmonic scattering problems, SIAM J. Numer. Anal., 43 (2005), 645--671.

\bibitem{cxz-mc}
Z. Chen, X. Xiang, and X. Zhang, Convergence of the PML method for elastic wave
scattering problems, Math. Comp., to appear.

\bibitem{cm-sjsc98}
F. Collino and P. Monk, The perfectly matched layer in curvilinear coordinates,
SIAM J. Sci. Comput., 19 (1998), 2061--1090.

\bibitem{ct-g01}
F. Collino and C. Tsogka, Application of the perfectly matched absorbing layer
model to the linear elastodynamic problem in anisotropic heterogeneous media,
Geophysics, 66 (2001), 294--307.

\bibitem{cw-motl94}
W. Chew and W. Weedon, A 3D perfectly matched medium for modified Maxwell's
equations with stretched coordinates, Microwave Opt. Techno. Lett., 13 (1994),
599--604.

\bibitem{df-jmaa92}
D. Dobson and A. Friedman, The time-harmonic Maxwell equations in a doubly
periodic structure, J. Math. Anal. Appl., 166 (1992), 507--528.

\bibitem{eh-mmas10}
J. Elschner and G. Hu, Variational approach to scattering of plane
elastic waves by diffraction gratings, Math. Meth. Appl. Sci., 33 (2010),
1924--1941.

\bibitem{eh-mmmas12}
J. Elschner and G. Hu, Scattering of plane elastic waves by three-dimensional
diffraction gratings, Math. Models Methods Appl. Sci., 22 (2012), 1150019.

\bibitem{hsb-jasa96}
F. D. Hastings, J. B. Schneider, and S. L. Broschat, Application of the perfectly
matched layer (PML) absorbing boundary condition to elastic wave propagation, J.
Acoust. Soc. Am., 100 (1996), 3061--3069.

\bibitem{h-jnm12}
F. Hecht, New development in FreeFem++, J. Numer. Math., 20 (2012), 251--265.

\bibitem{hsz-sjma03}
T. Hohage, F. Schmidt, and L. Zschiedrich, Solving time-harmonic scattering
problems based on the pole condition. II: Convergence of the PML method, SIAM
J. Math. Anal., 35 (2003), 547--560.

\bibitem{ls-c98}
M. Lassas and E. Somersalo, On the existence and convergence of the solution of
PML equations, Computing, 60 (1998), 229--241.

\bibitem{lwz-ip15}
P. Li, Y. Wang, and Y. Zhao, Inverse elastic surface scattering with
near-field data, Inverse Problems, 31 (2015), 035009.

\bibitem{sch-74}
A. H. Schatz, An observation concerning Ritz-Galerkin methods with
indefinite bilinear forms, Math. Comp., 28 (1974), 959-962.

\bibitem{ty-anm98}
E. Turkel and A. Yefet, Absorbing PML boundary layers for wave-like equations,
Appl. Numer. Math., 27 (1998), 533--557.

\bibitem{wbllw-sjna15}
Z. Wang, G. Bao, J. Li, P. Li, and H. Wu, An adaptive finite element method for
the diffraction grating problem with transparent boundary condition, SIAM J.
Numer. Anal., 53 (2015), 1585--1607.

\end{thebibliography}
\end{document}